\title[Reflecting Perfection for Finite Dimensional DGAs]{Reflecting Perfection for Finite Dimensional Differential Graded Algebras}
\author{Isambard Goodbody}
\date{\today}
\DeclareMathOperator{\rad}{rad}
\DeclareMathOperator{\Hom}{Hom}
\DeclareMathOperator{\Tor}{Tor}
\DeclareMathOperator{\RHom}{RHom}
\DeclareMathOperator{\End}{End}
\DeclareMathOperator{\thick}{thick}
\DeclareMathOperator{\Mod}{Mod}
\DeclareMathOperator{\proj}{proj}
\DeclareMathOperator{\fdmod}{mod}
\DeclareMathOperator{\perf}{perf}
\DeclareMathOperator{\Aus}{Aus}
\DeclareMathOperator{\Ext}{Ext}
\DeclareMathOperator{\lhf}{lhf}
\DeclareMathOperator{\rhf}{rhf}
\DeclareMathOperator{\f}{sf}
\DeclareMathOperator{\cf}{cf}
\begin{document}

\usetikzlibrary{matrix}
\usetikzlibrary{shapes}

\theoremstyle{plain}
\newtheorem{prop}{Proposition}[section]
\newtheorem{lemma}[prop]{Lemma}
\newtheorem{theorem}[prop]{Theorem}
\newtheorem{cor}[prop]{Corollary}
\newtheorem{conj}[prop]{Conjecture}

\theoremstyle{definition}
\newtheorem{defn}[prop]{Definition}
\newtheorem{ass}[prop]{Assumption}
\newtheorem{cons}[prop]{Construction}
\newtheorem{ex}[prop]{Example}
\newtheorem{remark}[prop]{Remark}
\newtheorem*{ack}{Acknowledgements}
\maketitle

\tikzset{
    vert/.style={anchor=south, rotate=90, inner sep=.5mm}
}

\newcommand{\rightarrowdbl}{\rightarrow\mathrel{\mkern-14mu}\rightarrow}

\newcommand{\xrightarrowdbl}[2][]{%
  \xrightarrow[#1]{#2}\mathrel{\mkern-14mu}\rightarrow
}

\begin{abstract}
We generalise two facts about finite dimensional algebras to finite dimensional differential graded algebras. The first is the Nakayama Lemma and the second is that the simples can detect finite projective dimension. We prove two dual versions which relate to Gorenstein differential graded algebras and Koszul duality respectively. As an application, we prove a corepresentability result for finite dimensional differential graded algebras.
\end{abstract}

\tableofcontents

\section{Introduction}

If $\mathcal{T}$ is a triangulated category generated by an object $g$, then the graded endomorphism ring $\End^\ast_\mathcal{T}(g)$ of $g$ does not in general determine $\mathcal{T}$. However if $\mathcal{T}$ has a suitable enhancement, then $\End^\ast_{\mathcal{T}}(g)$ admits a higher structure which determines $\mathcal{T}$ entirely. In the case of differential graded enhancements, the higher algebraic gadget is a differential graded algebra (DGA) whose cohomology ring is $\End^\ast_{\mathcal{T}}(g)$. Alongside this there is another discrete ring one can associate to a DGA: simply forget the differential and grading. We will use this discrete ring to study the higher structure of DGAs.
\\

A motivating example is provided by algebraic geometry where $\mathcal{T}$ is the derived category of a quasi-compact quasi-separated scheme. By Corollary 3.1.8 in \cite{VB02}, $\mathcal{T}$ is equivalent to the derived category $\mathcal{D}(A)$ of a DGA $A$. For this reason, the derived category of a DGA can be  thought of as the derived category of a scheme (which may or may not exist). 
\\

To combat their wild behaviour, we place some assumptions on our DGAs. The strongest assumption would be that the DGA is concentrated in degree zero. In this case there is no higher structure, and we are in the world of representation theory. A more generous assumption is that of connectivity (cohomology vanishes in positive degrees). In this case $\mathcal{D}(A)$ admits the standard $t$-structure which, for example, can be used to compute invariants via the heart. The noncommutative version of a proper scheme is a DGA with finite dimensional total cohomology. A finite dimensional DGA (fd DGA) is a DGA whose underlying chain complex is totally finite dimensional. In general, this is a strictly stronger assumption than properness. However, in \cite{RS20} it is shown that every proper connective DGA admits a finite dimensional model. 
\\

In Definition 2.3 of \cite{Orl20}, Orlov constructs the radical $J_-$ of a fd DGA from the radical $J$ of its underlying algebra. The DG module $A/J_-$ behaves like the semisimple quotient of a finite dimensional algebra and it can be used to construct a derived version of the radical filtration. We will use it to prove analogues of two facts in the theory of finite dimensional algebras. The first is a derived version of the Nakayama Lemma.

\begin{theorem}
Suppose $A$ is a fd DGA such that $A/J$ is $k$-separable. If $M \in \mathcal{D}(A)$ is such that $M \otimes_A^{\mathbb{L}} A/J_- \simeq 0$ then $M \simeq 0$. 
\end{theorem}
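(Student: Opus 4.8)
The plan is to reduce the statement to the fact that the bimodule $A$ can be built out of $A/J_-$ by finitely many triangles, and then apply the exact functor $M \otimes_A^{\mathbb{L}} -$. Since $A$ is a fd DGA its underlying algebra is finite dimensional, so the radical $J$ is nilpotent, say $J^n = 0$. Orlov's construction of $J_-$ upgrades the classical radical filtration $A \supseteq J \supseteq J^2 \supseteq \cdots \supseteq J^n = 0$ to a derived radical filtration of $A$ by DG sub-bimodules
$$A = F_0 \supseteq F_1 \supseteq \cdots \supseteq F_n \simeq 0,$$
and the role of the separability hypothesis is to guarantee that each subquotient $G_i = F_i/F_{i+1}$ is, as a left DG $A$-module (the variable along which we tensor), a finite direct sum of shifts of $A/J_-$. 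First I would record this filtration together with the identification of its subquotients.

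Granting this input, the argument is a dévissage. The functor $M \otimes_A^{\mathbb{L}} -$ is triangulated and commutes with shifts and finite direct sums, so from $M \otimes_A^{\mathbb{L}} A/J_- \simeq 0$ I obtain $M \otimes_A^{\mathbb{L}} G_i \simeq 0$ for every $i$. Each short exact sequence $0 \to F_{i+1} \to F_i \to G_i \to 0$ of DG modules yields a triangle in $\mathcal{D}(A)$, and applying $M \otimes_A^{\mathbb{L}} -$ gives $M \otimes_A^{\mathbb{L}} F_{i+1} \simeq M \otimes_A^{\mathbb{L}} F_i$ since the third term vanishes. Chaining these isomorphisms from the bottom of the filtration upwards yields
$$M \simeq M \otimes_A^{\mathbb{L}} A = M \otimes_A^{\mathbb{L}} F_0 \simeq M \otimes_A^{\mathbb{L}} F_n \simeq 0,$$
which is the claim. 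Equivalently, the filtration witnesses $A \in \thick(A/J_-)$, and since the exact functor $M \otimes_A^{\mathbb{L}} -$ kills $A/J_-$ it kills the whole thick subcategory, hence kills $A$.

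The routine steps are this dévissage and the compatibility of $\otimes_A^{\mathbb{L}}$ with triangles, shifts and direct sums. The main obstacle is the input itself: establishing that the subquotients $G_i$ are genuinely finite sums of shifts of $A/J_-$ at the level of DG modules, rather than merely having the correct underlying graded modules. This is exactly where $k$-separability of $A/J$ enters, since it ensures that $A/J$ is projective as a bimodule, so that the semisimple quotient splits off compatibly with the differential and remains semisimple after base change; this is what permits the graded subquotients $J^i/J^{i+1}$ of the radical filtration --- a priori only semisimple $A/J$-modules --- to be realised as direct summands of finite sums of shifts of $A/J_-$ in $\mathcal{D}(A)$. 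I would therefore spend most of the work verifying this decomposition, after which the Nakayama conclusion is formal.
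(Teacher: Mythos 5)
Your d\'evissage is the right skeleton, and it is close in spirit to the paper's own proof, but your diagnosis of the key step is misdirected, and that is where the gap lies. Running the filtration $A \supseteq J_- \supseteq J_-^2 \supseteq \cdots \supseteq J_-^N = 0$ one-sidedly, you need each subquotient $J_-^i/J_-^{i+1}$ --- a strictly finite dimensional DG module over $A/J_-$ --- to lie in $\thick_0(A/J_-)$ as a left module. This is true, but separability is not what makes it so, and your proposed mechanism (separability gives projectivity of $A/J$ as a bimodule, hence the semisimple quotient ``splits off compatibly with the differential'') will not produce it: the problem is not to split $A/J$ off of $A$, but to decompose DG modules over the DGA $A/J_-$, whose underlying ungraded algebra is \emph{not} semisimple (only that of $A/J_+$ is), and whose differential is the whole difficulty. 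What actually delivers the decomposition is Orlov's Theorem \ref{semisimple}: for \emph{any} fd DGA, $A/J_- \simeq A/J_+$ is derived equivalent to a finite product of division algebras, so its strictly finite dimensional DG modules are finite sums of shifts of summands of $A/J_-$; no separability is required. Pursuing your splitting argument you would end up having to reprove that theorem. Given the paper's preliminaries, though, the gap closes in one line: $A$ is itself a strictly finite dimensional left $A$-module, so $A \in \mathcal{D}_{\f}(A^{op}) = \thick_{A^{op}}(A/J_-)$ by Theorem \ref{radfilt} applied to $A^{op}$, and since the kernel of $M \otimes_A^{\mathbb{L}} -$ is a thick subcategory of $\mathcal{D}(A^{op})$ containing $A/J_-$ by hypothesis, it contains $A$; hence $M \simeq M \otimes_A^{\mathbb{L}} A \simeq 0$.

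It is worth seeing how this differs from the paper's proof, which runs the same filtration but at the bimodule level: separability is used there (Lemma \ref{envelopesemi}, via the appendix) to show that the enveloping DGA $(A/J_-)^e$ is semisimple, so that the subquotients decompose \emph{as bimodules} and $A \in \thick_N((A/J_-)^e)$ inside $\mathcal{D}((A/J_-)^e)$ (Lemma \ref{thickA/J}); applying $M \otimes_A^{\mathbb{L}} -$ then rebuilds $M$, with its $A$-module structure, out of $M \otimes_A^{\mathbb{L}} (A/J_-)^e$. That bimodule statement is what genuinely needs separability, and it is reused for the later reflection results, where conclusions like perfection are not detected on underlying complexes. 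For the Nakayama lemma alone, your one-sided route, completed by citing Theorem \ref{radfilt} rather than a separability splitting, is leaner and in fact proves the statement for every fd DGA with no hypothesis on $A/J$.
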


The second result generalises the fact that the simples of a finite dimensional algebra can detect if a module has finite projective dimension. For a DGA $A$ the corresponding notion is of a perfect $A$-module. These form the subcategory $\mathcal{D}^{\perf}(A) \subseteq \mathcal{D}(A)$.

\begin{theorem} \label{reflperfintro}
Suppose $A$ is a fd DGA such that $A/J$ is $k$-separable. If $M \in \mathcal{D}(A)$ is such that $M \otimes_A^{\mathbb{L}} A/J_-$ has finite dimensional total cohomology then $M \in \mathcal{D}^{\perf}(A)$. 
\end{theorem}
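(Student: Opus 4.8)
The plan is to compare $M$ with a minimal semifree resolution and to recover its free generators from the cohomology of $M \otimes_A^{\mathbb{L}} (A/J_-)$. Write $\bar A := A/J_-$. Since $A$ is a compact generator of $\mathcal{D}(A)$, I would first take a semifree resolution $F \xrightarrow{\sim} M$ and then \emph{minimalise} it, so that on a homogeneous set of free generators $\{e_\sigma\}_{\sigma \in \Sigma}$ the differential satisfies $d(e_\sigma) = \sum_\tau e_\tau a_{\tau\sigma}$ with every $a_{\tau\sigma} \in J_-$. The hypothesis that $A/J$ is $k$-separable --- the same hypothesis underlying Theorem 1 --- is what I expect to use here: separability makes $\bar A$ behave as a genuine semisimple reduction, so that DG projective covers can be formed and idempotents lifted along the nilpotent extension $A \to \bar A$, hence that minimal semifree resolutions exist over a fd DGA. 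Constructing this minimal model is the technical heart of the argument and the step I expect to be the main obstacle; the remainder is a counting argument.

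Granting a minimal resolution, the point is that minimality annihilates the connecting part of the differential after reduction modulo $J_-$. In $F \otimes_A \bar A$ one computes, for $\bar b \in \bar A$,
\[
 d(e_\sigma \otimes \bar b) = \sum_\tau e_\tau \otimes (\overline{a_{\tau\sigma}}\, \bar b) \pm e_\sigma \otimes d_{\bar A}(\bar b) = \pm\, e_\sigma \otimes d_{\bar A}(\bar b),
\]
since $\overline{a_{\tau\sigma}} = 0$ in $\bar A$. Thus no differential links distinct generators and, as DG modules,
\[
 M \otimes_A^{\mathbb{L}} \bar A \simeq F \otimes_A \bar A \cong \bigoplus_{\sigma \in \Sigma} \bar A[n_\sigma], \qquad n_\sigma := |e_\sigma|,
\]
whence $H^{\ast}(M \otimes_A^{\mathbb{L}} \bar A) \cong \bigoplus_{\sigma \in \Sigma} H^{\ast}(\bar A)[n_\sigma]$.

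I would then conclude by counting. As $\bar A$ is the derived semisimple quotient, $H^{\ast}(\bar A) \neq 0$ (already $H^0(\bar A)$ surjects onto the nonzero algebra $A/J$), so each generator $e_\sigma$ contributes at least $\dim_k H^{\ast}(\bar A) \geq 1$ to the total dimension. If $H^{\ast}(M \otimes_A^{\mathbb{L}} \bar A)$ is finite dimensional, the index set $\Sigma$ must be finite, so $F$ is a finitely generated semifree module, obtained from finitely many shifts of $A$ by finitely many extensions. Hence $F \in \thick(A) = \mathcal{D}^{\perf}(A)$, and since $M \simeq F$ we obtain $M \in \mathcal{D}^{\perf}(A)$. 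If one prefers to invoke Theorem 1 directly, the same construction produces a perfect object $F$ together with a map $F \to M$ that is an isomorphism after $- \otimes_A^{\mathbb{L}} \bar A$; its cone reduces to zero modulo $J_-$ and is therefore zero by the Nakayama Lemma, giving $M \simeq F$ again.
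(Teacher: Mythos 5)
Your reduction-mod-$J_-$ computation and the counting step are fine as far as they go (note, incidentally, that if the coefficients $a_{\tau\sigma}$ merely lie in $J$, the relation $d^2=0$ forces them into $J_-$ automatically), but the step you defer --- existence of a minimal semifree resolution for an \emph{arbitrary} object $M \in \mathcal{D}(A)$ over an \emph{arbitrary} fd DGA --- is a genuine gap, and it is not the kind of gap that separability closes. Separability of $A/J$ and nilpotence of $J_-$ take care of idempotent lifting and give a well-behaved semisimple quotient, but the real obstruction is that a fd DGA is in general non-connective: $A$ has components in both positive and negative degrees, and $M$ may be unbounded with infinite-dimensional cohomology in every degree. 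All standard constructions of minimal resolutions (killing cycles inductively, iterated projective covers, or minimalising a bar resolution by homotopy transfer) are inductions on cohomological degree, or on a filtration whose convergence needs connectivity of $A$ or boundedness of $M$; once positive and negative degrees interact there is no top degree at which to start and no reason for the process to converge. The classical statement for finite dimensional algebras (recorded at the start of Section 4, proved exactly by minimal projective resolutions plus ordinary Nakayama) is the connective, degree-zero instance of your strategy; the difficulty of the theorem is precisely that this strategy does not extend, and Example \ref{powerseriesex} (where $\RHom_A(k,k)$ is a power series ring $k[[t]]$ rather than a polynomial ring, because generators of the same degree depend on one another without end) already displays the convergence phenomena. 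Your fallback via the Nakayama Lemma does not help, since it invokes ``the same construction.'' So what you call the main obstacle is in fact the entire content of the theorem.

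For comparison, the paper avoids resolutions altogether: it embeds $\mathcal{D}(A)$ into $\mathcal{D}(\Aus(A))$ by Orlov's Auslander-algebra construction, where full faithfulness plus preservation of coproducts forces the embedding to reflect perfection (Lemma \ref{thomason}); smoothness of $\Aus(A)$ (this is where separability enters) converts perfection over $\Aus(A)$ into mere cohomological finiteness, which is reflected by reduction mod the radical via the \emph{bimodule} radical filtration (Lemma \ref{thickA/J} and Theorem \ref{reflbound}); finally the thick subcategory $\{N \mid M \otimes_A^{\mathbb{L}} N \in \mathcal{D}^b(k)\}$ together with $\mathcal{D}_{\f}(A^{op}) = \thick(A/J_-)$ (Theorem \ref{radfilt}) transports the hypothesis on $A/J_-$ to the object $S$ of Lemma \ref{keylemma}. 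One further point: your justification that $H^\ast(A/J_-) \neq 0$ is incorrect as stated, since $H^0(A/J_-)$ need not surject onto $A/J$. For example, the endomorphism DGA of $\Cone(\id_k)$ is a fd DGA with underlying algebra $M_2(k)$, so $J = J_- = 0$ and $A/J_- = A$ is acyclic, while $A/J = M_2(k) \neq 0$. The correct statement --- that $H^\ast(A/J_-) \neq 0$ whenever $\mathcal{D}(A) \neq 0$ --- is itself an application of the derived Nakayama Lemma rather than of an elementary surjection; a minor repair, but it shows that even the easy-looking steps lean on the machinery you are trying to bypass.
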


In other words, perfection is reflected along the functor 
\[
-\otimes^{\mathbb{L}}_A A/J_- \colon \mathcal{D}(A) \longrightarrow  \mathcal{D}(A/J_-)
\]

By replacing $- \otimes_A^\mathbb{L} A/J_-$ with each of the functors $\RHom_A(A/J_-,-)$ and $\RHom_A(-,A/J_-)$ we derive two dual versions of Theorem \ref{reflperfintro}. The former (Corollary \ref{Gorcondit}) gives a new characterisation of Gorenstein DGAs. The latter (Theorem \ref{Kosequiv}) relates $A$ to its Koszul dual 
\[
A^! := \RHom_A(A/J_-,A/J_-).
\]
We let $\mathcal{D}_{\cf}^{\perf}(A^!) \subseteq \mathcal{D}(A^!)$ consist of perfect modules with finite dimensional total cohomology. Let $\mathcal{D}_{\f}(A)$ be the smallest thick subcategory of $\mathcal{D}(A)$ generated by finite dimensional $A$-modules. 

\begin{theorem}
Let $A$ be a fd DGA such that $A/J$ is $k$-separable. Then there are equivalences
\[
\mathcal{D}_{\f}(A) \simeq \mathcal{D}^{\perf}(A^!) \quad \text{ and } \quad \mathcal{D}^{\perf}(A) \simeq \mathcal{D}_{\cf}^{\perf}(A^!)
\]
\end{theorem}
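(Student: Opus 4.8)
The plan is to produce both equivalences from a single Koszul functor and then read them off from the behaviour of that functor on the two natural generators $S := A/J_-$ and $A$. Set $A^! := \RHom_A(S,S)$ and consider the contravariant derived functor $G := \RHom_A(-,S)\colon \mathcal{D}(A)^{\mathrm{op}} \to \mathcal{D}(A^!)$ coming from the $(A,A^!)$-bimodule structure on $S$. The two basic computations are $G(S) = \RHom_A(S,S) = A^!$ and $G(A) = \RHom_A(A,S) \simeq S$, so $G$ carries the ``simple'' $S$ to the free $A^!$-module and the free module $A$ to the module $S$, on which (by a degree count) the positive part of $A^!$ acts trivially, i.e.\ to the simple $A^!$-module; in particular $G(A)$ is finite dimensional.

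First I would establish the equivalence $\mathcal{D}_{\f}(A) \simeq \mathcal{D}^{\perf}(A^!)$ by derived Morita theory. Working in a dg enhancement, the comparison map $\RHom_A(M,N) \to \RHom_{A^!}(GN,GM)$ is an isomorphism when $M=N=S$ (both sides are $A^!$), and a standard two-variable dévissage over the thick subcategory generated by $S$ then shows $G$ is fully faithful on $\thick(S)$ and restricts to a contravariant equivalence $\thick(S) \simeq \thick(A^!) = \mathcal{D}^{\perf}(A^!)$. It remains to identify $\thick(S)$ with $\mathcal{D}_{\f}(A)$: the inclusion $\thick(S) \subseteq \mathcal{D}_{\f}(A)$ is clear since $S$ is finite dimensional, and for the reverse I would use Orlov's radical $J_-$ to equip any finite dimensional module with a finite filtration whose subquotients are summands of $S$, where $k$-separability of $A/J$ guarantees both that $A/J_-$ is a finite sum of such simples and that the filtration terminates.

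For the second equivalence I would cut out the correct subcategory on each side. Since $G$ is already an equivalence on $\thick(S) = \mathcal{D}_{\f}(A)$ with image $\mathcal{D}^{\perf}(A^!)$, every $M \in \mathcal{D}_{\f}(A)$ has $GM$ perfect automatically, so $\mathcal{D}_{\cf}^{\perf}(A^!)$ corresponds under $G$ to the subcategory $\{\,M \in \mathcal{D}_{\f}(A) : \RHom_A(M,S) \text{ has finite dimensional cohomology}\,\}$. The key point is that this equals $\mathcal{D}^{\perf}(A)$, which is the \emph{reflecting perfection} statement for $\RHom_A(-,S)$. I would deduce it from Theorem \ref{reflperfintro} by $k$-linear duality: writing $D = \RHom_k(-,k)$ and $S = D(DS)$, adjunction gives $\RHom_A(M,S) \simeq D\!\left(DS \otimes^{\mathbb{L}}_A M\right)$, so the left-hand side has finite dimensional cohomology iff $DS \otimes^{\mathbb{L}}_A M$ does; identifying $DS$ with the radical quotient of $A^{\mathrm{op}}$ (again by separability) and applying Theorem \ref{reflperfintro} over $A^{\mathrm{op}}$ shows this holds iff $M$ is perfect. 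Note that $A \in \mathcal{D}_{\f}(A)$ because $A$ is finite dimensional, whence $\mathcal{D}^{\perf}(A) = \thick(A) \subseteq \mathcal{D}_{\f}(A)$, and the biduality isomorphism $A \xrightarrow{\sim} \RHom_{A^!}(S,S)$ that would otherwise be needed here is free: it is simply full faithfulness of $G$ at the object $A \in \thick(S)$.

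Finally, because $G$ is contravariant I would post-compose with $k$-linear duality, which is an exact anti-equivalence on the finite dimensional cohomology subcategories in play, to rewrite both statements covariantly as in the theorem, tracking the passage between $A^!$ and $A^{!\,\mathrm{op}}$ (harmless on simples by separability). The main obstacle I anticipate is the honest bookkeeping of the bimodule and opposite-algebra structures, so that the duality identification of $DS$ with the opposite simple and the compatibility of the two $\RHom$-functors are genuinely correct; the conceptually hard input, the biduality $A \simeq (A^!)^!$, is by contrast automatic once $\thick(S) = \mathcal{D}_{\f}(A)$ is known.
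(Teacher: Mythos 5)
Your overall architecture is the one the paper itself uses: the contravariant functor $G=\RHom_A(-,A/J_-)$, full faithfulness by d\'evissage from the generator $A/J_-$, the identification $\thick(A/J_-)=\mathcal{D}_{\f}(A)$ via the radical filtration (Theorem \ref{radfilt} --- which, incidentally, needs no separability; that hypothesis enters through the Auslander-algebra input behind reflecting perfection), and the identification of the image of $\mathcal{D}^{\perf}(A)$ as the cohomologically finite perfect objects via Ext--Tor duality and the tensor reflecting-perfection theorem; this is precisely Lemma \ref{Koszuldual1}, Theorem \ref{descperfdual2} and Theorem \ref{Koszuldual2}. Two local repairs in that part: your $DS=(A/J_-)^\vee$ is not literally ``the radical quotient of $A^{op}$''; what is true and what the argument needs is that it \emph{generates} $\mathcal{D}_{\f}(A^{op})$, because $(-)^\vee:\mathcal{D}_{\f}(A)\simeq\mathcal{D}_{\f}(A^{op})^{op}$ carries the generator $A/J_-$ to it; and one then applies Theorem \ref{reflperf} over $A$ (to the right $A$-module $M$), after a thick-subcategory argument replaces $DS$ by $A/J_-$ --- not over $A^{op}$.

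The genuine gap is your final step, converting the contravariant equivalences into the covariant ones of the statement by ``post-composing with $k$-linear duality.'' This fails for the first equivalence: $G$ gives $\mathcal{D}_{\f}(A)^{op}\simeq\mathcal{D}^{\perf}((A^!)^{op})$, and $\mathcal{D}^{\perf}((A^!)^{op})$ is \emph{not} one of the ``finite dimensional cohomology subcategories in play,'' because $A^!$ is in general not proper. Concretely, in Example \ref{powerseriesex} ($A=k[x]/x^2$ with $\lvert x\rvert=1$) one has $A^!=k[[t]]$, and $k[[t]]^\vee$ is not a perfect $k[[t]]$-module; so $(-)^\vee$ does not induce any equivalence $\mathcal{D}^{\perf}((A^!)^{op})\simeq\mathcal{D}^{\perf}(A^!)^{op}$. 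The paper removes the op on perfect complexes with the other duality, $\RHom_{A^!}(-,A^!):\mathcal{D}^{\perf}(A^!)\simeq\mathcal{D}^{\perf}((A^!)^{op})^{op}$ (Proposition \ref{otherduality}), or equivalently proves the covariant equivalence directly by running the same d\'evissage on the covariant functor $\RHom_A(A/J_-,-)$ (Lemma \ref{Koszuldual1}); either of these would repair your argument. For the second equivalence your step is also not free: the assertion that $(-)^\vee$ restricts to $\mathcal{D}^{\perf}_{\cf}((A^!)^{op})\simeq\mathcal{D}^{\perf}_{\cf}(A^!)^{op}$ is exactly the lemma preceding Corollary \ref{almostsmoothdual}, and its proof is not formal: one must check $(A/J_-)^\vee\in\mathcal{D}^{\perf}(A^!)$, which the paper does by writing $(A/J_-)^\vee\simeq\RHom_A(A/J_-,A^\vee)$ and using that the \emph{covariant} Koszul functor carries $A^\vee\in\mathcal{D}_{\f}(A)$ into $\mathcal{D}^{\perf}(A^!)$ --- i.e.\ exactly the covariant equivalence you were hoping to get for free. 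So the covariant statements cannot be bootstrapped from the contravariant one by $k$-duality alone; the covariant d\'evissage (or Proposition \ref{otherduality}) is an unavoidable independent input.
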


As an application we consider a corepresentability problem.

\begin{theorem} \label{corepresentintro}
Let $A$ be fd DGA such that $A/J$ is $k$-separable. Every DG functor $F: \mathcal{D}_{\f}(A) \to \mathcal{D}^b(k)$ is quasi-isomorphic to $\RHom_A(M,-)$ for some $M \in \mathcal{D}^{\perf}(A)$ if and only if $\mathcal{D}_{\cf}(A^!) = \mathcal{D}^{\perf}_{\cf}(A^!)$.
\end{theorem}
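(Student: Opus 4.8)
The plan is to transport the whole question across the two equivalences of the previous theorem and reduce it to an intrinsic finiteness statement about $B := A^!$. Write $S := A/J_-$ and recall that $\mathcal{D}_{\f}(A) = \thick(S)$, so the equivalence $\mathcal{D}_{\f}(A) \simeq \mathcal{D}^{\perf}(B)$ matches the generator $S$ with the free module $B$. Since $A$ is finite dimensional we have $\mathcal{D}^{\perf}(A) \subseteq \mathcal{D}_{\f}(A)$, and the second equivalence identifies this inclusion with $\mathcal{D}^{\perf}_{\cf}(B) \subseteq \mathcal{D}^{\perf}(B)$. As the equivalence comes from a quasi-equivalence of DG enhancements it is fully faithful on Hom-complexes, so for $M \in \mathcal{D}^{\perf}(A)$ and $X \in \mathcal{D}_{\f}(A)$ the functor $\RHom_A(M,-)$ corresponds to $\RHom_B(N,-)$ with $N \in \mathcal{D}^{\perf}_{\cf}(B)$ the image of $M$. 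The statement thus becomes: every DG functor $\mathcal{D}^{\perf}(B) \to \mathcal{D}^b(k)$ is quasi-isomorphic to $\RHom_B(N,-)$ for some $N \in \mathcal{D}^{\perf}_{\cf}(B)$ if and only if $\mathcal{D}_{\cf}(B) = \mathcal{D}^{\perf}_{\cf}(B)$.

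Next I would set up a representability dictionary by derived Morita theory (Toën, Keller). A DG functor $G \colon \mathcal{D}^{\perf}(B) \to \mathcal{D}^b(k)$ is determined by the object $P := G(B)$ together with the residual action of the endomorphism DGA of the generator; since $\mathcal{D}^{\perf}(B) = \thick(B)$ one recovers $G \simeq P \otimes_B^{\mathbb{L}} (-)$, and $G$ lands in $\mathcal{D}^b(k)$ exactly when $P$ has finite dimensional cohomology, i.e. $P \in \mathcal{D}_{\cf}$ on the appropriate side of $B$. This gives a bijection between such functors and $\mathcal{D}_{\cf}$ of $B$. Under this dictionary I would pin down the corepresentable functors: for perfect $N$ one has $\RHom_B(N,-) \simeq N^\vee \otimes_B^{\mathbb{L}} (-)$ with $N^\vee := \RHom_B(N,B)$, so $G$ is corepresentable by a perfect $N$ precisely when $P$ is perfect, and by an $N \in \mathcal{D}^{\perf}_{\cf}(B)$ precisely when $P \in \mathcal{D}^{\perf}_{\cf}$. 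Here one uses that $(-)^\vee$ is an exact duality between perfect modules and, via the identity $\RHom_B(N,B) \simeq \RHom_A(M,S)$ supplied by full faithfulness, that it preserves finite dimensional cohomology along the perfect--cofinite locus.

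With the dictionary in hand the equivalence is formal. If $\mathcal{D}_{\cf}(B) = \mathcal{D}^{\perf}_{\cf}(B)$, then every $P$ classifying a $\mathcal{D}^b(k)$-valued functor is perfect and cofinite, hence every such functor is corepresentable by the corresponding $N \in \mathcal{D}^{\perf}_{\cf}(B) \simeq \mathcal{D}^{\perf}(A)$. Conversely, a cofinite module that is not perfect produces, through the dictionary, a functor that cannot be corepresentable by a perfect object, and so not by an object of $\mathcal{D}^{\perf}(A)$.

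The main obstacle is the representability step and the bookkeeping of variance it entails. Phrasing ``DG functor'' as a quasi-functor and classifying it by a single $B$-module via evaluation on the generator is where the real content sits; one must further check that the side of $B$ on which the classifying module lives is exactly the one occurring in the hypothesis $\mathcal{D}_{\cf}(A^!) = \mathcal{D}^{\perf}_{\cf}(A^!)$. I expect to settle this by invoking the reflexivity of the construction --- applying the preceding theorems to $A^{op}$, whose Koszul dual is $B^{op}$ --- together with the $k$-linear duality $\RHom_k(-,k)$, which exchanges $\mathcal{D}_{\cf}(B)$ with $\mathcal{D}_{\cf}(B^{op})$ and, on the image of $\mathcal{D}^{\perf}(A)$, matches the two perfect--cofinite loci. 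Conceptually the theorem is the assertion that $\mathcal{D}_{\f}(A)$ is saturated, and this reflexivity is the one genuinely nontrivial input beyond the two equivalences already in hand.
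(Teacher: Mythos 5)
Your global architecture --- classify DG functors by a single module over the Koszul dual via Morita theory (Lemma \ref{moritaequiv}), identify the corepresentable ones, and fix the left/right discrepancy with $k$-linear duality (Corollary \ref{almostsmoothdual}) --- is the same as the paper's. But there is a genuine gap at the step carrying all the weight: you assert that the covariant equivalence $\Phi := \RHom_A(A/J_-,-)\colon \mathcal{D}_{\f}(A) \xrightarrow{\sim} \mathcal{D}^{\perf}(A^!)$ identifies the inclusion $\mathcal{D}^{\perf}(A) \subseteq \mathcal{D}_{\f}(A)$ with $\mathcal{D}^{\perf}_{\cf}(A^!) \subseteq \mathcal{D}^{\perf}(A^!)$, i.e.\ that $N = \Phi(M)$ lies in $\mathcal{D}^{\perf}_{\cf}(A^!)$ for perfect $M$ and that every object of $\mathcal{D}^{\perf}_{\cf}(A^!)$ so arises. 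This is false in general: the two equivalences of Theorem \ref{Kosequiv} are given by \emph{different} functors, and the paper warns explicitly (the remarks between Theorem \ref{Koszuldual2} and Theorem \ref{Kosequiv}) that the corresponding covariant square need not commute --- compatibility holds precisely when $A$ is Gorenstein. Concretely, $\Phi(A) = \RHom_A(A/J_-,A)$; for the non-Gorenstein algebra $A = k[x,y]/(x^2,xy,y^2)$ placed in degree $0$ (so $A/J_- = k$), one has $\Ext^i_A(k,A) \neq 0$ for all $i \geq 0$ with growing dimensions, so $\Phi(\mathcal{D}^{\perf}(A))$ is not even contained in $\mathcal{D}_{\cf}(A^!)$, let alone equal to $\mathcal{D}^{\perf}_{\cf}(A^!)$. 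The same defect undercuts your dictionary step: the classifying module of $\RHom_{A^!}(N,-)$ is $\RHom_{A^!}(N,A^!)$, and the duality $\RHom_{A^!}(-,A^!)$ exchanges the two perfect--cofinite loci $\mathcal{D}^{\perf}_{\cf}(A^!)$ and $\mathcal{D}^{\perf}_{\cf}((A^!)^{op})$, again, only when $A$ is Gorenstein.

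What is true, and what the proof actually needs, is the \emph{composite} of your two claims: for $M \in \mathcal{D}^{\perf}(A)$ the classifying module is $\RHom_{A^!}(\Phi M, A^!) \simeq \RHom_A(M,A/J_-)$, and the contravariant functor $\RHom_A(-,A/J_-)$ restricts to an equivalence $\mathcal{D}^{\perf}(A)^{op} \xrightarrow{\sim} \mathcal{D}^{\perf}_{\cf}((A^!)^{op})$. That is Theorem \ref{Koszuldual2}, which rests on Theorem \ref{descperfdual2} (perfection is \emph{reflected} by $\RHom_A(-,A/J_-)$) --- an input your argument never invokes and which cannot be recovered by formal transport along the covariant equivalences. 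So your two incorrect identifications compose to a correct statement, but each individual step fails, and the failure sits exactly where the theorem's content lies. The repair is to run the transport contravariantly from the start, as the paper does in Theorem \ref{corepresent}: identify $\mathcal{D}_{\cf}(\mathcal{D}_{\f}(A)^{op})$ with $\mathcal{D}_{\cf}((A^!)^{op})$ via $\RHom_A(-,A/J_-)$ and Lemma \ref{moritaequiv}, observe that the Yoneda-type embedding sends $M \mapsto \RHom_A(M,A/J_-)$ with image exactly $\mathcal{D}^{\perf}_{\cf}((A^!)^{op})$ by Theorem \ref{Koszuldual2}, and finish with Corollary \ref{almostsmoothdual} exactly as you propose.
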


\begin{ack}
I'd like to thank my supervisor Greg Stevenson as well as Dmitri Orlov for his explanation of Example \ref{sfneqcf}. I am supported by a PhD scholarship from the Carnegie Trust for the Universities of Scotland.
\end{ack}

\section{Preliminaries on Finite Dimensional DGAs} \label{prelims}

Let $k$ be a field and $\mathcal{C}(k)$ the category of chain complexes over $k$. A DGA $A$ is a monoid in $\mathcal{C}(k)$. The category of right $A$-modules will be denoted $\mathcal{C}(A)$. The derived category of $A$ is $\mathcal{D}(A)$ and $\mathcal{D}^{\perf}(A) \subseteq \mathcal{D}(A)$ consists of the perfect $A$-modules. An $A$-module is cohomologically finite if $H^i(M)$ is finite dimensional for all $i$ and $H^i(M) = 0$ for $\lvert i \rvert >> 0$. We let $\mathcal{D}_{\cf}(A) \subseteq \mathcal{D}(A)$ consist of the cohomologically finite modules. So for a finite dimensional algebra $\Lambda$, $\mathcal{D}^{\perf}(\Lambda) \simeq K^b(\proj \Lambda)$ and $\mathcal{D}_{\cf}(\Lambda) \simeq \mathcal{D}^b(\fdmod \Lambda)$. We will write $\mathcal{D}^b(k)$ for $\mathcal{D}^{\perf}(k) = \mathcal{D}_{\cf}(k)$.

The chain complexes of morphisms in the DG categories associated to $\mathcal{C}(A)$ and $\mathcal{D}(A)$ will be denoted $\Hom_A(M,N) \in \mathcal{C}(k)$ and $\RHom_A(M,N)$ $\in$ $\mathcal{C}(k)$ respectively. We let $A^{op}$ be the opposite DGA to $A$ and $A^e := A^{op} \otimes_k A$ the enveloping DGA of $A$.

If $\mathcal{T}$ is a triangulated category and $t \in \mathcal{T}$, $\thick_0(t)$ denotes all finite sums of shifts of summands of $t$. And for $n \in \mathbb{N}$ $\thick_{n+1}(t)$  consists of all summands of objects $s$ for which are there triangles 
\[
s' \longrightarrow s \longrightarrow s'' \longrightarrow
\]
where $s' \in \thick_{n}(t)$ and $s'' \in \thick_0(t)$. Then $\thick(t) = \bigcup_{n \geq 0} \thick_n(t)$ is the smallest thick subcategory containing $t$.

\begin{defn} For a DGA $A$
\begin{enumerate} [label = (\roman*)]
\item $A$ is a finite dimensional DGA (fd DGA) if $A^i$ is finite dimensional for all $i$ and $A^i = 0$ for $\lvert i \rvert > > 0$. 
\item $A$ is proper if $H^i(A)$ is finite dimensional for all $i$ and $H^i(A) = 0$ for $\lvert i \rvert > > 0$.
\item $A$ is smooth if $A \in \mathcal{D}^{\perf}(A^e)$.
\item $A$ is regular if $\mathcal{D}^{\perf}(A) = \thick_N(A)$ for some $N \in \mathbb{N}$.
\end{enumerate}
\end{defn}

By Corollary 2.6 in \cite{Orl20} the class of DGAs which are quasi-isomorphic to fd DGAs is closed under derived equivalence. There is a complete characterisation of this class due to Orlov. Throughout this paper we will use the notation $J = J(A)$ for the Jacobson radical of the underlying algebra of a fd DGA $A$.

\begin{theorem} [Corollary 2.20 \cite{Orl20}] \label{orlovcatres} A proper DGA $A$ is quasi-isomorphic to a fd DGA if and only if there is a proper DGA $E$ and a fully faithful functor
\[
\mathcal{D}^{\perf}(A) \hookrightarrow \mathcal{D}^{\perf}(E)
\]
such that $\mathcal{D}^{\perf}(E)$ has a full exceptional collection. If $A/J$ is $k$-separable then $E$ is smooth and then $A$ is smooth if and only if the embedding is admissible. 
\end{theorem}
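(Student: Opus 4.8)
The plan is to prove the two implications separately and then treat the refinements that hold under $k$-separability; throughout I would pass freely between a proper DGA and its minimal $A_\infty$-model, whose underlying graded space is the finite-dimensional total cohomology.

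For the ``if'' direction, suppose $E$ is proper, $(E_1,\dots,E_n)$ is a full exceptional collection in $\mathcal{D}^{\perf}(E)$, and $\mathcal{D}^{\perf}(A)\hookrightarrow\mathcal{D}^{\perf}(E)$ is fully faithful. Setting $G=\bigoplus_i E_i$, the collection makes $G$ a classical generator, so $E$ is derived equivalent to $\Gamma:=\RHom_E(G,G)$. Exceptionality gives $\Ext^\ast_E(E_i,E_i)=k$ and $\Ext^\ast_E(E_i,E_j)=0$ for $i>j$, so $H^\ast\Gamma$ is upper triangular with $k$ on the diagonal, and properness makes the remaining graded pieces finite dimensional; hence $H^\ast\Gamma$ is a finite-dimensional graded algebra with radical quotient $k^n$. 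Since this quotient is (trivially) separable, $\Gamma$ admits a finite-dimensional model $E'$ with $\mathcal{D}^{\perf}(E)\simeq\mathcal{D}^{\perf}(E')$. Transporting $A$ across this equivalence, the generator of $\mathcal{D}^{\perf}(A)$ becomes a perfect $E'$-module $P$ with $A\simeq\RHom_{E'}(P,P)$. The key observation is that a perfect module over a finite-dimensional DGA is represented by a bounded complex of finitely generated projectives, so its underived endomorphism DGA $\End_{E'}(P)$ is again finite dimensional and models $A$; this supplies the finite-dimensional model for $A$.

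For the ``only if'' direction I would construct, from a finite-dimensional $A$, a directed (triangular) finite-dimensional DGA $E$ together with the embedding. Finite dimensionality makes $J$ nilpotent, so $A/J_-$ splits into finitely many simples $S_1,\dots,S_n$ and $A$ is a finite iterated extension of their shifts along the radical filtration. The idea is to let $E$ be a directed finite-dimensional DGA whose diagonal blocks are the endomorphism division algebras $\Ext^0_A(S_i,S_i)$ and whose off-diagonal part records, in a strictly directed fashion, the higher extensions between the simples, so that the standard objects of $E$ form a full exceptional collection; the comparison functor $\mathcal{D}^{\perf}(A)\hookrightarrow\mathcal{D}^{\perf}(E)$ is induced by a morphism relating $A$ and $E$. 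The main obstacle is precisely this construction: arranging the directed DGA so that (i) its exceptional collection is genuinely full, and (ii) the comparison functor is fully faithful. Controlling the relevant $\RHom$ complexes is where $k$-separability of $A/J$ enters, both identifying the diagonal blocks as separable division algebras and letting one strictify the $A_\infty$ extension data into an honest finite-dimensional DGA. In general this inclusion is only thick, not admissible --- admissibility being exactly the extra feature isolated in the final clause.

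For the refinements, assume $A/J$ is $k$-separable. Each block $\langle E_i\rangle\simeq\mathcal{D}^{\perf}(\Ext^0_E(E_i,E_i))$ is the perfect category of a separable division algebra, which is a smooth DGA; since smoothness is inherited along the semiorthogonal decomposition determined by the exceptional collection, $E$ is smooth. For the final equivalence I would argue as follows. If the embedding is admissible then $\mathcal{D}^{\perf}(A)$ is an admissible subcategory of a smooth proper category, and admissible subcategories inherit smoothness, so $A$ is smooth. Conversely, if $A$ is smooth and proper then $\mathcal{D}^{\perf}(A)=\mathcal{D}_{\cf}(A)$ is saturated (it is $\Ext$-finite and carries a Serre functor), and a saturated subcategory of $\mathcal{D}^{\perf}(E)$ is automatically admissible by Bondal--Van den Bergh; hence the embedding is admissible. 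The delicate points here are the equivalence of smooth-and-proper with saturatedness and the Brown-representability argument producing the adjoints, but these are standard once smoothness of $E$ is in hand.
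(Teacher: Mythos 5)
First, a point of calibration: the paper does not prove this statement at all --- it is quoted as Corollary 2.20 of \cite{Orl20} --- so the benchmark is Orlov's argument, the only ingredient of which reproduced in the paper is the Auslander construction (Definition \ref{Auscons} and the theorem following it). Measured against that, your ``if'' direction has a genuine gap: everything rests on the claim that a proper DGA with finite dimensional total cohomology whose radical quotient is separable admits a finite dimensional model, and that claim is false. The paper's own Example \ref{sfneqcf} refutes it: Efimov's gluing of $k[x]/x^6$ and $k[y]/y^3$ along a cohomologically finite bimodule is proper, its cohomology has radical quotient $k\times k$, yet it admits no finite dimensional model; so neither properness, nor finite dimensionality of a minimal $A_\infty$-model, nor separability of the radical quotient can do the work you ask of them. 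What actually makes $\Gamma=\RHom_E(G,G)$ strictifiable is the directedness you record and then never use: because the diagonal of $H^\ast\Gamma$ is the separable semisimple algebra $S=k^n$ and the radical part $R$ is strictly triangular, tensor powers of $R$ over $S$ vanish in length $\geq n$, so the bar and cobar constructions taken \emph{relative to} $S$ are finite dimensional and produce a finite dimensional DG model; equivalently one inducts on gluings, using that on one side of each gluing bimodule sits a separable semisimple algebra, which forces formality. Without some such argument, ``hence $\Gamma$ admits a finite-dimensional model $E'$'' is exactly the hard half of the theorem being assumed. (A smaller issue in the same direction: a perfect module is only a homotopy summand of a finite semifree module, so concluding that an underived endomorphism DGA $\End_{E'}(P)$ is finite dimensional requires strictifying idempotents; this is what Corollary 2.6 of \cite{Orl20} packages.)

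The ``only if'' direction is not proved at all --- you concede this (``the main obstacle is precisely this construction'') --- and the construction you sketch cannot be repaired as stated: a directed algebra whose off-diagonal part ``records the higher extensions between the simples'' is built from the spaces $\Ext^\ast_A(S_i,S_j)$, i.e.\ from the Koszul dual $A^!$, and these are typically infinite dimensional; for $A=k[x]/x^2$ with $\lvert x\rvert=1$ one has $A^!=k[[t]]$ (Example \ref{powerseriesex}), so no finite dimensional $E$ of your proposed shape exists. Orlov's construction is genuinely different and sidesteps this: $E=\Aus(A)=\End_A(A/J_1\oplus\cdots\oplus A)$ is the \emph{underived} endomorphism DGA of a strictly finite dimensional module, hence finite dimensional by inspection; the embedding is $-\otimes_A^{\mathbb{L}}P_N$; and the exceptional collection comes from the radical filtration, with blocks the semisimple algebras of Theorem \ref{semisimple}, not Ext-algebras of simples. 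Your treatment of the final clause (blocks are smooth, smoothness passes along the gluings, admissible $\Rightarrow$ smooth, and smooth plus proper $\Rightarrow$ saturated $\Rightarrow$ admissible by Bondal--Van den Bergh) is the standard and essentially correct argument, but it sits on top of the two constructions you have not established.
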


We note a consequence for Hochschild homology of fd DGAs. Recall the Hochschild homology of a DGA $A$ is $HH_\ast(A) := H^\ast(A \otimes_{A^e}^{\mathbb{L}} A)$.

\begin{cor} \label{Hochshom}
Suppose $A$ is a smooth fd DGA such that $A/J$ is $k$-separable. Then the Hochschild homology $HH_\ast(A)$ of $A$ is concentrated in degree zero. 
\end{cor}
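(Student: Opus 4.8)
The plan is to use Orlov's structural result (Theorem \ref{orlovcatres}) to realise $\mathcal{D}^{\perf}(A)$ as an admissible component of a category with a full exceptional collection, and then to exploit the additivity of Hochschild homology under semiorthogonal decompositions. Since $A$ is a smooth fd DGA with $A/J$ being $k$-separable, Theorem \ref{orlovcatres} supplies a smooth proper DGA $E$ whose perfect derived category carries a full exceptional collection, together with an \emph{admissible} fully faithful embedding $\mathcal{D}^{\perf}(A) \hookrightarrow \mathcal{D}^{\perf}(E)$; admissibility is precisely what the smoothness of $A$ provides in Orlov's theorem.

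I would first compute $HH_\ast(E)$. A full exceptional collection of length $n$ yields a semiorthogonal decomposition $\mathcal{D}^{\perf}(E) = \langle \mathcal{E}_1, \dots, \mathcal{E}_n \rangle$ in which each component $\mathcal{E}_i$ is generated by an exceptional object, hence is equivalent to $\mathcal{D}^{\perf}(k)$. As Hochschild homology is invariant under derived (Morita) equivalence, $HH_\ast(\mathcal{E}_i) \cong HH_\ast(k) = k$ is concentrated in degree zero. By additivity of Hochschild homology over semiorthogonal decompositions with admissible components, $HH_\ast(E) \cong k^n$ is concentrated in degree zero.

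It then remains to transfer this to $A$. The admissible embedding $\mathcal{D}^{\perf}(A) \hookrightarrow \mathcal{D}^{\perf}(E)$ exhibits $\mathcal{D}^{\perf}(A)$ as one component of a semiorthogonal decomposition of $\mathcal{D}^{\perf}(E)$, so additivity makes $HH_\ast(A)$ a direct summand of $HH_\ast(E)$. Since the latter is concentrated in degree zero, so is the former, giving the claim once we identify $HH_\ast(A) = H^\ast(A \otimes^{\mathbb{L}}_{A^e} A)$ with the Hochschild homology of the DG category $\mathcal{D}^{\perf}(A)$ via Morita invariance. The main obstacle I anticipate is the careful use of additivity of Hochschild homology under semiorthogonal decompositions: one must pass to the DG enhancements rather than the bare triangulated categories, check that the components are genuinely admissible, and match the categorical invariant with $H^\ast(A \otimes^{\mathbb{L}}_{A^e} A)$. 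The other ingredients — that exceptional objects contribute $k$ in degree zero and that $HH_\ast$ is a derived invariant — are standard.
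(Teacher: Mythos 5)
Your overall strategy matches the paper's: use Theorem \ref{orlovcatres} to get an admissible embedding $\mathcal{D}^{\perf}(A) \hookrightarrow \mathcal{D}^{\perf}(E)$, invoke additivity of Hochschild homology so that $HH_\ast(A)$ is a summand of $HH_\ast(E)$, and then compute $HH_\ast(E)$ from the full exceptional collection. However, there is a genuine gap in the last step: you claim that each component $\mathcal{E}_i$ of the semiorthogonal decomposition is ``generated by an exceptional object, hence is equivalent to $\mathcal{D}^{\perf}(k)$.'' Over an arbitrary field this is false for the exceptional collection produced by Orlov's theorem: the objects $K_i$ there have endomorphism rings $\End_{\mathcal{E}}(K_i)$ which are finite dimensional division algebras over $k$ (they come from the Artin--Wedderburn decomposition of the semisimple quotients, cf.\ Theorem \ref{semisimple}), not copies of $k$. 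So each component is $\mathcal{D}^{\perf}(D_i)$ for a division algebra $D_i$, and $HH_\ast(E) \cong HH_\ast\bigl(\prod_i D_i\bigr)$. The Hochschild homology of a division algebra over $k$ need \emph{not} be concentrated in degree zero: for a purely inseparable extension such as $L = k(t^{1/p})$ over $k = \mathbb{F}_p(t)$ one has $HH_1(L) \cong \Omega^1_{L/k} \neq 0$. So your argument, as written, only works when $k$ is algebraically closed (or when one separately knows the $D_i$ are separable).

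This is exactly where the hypothesis that $A/J$ is $k$-separable must be used a second time --- you use it only to get smoothness of $E$ and admissibility, but it is also needed to control the endomorphism algebras. The paper closes the gap as follows: by Theorem 2.18 5) of \cite{Orl20}, since $A/J$ is $k$-separable the algebra $S = \prod_{i=1}^n \End_{\mathcal{E}}(K_i)$ is $k$-separable; hence $S^e$ is semisimple (of global dimension $0$), and therefore $HH_\ast(S) = H^\ast(S \otimes^{\mathbb{L}}_{S^e} S)$ is concentrated in degree zero. With that substitution for your computation of $HH_\ast(E)$, the rest of your argument (additivity, the summand claim, and Morita invariance identifying the categorical invariant with $H^\ast(A \otimes^{\mathbb{L}}_{A^e} A)$) goes through and recovers the paper's proof.
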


\begin{proof}
In this case $\mathcal{D}^{\perf}(A)$ is admissible in $\mathcal{D}^{\perf}(E)$ where $E$ is as in Theorem \ref{orlovcatres}. Hochschild homology is an additive invariant and so $HH_\ast(A)$ is a summand of $HH_\ast(E)$ (see for example \cite{Kuz09}). Since $E$ admits a full exceptional collection $HH_\ast(E) \cong HH_\ast(S)$ where $S$ is a separable $k$-algebra. In the notation of \cite{Orl20}, $S = \prod_{i=1}^n \End_{\mathcal{E}}(K_i)$. The separability follows from Theorem 2.18 5) in loc.\ cit.\ since $A/J$ is $k$-separable. Since $S$ is $k$-separable so is $S^e$ and in particular it has global dimension 0. Hence $HH_\ast(S)$ is concentrated in degree zero.  
\end{proof}

\begin{remark} Using the Hochschild-Konstant-Rosenberg Theorem of \cite{Ma01} and \cite{Ma08} (and see also \cite{Kuz09} Theorem 8.3) one can produce many examples of smooth and proper schemes whose Hochschild homology is not concentrated in degree zero. Examples include smooth projective curves of genus $g > 0$, $K_3$ surfaces, many Fano 3-folds and so on. Hence none of these geometric objects can be derived equivalent to a fd DGA. This provides plenty of proper DGAs which do not admit finite dimensional models.
\end{remark}

Corollary \ref{Hochshom} was already proved for proper connective DGAs in \cite{shk07}. We note that smoothness simplifies for fd DGAs.

\begin{cor} Suppose $A$ is a fd DGA such that $A/J$ is $k$-separable. Then $A$ is smooth if and only if $\mathcal{D}^{\perf}(A) = \mathcal{D}_{\cf}(A)$. 
\end{cor}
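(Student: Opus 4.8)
The plan is to prove the two implications separately, using that a fd DGA is automatically proper together with Orlov's structure theorem (Theorem \ref{orlovcatres}) for the harder direction.

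For the forward direction, suppose $A$ is smooth. Since $A$ is a fd DGA it is proper, and properness gives $\mathcal{D}^{\perf}(A) \subseteq \mathcal{D}_{\cf}(A)$: as $H^\ast(A)$ is finite dimensional and $H^\ast$ carries the triangles, shifts and summands that build a perfect module to long exact sequences, shifts and summands, every perfect module has finite dimensional total cohomology. For the reverse inclusion I would use smoothness directly. Given $M \in \mathcal{D}_{\cf}(A)$, write $M \simeq M \otimes_A^{\mathbb{L}} A$ and apply $M \otimes_A^{\mathbb{L}}(-)$ to a perfect resolution of $A$ over $A^e$; this exhibits $M$ as finitely built from $M \otimes_k A$. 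Since $M \in \mathcal{D}^{\perf}(k)$ the module $M \otimes_k A$ is a finite sum of shifts of $A$, hence perfect, so $M$ is perfect. Thus $\mathcal{D}^{\perf}(A) = \mathcal{D}_{\cf}(A)$.

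For the converse, assume $\mathcal{D}^{\perf}(A) = \mathcal{D}_{\cf}(A)$. By Theorem \ref{orlovcatres} there is a smooth proper DGA $E$ with a full exceptional collection and a fully faithful functor $\iota \colon \mathcal{D}^{\perf}(A) \hookrightarrow \mathcal{D}^{\perf}(E)$, and $A$ is smooth precisely when $\iota$ is admissible. The functor $\iota$ is induced by the bimodule $X = \iota(A)$, which is perfect over $E$, so $\iota$ extends to a coproduct preserving functor $\mathcal{D}(A) \to \mathcal{D}(E)$ whose right adjoint is $\RHom_E(X,-)$. Since $E$ is smooth and proper we have $\mathcal{D}^{\perf}(E) = \mathcal{D}_{\cf}(E)$, and for $N$ in this category the perfectness of $X$ over $E$ forces $\RHom_E(X,N)$ to have finite dimensional total cohomology; hence the right adjoint sends $\mathcal{D}^{\perf}(E)$ into $\mathcal{D}_{\cf}(A)$. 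By hypothesis $\mathcal{D}_{\cf}(A) = \mathcal{D}^{\perf}(A)$, so the right adjoint restricts to the perfect subcategories and $\iota$ is right admissible. Finally, $E$ smooth and proper endows $\mathcal{D}^{\perf}(E)$ with a Serre functor, and a right admissible subcategory of a category with a Serre functor is automatically admissible (Bondal--Kapranov). Therefore $\iota$ is admissible and $A$ is smooth.

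The main obstacle is the converse, and within it the identification of the right adjoint to the Orlov embedding as $\RHom_E(X,-)$ together with the cohomological finiteness computation showing it lands in $\mathcal{D}_{\cf}(A)$; once that is in place the equality hypothesis and the Serre functor formalism close the argument. One should also take care that the embedding of Theorem \ref{orlovcatres} is genuinely given by an $A$-$E$-bimodule perfect over $E$, so that the adjoint functor theorem and the tensor--hom adjunction apply.
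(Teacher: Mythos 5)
Your forward direction is correct (the standard argument: smoothness gives $A \in \thick_{\mathcal{D}(A^e)}(A^e)$, apply $M\otimes_A^{\mathbb{L}}-$, and use $M \otimes_k A \simeq \bigoplus_i H^i(M)\otimes_k A[-i]$), and the first half of your converse is also sound: the Orlov embedding is indeed the bimodule functor $-\otimes_A^{\mathbb{L}}P_N$ with $P_N$ a summand of $E$ as a right $E$-module, its right adjoint $\RHom_E(P_N,-)$ carries $\mathcal{D}_{\cf}(E)$ into $\mathcal{D}_{\cf}(A)$, and with the hypothesis $\mathcal{D}_{\cf}(A)=\mathcal{D}^{\perf}(A)$ this makes $\mathcal{D}^{\perf}(A)$ right admissible in $\mathcal{D}^{\perf}(E)$. (The paper's printed proof is only a citation to Proposition 1.11 of \cite{Orl23}, but its source contains a commented-out proof that runs exactly along these lines.) The gap is your final step. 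The statement ``a right admissible subcategory of a category with a Serre functor is automatically admissible'' is not what Bondal--Kapranov prove, and one cannot cite it in this form. Their result requires the \emph{subcategory} to admit a Serre functor as well: for right admissible $\mathcal{A}\subseteq\mathcal{T}$ with inclusion $\iota$ and right adjoint $\rho$, the composite $\rho\circ S_{\mathcal{T}}\circ\iota$ is only a \emph{right} Serre functor on $\mathcal{A}$ (fully faithful, satisfying the duality), and left admissibility of $\mathcal{A}$ is equivalent to this functor being essentially surjective; the ambient Serre functor alone gives no reason for that. Worse, in the present situation a Serre functor on $\mathcal{D}^{\perf}(A)$ is, by the paper's own proposition in Section \ref{GorandDual}, equivalent to $A$ being Gorenstein --- which is precisely the kind of thing you do not know at this point of the argument.

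The step can be repaired in two ways, and this is where the real content lies. Either (i) replace ``has a Serre functor'' by ``is saturated'': $\mathcal{D}^{\perf}(E)$ is smooth and proper, hence saturated by Bondal--Van den Bergh; a right admissible subcategory of a saturated Ext-finite category is itself saturated (extend a finite-type cohomological functor along $\rho$, represent it, and project back), and saturated subcategories of Ext-finite categories are admissible on both sides. Then Theorem \ref{orlovcatres} applies as you intended. Or (ii) avoid admissibility altogether, which is what the author's alternative argument does: right admissibility already yields the semiorthogonal decomposition $\langle \mathcal{D}^{\perf}(A)^{\perp}, \mathcal{D}^{\perf}(A)\rangle$ of $\mathcal{D}^{\perf}(E)$, and by a theorem of Lunts (smoothness of semiorthogonal components of smooth and proper categories) the component $\mathcal{D}^{\perf}(A)$ is smooth, so $A$ is smooth directly, without ever invoking the two-sided admissibility clause of Theorem \ref{orlovcatres}. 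Route (ii) is both shorter and avoids the delicate right-versus-left admissibility issue entirely; as written, your proof is incomplete at exactly this point.
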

\begin{proof}
This is a special case of Proposition 1.11 in \cite{Orl23}. 
\end{proof}

\begin{ex} \label{sfneqcf}
As mentioned in Section 3.3 of \cite{Orl23}, there are modules over fd DGAs which have finite dimensional total cohomology but are not quasi-isomorphic to a module whose underlying chain complex is finite dimensional. An example is given by $k[x,y]/(x^6,y^3)$ with $\lvert x \rvert = 0, \lvert y \rvert =1$ and zero differential. Theorem 5.4 of \cite{Efi18} gives an example of a $k[x]/x^6$-$k[y]/y^3$ bimodule $V$ with finite dimensional cohomology. The theorem in loc.\ cit.\ implies that the associated gluing is a DGA which does not satisfy Theorem \ref{orlovcatres}. It follows that the gluing, and so also $V$, doesn't admit a finite dimensional model. 
\end{ex}

\begin{defn} A module $M$ over a fd DGA $A$ is strictly finite dimensional if $M^i$ is finite dimensional for all $i$ and $M^i = 0$ for $\lvert i \rvert >> 0$. Let $\mathcal{D}_{\f}(A)$ be the smallest thick subcategory of $\mathcal{D}_{\cf}(A)$ containing all strictly finite dimensional modules.
\end{defn}

The following is Proposition 3.9 in \cite{Orl23}.

\begin{prop} \label{sf=cf}
If $A$ is a fd DGA concentrated in non-positive degrees, then $\mathcal{D}_{\f}(A) = \mathcal{D}_{\cf}(A)$.
\end{prop}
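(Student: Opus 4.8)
The plan is to exploit the standard $t$-structure available on $\mathcal{D}(A)$ under the non-positivity hypothesis, and to realise an arbitrary cohomologically finite module as a finite iterated extension of its cohomology objects, each of which will turn out to be strictly finite dimensional.

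Since $A$ is concentrated in non-positive degrees it is connective, so $\mathcal{D}(A)$ carries the standard $t$-structure with $\mathcal{D}^{\leq 0}(A) = \{M : H^i(M) = 0 \text{ for } i > 0\}$ and $\mathcal{D}^{\geq 0}(A) = \{M : H^i(M) = 0 \text{ for } i < 0\}$, and with heart equivalent to $\Mod H^0(A)$. Here $H^0(A) = A^0/\im(d^{-1})$ is an honest finite dimensional algebra (a subquotient of $A^0$), and the equivalence identifies an $H^0(A)$-module with the DG $A$-module obtained by restricting along the canonical DGA map $A \to H^0(A)$. I would first record this map, which exists because $\tau^{\geq 0} A \simeq H^0(A)$ for connective $A$, and note that along it a module concentrated in a single degree remains concentrated in that degree with zero differential.

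The inclusion $\mathcal{D}_{\f}(A) \subseteq \mathcal{D}_{\cf}(A)$ holds by definition, so the task is the reverse. Given $M \in \mathcal{D}_{\cf}(A)$, its cohomology $H^i(M)$ is finite dimensional over $k$ and vanishes outside a bounded range. The crucial observation is that each cohomology object $H^n(M)$, viewed through the heart as a DG $A$-module via restriction, is strictly finite dimensional: its underlying complex is the finite dimensional space $H^n(M)$ placed in degree $n$. Hence $H^n(M)[-n] \in \mathcal{D}_{\f}(A)$ for every $n$. Assembling $M$ is then routine: the truncation triangles
\[
\tau^{\leq n-1} M \longrightarrow \tau^{\leq n} M \longrightarrow H^n(M)[-n] \longrightarrow
\]
exhibit $\tau^{\leq n} M$ as an extension of a strictly finite dimensional module by $\tau^{\leq n-1} M$, and since the cohomology is bounded only finitely many such triangles are needed to pass from $0$ to $M$. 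Thickness of $\mathcal{D}_{\f}(A)$ together with induction on $n$ then give $M \in \mathcal{D}_{\f}(A)$.

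The step I expect to be the real content rather than bookkeeping is the identification of the cohomology objects with strictly finite dimensional DG modules. This is precisely where non-positivity is indispensable: it is what produces the $t$-structure in the first place and what lets us realise the heart as modules over the discrete algebra $H^0(A)$ by restriction, so that finite dimensionality over $k$ upgrades to strict finite dimensionality as a chain complex. Without connectivity neither the truncations nor this clean description of the heart is available, and one would have to argue more carefully about representatives.
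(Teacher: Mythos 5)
Your proof is correct, but a comparison with ``the paper's proof'' is somewhat moot: the paper does not prove this statement at all, it simply cites Proposition 3.9 of \cite{Orl23}. Your argument is the standard one (and is essentially how the cited result is established): use the standard $t$-structure on $\mathcal{D}(A)$, identify the heart with $\Mod H^0(A)$ via restriction along $A \to H^0(A)$, observe that a finite dimensional $H^0(A)$-module placed in a single degree is a strictly finite dimensional DG $A$-module, and then climb the finitely many truncation triangles using thickness of $\mathcal{D}_{\f}(A)$. Two small points worth tightening. First, your justification of the map $A \to H^0(A)$ via ``$\tau^{\geq 0}A \simeq H^0(A)$'' is slightly loose: what you actually use is that $A$ is \emph{strictly} concentrated in non-positive degrees, so that every element of $A^0$ is a cocycle and the projection $A^0 \twoheadrightarrow A^0/\im(d^{-1})$ extends to a genuine DGA quotient map $A \to H^0(A)$; for a merely cohomologically connective DGA one only has the zigzag $A \leftarrow \tau^{\leq 0}A \to H^0(A)$, which would suffice but is not what you wrote. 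Second, it would be worth one line checking that the smart truncations $\tau^{\leq n}M$ of a DG module are again DG $A$-submodules, which again uses $A^{>0} = 0$ (for $m \in \ker d^n$ and $a \in A^0$ one has $d(ma) = \pm m\,d(a) = 0$ since $d(a) \in A^1 = 0$). With these details recorded, your proof is complete and self-contained, which is arguably an improvement on the citation in the text.
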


One can check smoothness using this category too. 

\begin{prop} Let $A$ be a fd DGA such that $A/J$ is $k$-separable. Then $A$ is smooth if and only if $\mathcal{D}^{\perf}(A) = \mathcal{D}_{\f}(A)$. 
\end{prop}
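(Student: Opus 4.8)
The plan is to reduce everything to the preceding Corollary, which identifies smoothness with the equality $\mathcal{D}^{\perf}(A) = \mathcal{D}_{\cf}(A)$, and to the perfection-detecting Theorem \ref{reflperfintro}. First I would record the chain of inclusions
\[
\mathcal{D}^{\perf}(A) \subseteq \mathcal{D}_{\f}(A) \subseteq \mathcal{D}_{\cf}(A).
\]
The right-hand inclusion is just the definition of $\mathcal{D}_{\f}(A)$ as a thick subcategory of $\mathcal{D}_{\cf}(A)$. For the left-hand one, note that $A$, being a fd DGA, is strictly finite dimensional as a module over itself, so $A \in \mathcal{D}_{\f}(A)$; since $\mathcal{D}_{\f}(A)$ is thick it contains $\thick(A) = \mathcal{D}^{\perf}(A)$. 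The reverse inclusion $\mathcal{D}^{\perf}(A) \subseteq \mathcal{D}_{\cf}(A)$ holds because $A$ is proper, so $A \in \mathcal{D}_{\cf}(A)$ and $\thick(A) \subseteq \mathcal{D}_{\cf}(A)$.

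Given this chain the forward direction is immediate: if $A$ is smooth, the preceding Corollary gives $\mathcal{D}^{\perf}(A) = \mathcal{D}_{\cf}(A)$, the chain collapses, and $\mathcal{D}^{\perf}(A) = \mathcal{D}_{\f}(A)$. For the converse I would argue as follows. The module $A/J_-$ is strictly finite dimensional, hence $A/J_- \in \mathcal{D}_{\f}(A)$, and the hypothesis $\mathcal{D}_{\f}(A) = \mathcal{D}^{\perf}(A)$ makes $A/J_-$ perfect. Now take an arbitrary $M \in \mathcal{D}_{\cf}(A)$. Using that $A/J_-$ is perfect, $M \otimes_A^{\mathbb{L}} A/J_-$ is again cohomologically finite (a perfect module is a finite extension of shifts of summands of $A$, and $M \otimes_A^{\mathbb{L}} A \simeq M$ is cohomologically finite), so Theorem \ref{reflperfintro} yields $M \in \mathcal{D}^{\perf}(A)$. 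As $M$ was arbitrary this gives $\mathcal{D}_{\cf}(A) \subseteq \mathcal{D}^{\perf}(A)$, whence $\mathcal{D}^{\perf}(A) = \mathcal{D}_{\cf}(A)$ and the preceding Corollary delivers smoothness. Thus the whole proposition rests on the single implication: perfection of the simple-like module $A/J_-$ forces every cohomologically finite module to be perfect.

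The delicate point, and the main obstacle, is the claim that $M \otimes_A^{\mathbb{L}} A/J_-$ remains cohomologically finite for every cohomologically finite $M$. Knowing only that $A/J_-$ is perfect as a right $A$-module, which is all that $\mathcal{D}_{\f}(A) = \mathcal{D}^{\perf}(A)$ directly supplies, is not quite enough: this tensor computation needs $A/J_-$ to be perfect as a left $A$-module, equivalently as an $A$-bimodule, so that a finite projective resolution can be tensored against $M$. Passing from one-sided to two-sided perfection is precisely where the $k$-separability of $A/J$ becomes indispensable, since separability makes $A/J$ projective over its enveloping algebra and thereby forces the relevant resolutions to be finite on both sides. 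I would isolate this as a preliminary lemma; granting it, the proposition follows formally from the inclusion chain, the preceding Corollary, and Theorem \ref{reflperfintro}.
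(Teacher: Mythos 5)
Your outline is sound, and it is necessarily a different route from the paper's, since the paper proves this Proposition by citing Corollary 3.12 of \cite{Orl23} rather than by an internal argument. The inclusion chain $\mathcal{D}^{\perf}(A) \subseteq \mathcal{D}_{\f}(A) \subseteq \mathcal{D}_{\cf}(A)$ and the forward direction are correct, and invoking Theorem \ref{reflperf} here is not circular: its proof (via Theorem \ref{radfilt}, Lemma \ref{keylemma} and Theorem \ref{reflbound}) nowhere uses the present Proposition. The genuine gap is exactly the one you flag in your last paragraph, and your proposed repair does not close it. First, your parenthetical ``perfect as a left $A$-module, equivalently as an $A$-bimodule'' asserts a false equivalence: perfection as a bimodule implies perfection on each side (restrict along $A \to A^e$, noting $A^e$ is a finite sum of shifts of $A$ on either side since $A$ is finite dimensional over the field $k$), but not conversely --- $A$ itself is perfect on both sides, yet it is perfect as a bimodule precisely when $A$ is smooth, which is the very property being characterised. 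Second, the mechanism you invoke (separability of $A/J$ makes $A/J$ projective over its enveloping algebra, hence ``resolutions finite on both sides'') is not an argument; what separability actually delivers in this paper (Lemma \ref{envelopesemi}, Lemma \ref{thickA/J}) are generation statements such as $A \in \thick_N((A/J_-)^e)$ in bimodules, and restricting such a statement to one side gives $A \in \thick(A/J_-)$ inside $\mathcal{D}(A^{op})$, i.e.\ that the left module $A/J_-$ generates $A$ --- the opposite direction from the perfection statement ($A$ generates $A/J_-$) that your tensor computation requires. So the ``preliminary lemma'' you defer to is the entire content of the converse, and nothing you wrote proves it.

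The hole can be filled with the paper's own tools, but by a different mechanism: Tor symmetry plus Theorem \ref{reflperf} applied to $A^{op}$. Since the right module $A/J_-$ is perfect by hypothesis, the complex $(A/J_-) \otimes_A^{\mathbb{L}} (A/J_-)$ (right module against left module) lies in $\thick_{\mathcal{D}(k)}\bigl(A \otimes_A^{\mathbb{L}} A/J_-\bigr) = \thick_{\mathcal{D}(k)}(A/J_-) \subseteq \mathcal{D}^b(k)$. This same complex computes the tensor appearing in Theorem \ref{reflperf} for $A^{op}$, evaluated at $A/J_-$ viewed as a right $A^{op}$-module: indeed $J(A^{op})_- = J_-$, the quotient $A^{op}/J(A^{op})_-$ is $A/J_-$ as a right $A$-module, and $N \otimes_{A^{op}}^{\mathbb{L}} M \simeq M \otimes_A^{\mathbb{L}} N$. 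Moreover $A^{op}/J(A^{op}) \cong (A/J)^{op}$ is $k$-separable by Proposition \ref{appprop}, so the theorem does apply to $A^{op}$, and it yields $A/J_- \in \mathcal{D}^{\perf}(A^{op})$ --- the left perfection you need. After that your argument concludes as written: every $M \in \mathcal{D}_{\cf}(A)$ has $M \otimes_A^{\mathbb{L}} A/J_- \in \mathcal{D}^b(k)$, hence is perfect by Theorem \ref{reflperf}, and the unlabelled Corollary on smoothness finishes the proof. In short: correct strategy and an honestly flagged gap, but the gap is the crux, and its closure runs through the reflection theorem on the opposite side rather than through separability directly.
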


\begin{proof}
This follows from Corollary 3.12 in \cite{Orl23}.
\end{proof}

The radical $J$ of a fd DGA is not in general a DG-ideal. The following construction of Orlov will be used many times. 

\begin{defn} \label{jacrad}
If $A$ is a fd DGA, let $J = J(A)$ denote the Jacobson radical of the underlying ungraded algebra of $A$. Then the external and internal DG ideals associated to $J$ are
\[
J_+ \coloneqq J+d(J) \quad \text { and } \quad J_- \coloneqq \{ r \in J \mid d(r) \in J\}.
\] 
\end{defn}

\begin{remark}
Since $J_- \subseteq J \subseteq J_+$ we deduce that $J_-$ is a nilpotent DG ideal and that the underlying algebra of the quotient $A/J_+$ is semisimple. Furthermore the inclusion $J_- \hookrightarrow J_+$ is a quasi-isomorphism.
\end{remark}

\begin{theorem} [Proposition 2.16 \cite{Orl20}] \label{semisimple}
Let $A$ be a fd DGA. Then 
\[
\mathcal{D}(A/J_+) \simeq \mathcal{D}(D_1 \times \cdots \times D_n)
\]
where $D_1,\dots,D_n$ are finite dimensional division algebras. 
\end{theorem}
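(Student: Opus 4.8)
The plan is to upgrade the remark preceding the statement---that the underlying ungraded algebra $\Lambda := B^{\natural}$ of $B := A/J_+$ is semisimple---into a statement about $\mathcal{D}(B)$, by peeling away first the grading and then the differential. Write $\Lambda \cong \prod_{i=1}^{n} M_{m_i}(D_i)$ by Wedderburn--Artin, with the $D_i$ finite dimensional division algebras. The first step is to reduce to the case where $\Lambda$ is simple. The primitive central idempotents $\epsilon_1,\dots,\epsilon_n$ of $\Lambda$ lie in the centre, which is a graded subalgebra, so each $\epsilon_i$ is homogeneous of degree $0$; the Leibniz rule then gives $d\epsilon_i = (d\epsilon_i)\epsilon_i + \epsilon_i(d\epsilon_i) = 2\epsilon_i\,d\epsilon_i$, and left-multiplying by $\epsilon_i$ yields $\epsilon_i\,d\epsilon_i = 0$, hence $d\epsilon_i = 0$ in every characteristic. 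Thus the $\epsilon_i$ are central cocycle idempotents of the \emph{DGA} $B$, giving a product decomposition $B \cong \prod_i B\epsilon_i$ of DG algebras with $(B\epsilon_i)^{\natural} = \Lambda\epsilon_i \cong M_{m_i}(D_i)$ simple. Since $\mathcal{D}\bigl(\prod_i B\epsilon_i\bigr) \simeq \prod_i \mathcal{D}(B\epsilon_i)$, it suffices to treat a single simple block.

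For a block with $\Lambda$ simple the goal becomes to show $\mathcal{D}(B)$ is either $0$ or $\mathcal{D}(D)$ for a division algebra $D$. An ungraded semisimple algebra is also graded-semisimple (its graded radical is contained in its ungraded radical, which vanishes), so graded Wedderburn supplies a complete set of orthogonal \emph{homogeneous} primitive idempotents and identifies $\Lambda$ with a matrix algebra over a graded division algebra $\Delta$, whose degree-zero part $\Delta^0$ is an honest division algebra. I would then realise a graded Morita reduction in the derived category: take the summand $P$ of $B$ cut out by such a primitive idempotent as a compact generator and invoke the Morita theory of DG algebras (Keller) to obtain $\mathcal{D}(B) \simeq \mathcal{D}(\RHom_B(P,P))$, where $\RHom_B(P,P)$ is a DGA $C$ with underlying algebra $\Delta$. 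Since every nonzero homogeneous element of $\Delta$ is a unit, the support of $C$ is a subgroup of $\mathbb{Z}$, and a direct analysis of the degree-$(+1)$ derivation $d$ should show a dichotomy: either $1$ is a coboundary, in which case $H^\ast(C) = 0$ and the block is discarded, or the block survives with $\mathcal{D}(C) \simeq \mathcal{D}(\Delta^0)$, the last equivalence coming from a periodicity (Rees) argument that uses invertibility of a positive-degree element to pass from graded $\Delta$-modules to $\Delta^0$-modules. Collecting the surviving blocks and renaming $D_i := \Delta_i^0$ gives $\mathcal{D}(A/J_+) \simeq \prod_i \mathcal{D}(D_i) \simeq \mathcal{D}(D_1 \times \cdots \times D_n)$.

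The step I expect to be the \textbf{main obstacle} is controlling the differential against the idempotent structure. Although $\Lambda$ is semisimple, $d$ need not respect its Wedderburn decomposition: for a homogeneous idempotent $e$ the element $de$ need not lie in $eB$, so $eB$ is in general \emph{not} a DG submodule and the Morita reduction cannot be carried out in the module category but only derivedly, through $\RHom_B(P,P)$. For the same reason one cannot import formality from the separable case, since the $D_i$ may be inseparable over $k$, so $\Lambda^e$ need not be semisimple and derivations of $\Lambda$ need not be inner; in particular a simple factor can be acyclic and contribute nothing, which is exactly why the $D_i$ record only the surviving blocks. Making the graded Morita equivalence precise in the presence of a nonzero differential, and establishing the dichotomy ``$H^\ast(C)=0$ or $\mathcal{D}(C)\simeq\mathcal{D}(\Delta^0)$'' for the graded division block, is where the genuine work lies.
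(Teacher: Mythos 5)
The paper itself offers no proof to compare against: it imports this statement verbatim from Orlov (Proposition 2.16 of \cite{Orl20}). So I will judge your argument on its own merits, and there it has a genuine gap.

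Your first step is essentially right, though the justification needs repair: lying in a graded subalgebra does not make the $\epsilon_i$ homogeneous. What saves you is that the centre of the semisimple algebra $\Lambda$ is a \emph{reduced} finite dimensional graded algebra (a product of fields), and in a finite dimensional graded algebra any nonzero homogeneous element of nonzero degree is nilpotent (its powers live in distinct degrees); hence the centre is concentrated in degree $0$, and then your Leibniz computation gives $d\epsilon_i=0$ and the block decomposition. The gap is the treatment of a single block. If $e$ is a homogeneous primitive idempotent with $d(e)\neq 0$, the object $P$ at the heart of your argument does not exist: $eB$ is not a DG submodule, and, more to the point, $e$ is not a cocycle, so it defines no idempotent in $H^0(\End_B(B))\cong H^0(B)$ and therefore no direct summand of $B$ in $\mathcal{D}(B)$ either. ``Doing the Morita reduction derivedly'' cannot rescue this, because there is no object to which $\RHom_B(P,P)$ could refer. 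Finally, the dichotomy ``either $1$ is a coboundary or $\mathcal{D}(C)\simeq\mathcal{D}(\Delta^0)$'' is not a verification you may defer: it \emph{is} the theorem at the block level, and your proposal ends by naming it as unfinished work rather than proving it.

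The missing idea is precisely the one your last paragraph argues against: the differential of a block is graded-inner. First note that finite dimensionality forces the graded division algebra $\Delta$ to be concentrated in degree $0$ (a homogeneous unit of nonzero degree has powers in infinitely many degrees), so no Rees/periodicity argument is needed and each block has underlying graded algebra $\End_D(V)$ with $D=\Delta^0$ in degree $0$ and $V$ a finite dimensional graded $D$-module; the grading is induced by the grading of $V$. For such gradings every graded derivation of nonzero degree is graded-inner: your inseparability worry concerns degree-$0$ derivations (those induced from $\mathrm{Der}_k$ of the centre of $D$ preserve degree), and one checks the degree-$1$ statement either by graded Morita invariance of Hochschild cohomology, which identifies $HH^1$ of the block with $HH^1(D,D)$ sitting in internal degree $0$, or by direct Leibniz computations against the matrix units. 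So $d=\mathrm{ad}(\xi)$ with $\xi\in B^1$, and $d^2=0$ forces $\xi^2$ to be central of degree $2$, hence $\xi^2=0$. The block is then literally the endomorphism DGA of the bounded complex $(V,\xi)$ of $D$-vector spaces. Since $D$ is a division ring, $(V,\xi)$ splits as $H^\ast(V,\xi)$ plus a contractible complex; if $H^\ast(V,\xi)=0$ then $\mathrm{id}_V$ is a boundary and the block is acyclic, and otherwise the block is quasi-isomorphic to $\End_D(H^\ast(V,\xi))$ with zero differential, which is graded Morita equivalent to $D$ via an honest DG summand. This yields exactly your dichotomy and completes the proof along the lines you set up.
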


The $D_i$ are exactly those appearing in the Artin-Wedderburn Theorem applied to the underlying algebra of $A/J_+$. 

\begin{defn}[Definition 2.11 \cite{Orl20}] A fd DGA $A$ is semisimple if it is derived equivalent to a finite product of finite dimensional division algebras.
\end{defn}

\begin{theorem}(The Radical Filtration) \label{radfilt}
Suppose $A$ is a fd DGA then, 
\[
\mathcal{D}_{\f}(A) = \thick(A/J_-).
\]
\end{theorem}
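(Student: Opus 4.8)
The plan is to prove the two inclusions separately. For $\thick(A/J_-) \subseteq \mathcal{D}_{\f}(A)$, I would observe that $A/J_-$ is itself a strictly finite dimensional module: since $A$ is a fd DGA and $J_-$ is a DG ideal, each $(A/J_-)^i$ is finite dimensional and vanishes for $\lvert i \rvert \gg 0$. As $\mathcal{D}_{\f}(A)$ is a thick subcategory containing all strictly finite dimensional modules, it contains $A/J_-$ and hence all of $\thick(A/J_-)$.

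For the reverse inclusion it suffices to show that every strictly finite dimensional module $M$ lies in $\thick(A/J_-)$. Since $J_- \subseteq J$ and $J$ is nilpotent, $J_-$ is nilpotent as an ideal; fix $N$ with $J_-^N = 0$ and consider the filtration $M = MJ_-^0 \supseteq MJ_-^1 \supseteq \cdots \supseteq MJ_-^N = 0$. The first computational point I would verify is that each $MJ_-^i$ is a DG submodule. This rests on the observation that $d(J_-) \subseteq J_-$: if $r \in J_-$ then $d(r) \in J$ by definition, and $d(d(r)) = 0 \in J$, so $d(r) \in J_-$; the Leibniz rule then gives $d(MJ_-^i) \subseteq MJ_-^i$. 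Consequently each short exact sequence $0 \to MJ_-^{i+1} \to MJ_-^i \to MJ_-^i/MJ_-^{i+1} \to 0$ is a sequence of DG modules and yields a triangle in $\mathcal{D}(A)$. By downward induction on $i$ starting from $MJ_-^N = 0$, it then suffices to show that each subquotient $N_i := MJ_-^i/MJ_-^{i+1}$ lies in $\thick(A/J_-)$.

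Each $N_i$ is annihilated by $J_-$, hence is naturally a DG module over $A/J_-$, and it is strictly finite dimensional. Here I would invoke that the inclusion $J_- \hookrightarrow J_+$ is a quasi-isomorphism, so $A/J_- \to A/J_+$ is a quasi-isomorphism and $\mathcal{D}(A/J_-) \simeq \mathcal{D}(A/J_+)$. By Theorem \ref{semisimple} the latter is $\mathcal{D}(D_1 \times \cdots \times D_n)$ for division algebras $D_i$; since this algebra has global dimension zero, $\mathcal{D}^{\perf}(A/J_-) = \mathcal{D}_{\cf}(A/J_-)$, and so the cohomologically finite module $N_i$ is perfect over $A/J_-$. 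Finally, restriction along $A \to A/J_-$ is a triangulated functor carrying the free module $A/J_-$ to the generator $A/J_-$, and therefore maps $\mathcal{D}^{\perf}(A/J_-) = \thick_{A/J_-}(A/J_-)$ into $\thick_A(A/J_-)$. Thus each $N_i \in \thick(A/J_-)$, and climbing the filtration via the triangles above places $M = MJ_-^0$ in $\thick(A/J_-)$.

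The main obstacle is not any single deep step but the careful assembly of the pieces: one must confirm that the radical filtration really consists of DG submodules (the point $d(J_-) \subseteq J_-$), and that "perfect over $A/J_-$" transfers to membership in $\thick_A(A/J_-)$ under restriction. The conceptual core is the reduction of the subquotients to the semisimple DGA $A/J_+$, over which every cohomologically finite module is automatically perfect.
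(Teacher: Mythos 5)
Your proof is correct and follows essentially the same route as the paper: both inclusions are handled identically, with the reverse inclusion using the filtration $M \supseteq MJ_- \supseteq \cdots \supseteq MJ_-^N = 0$ and reducing the subquotients to the semisimple case via $\mathcal{D}(A/J_-) \simeq \mathcal{D}(A/J_+)$ and Theorem \ref{semisimple}. The only cosmetic difference is that the paper places the subquotients directly in $\thick_0(A/J_-)$ (sums of shifts of summands), whereas you route through $\mathcal{D}^{\perf}(A/J_-) = \mathcal{D}_{\cf}(A/J_-)$ and restriction; your added verifications (e.g.\ $d(J_-) \subseteq J_-$) are correct and merely make explicit what the paper leaves implicit.
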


\begin{proof}
Let $M$ be a strictly finite dimensional $A$-module. Then there is a filtration of $M$ by strictly finite dimensional $A$-modules
\[
0 = MJ_-^N \subseteq MJ_-^{N-1} \subseteq \dots \subseteq MJ_- \subseteq M
\]
and the factors are $MJ_-^i/MJ_-^{i+1} \in \mathcal{D}(A/J_-) \simeq \mathcal{D}(A/J_+)$. So by Theorem \ref{semisimple},  $MJ_-^i/MJ_-^{i+1} \in \thick_0(A/J_-) \subseteq \mathcal{D}(A)$. The filtration then produces triangles in $\mathcal{D}(A)$ which exhibit $M \in \thick_N(A/J_-)$. Since $\mathcal{D}_{\f}(A)$ is the smallest thick subcategory containing all strictly finite dimensional modules it follows that $\mathcal{D}_{\f}(A) \subseteq \thick(A/J_-)$. The converse follows since $A/J_-$ is strictly finite dimensional. 
\end{proof}

\section{A Derived Nakayama Lemma}

To prove a derived version of the Nakayama Lemma for fd DGAs, we consider a bimodule version of the radical filtration. 

\begin{lemma} \label{envelopesemi}
Let $A$ be a fd DGA such that $A/J$ is $k$-separable. then $(A/J_-)^e$ and $(A/J_+)^e$ are semisimple DGAs. 
\end{lemma}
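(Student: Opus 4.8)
The plan is to reduce the statement to a single computation about underlying algebras, using the following principle, which I would record first: if $B$ is a fd DGA whose underlying ungraded algebra $R$ is semisimple, then $B$ is a semisimple DGA. Indeed, in that case $J(B) = J(R) = 0$, so $J_+ = J + d(J) = 0$ and $B/J_+ = B$; Theorem \ref{semisimple} then gives $\mathcal{D}(B) = \mathcal{D}(B/J_+) \simeq \mathcal{D}(D_1 \times \cdots \times D_n)$, so $B$ is semisimple by definition. Thus it suffices to show that the underlying ungraded algebras of $(A/J_-)^e$ and $(A/J_+)^e$ are semisimple.

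Next I would collapse the two cases into one. Since the inclusion $J_- \hookrightarrow J_+$ is a quasi-isomorphism, the quotient map $A/J_- \to A/J_+$ is a quasi-isomorphism of DGAs. Tensoring over the field $k$ preserves quasi-isomorphisms, so the induced map $(A/J_-)^e \to (A/J_+)^e$ is again a quasi-isomorphism; as being a semisimple DGA is invariant under derived equivalence, it is enough to treat $(A/J_+)^e$. Writing $R$ for the underlying ungraded algebra of $A/J_+$, the underlying algebra of $(A/J_+)^e = (A/J_+)^{op} \otimes_k (A/J_+)$ is $R^{op} \otimes_k R = R^e$, and $(A/J_+)^e$ is a fd DGA because $A/J_+$ is strictly finite dimensional. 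So by the principle above I only need $R^e$ to be semisimple.

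This is where the separability hypothesis enters, and it is the crux of the argument. The algebra $R = A/(J + d(J))$ is a quotient of $A/J$, which is semisimple and $k$-separable by assumption; since quotients of separable algebras are separable (one transports a separability idempotent along the quotient map), $R$ is itself $k$-separable. For a finite dimensional algebra, $k$-separability is equivalent to the enveloping algebra $R^e$ being semisimple, so $R^e$ is semisimple, finishing the proof. The one point demanding care is exactly this last equivalence: without separability the tensor product $R^{op} \otimes_k R$ of the semisimple algebra $R$ with itself can fail to be semisimple (as happens for an inseparable field extension $K/k$ with $K \otimes_k K$), which is precisely why the hypothesis on $A/J$ cannot be omitted.
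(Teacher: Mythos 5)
Your reduction steps are sound and in fact coincide with the paper's own strategy: you pass from $(A/J_-)^e$ to $(A/J_+)^e$ via the quasi-isomorphism $J_- \hookrightarrow J_+$, you note that $A/J_+$ inherits $k$-separability as a quotient of $A/J$, and you invoke Theorem \ref{semisimple} to conclude that a fd DGA with semisimple underlying algebra is a semisimple DGA. The gap is in the one sentence that does the real work: ``the underlying algebra of $(A/J_+)^e = (A/J_+)^{op} \otimes_k (A/J_+)$ is $R^{op} \otimes_k R = R^e$.'' This is not an identity of algebras. The enveloping DGA is formed with the \emph{graded} opposite and the \emph{graded} tensor product, whose multiplications carry Koszul signs: in the graded opposite one has $a \cdot b = (-1)^{\lvert a \rvert \lvert b \rvert} ba$, and in the graded tensor product $(a \otimes b)(a' \otimes b') = (-1)^{\lvert b \rvert \lvert a' \rvert} aa' \otimes bb'$. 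So the underlying ungraded algebra of $(A/J_+)^e$ has the same vector space as $R^e$ but a sign-twisted multiplication, and identifying the two (even up to isomorphism) requires an argument you do not give. The failure of the general principle you invoke is real: in the $\mathbb{Z}/2$-graded world, for $\mathbb{C} = \mathbb{R}[e]/(e^2+1)$ with $e$ odd, the graded tensor square has underlying algebra $\mathbb{H}$, whereas $\mathbb{C} \otimes_{\mathbb{R}} \mathbb{C} \cong \mathbb{C} \times \mathbb{C}$; the sign twist genuinely changes the isomorphism type.

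This is precisely the issue the paper's appendix exists to handle: Lemma \ref{appendixlemma}, via Proposition \ref{appprop}, proves directly that the underlying algebra of the \emph{graded} enveloping algebra of a $k$-separable algebra is semisimple, by exhibiting separability idempotents for the graded opposite and graded tensor product with the correct signs (as the paper puts it, the only difference to the usual proof is the existence of signs). Your argument becomes correct if the false identification is replaced by a citation of, or an argument along the lines of, Lemma \ref{appendixlemma}; the standard ungraded fact that $R$ is separable if and only if $R^e$ is semisimple cannot simply be applied, because $R^e$ is not the algebra you need to control.
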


\begin{proof}
Since $A/J$ is $k$-separable, its quotient $A/J_+$ is too. By Lemma \ref{appendixlemma}, the underlying algebra of the enveloping DGA $(A/J_+)^e$ is semisimple. Hence by Theorem \ref{semisimple}, $(A/J_+)^e$ is a semisimple DGA. And so is $(A/J_-)^e \simeq (A/J_+)^e$.
\end{proof}

\begin{lemma} \label{thickA/J} Let $A$ be a fd DGA such that $A/J$ is $k$-separable. Let $N \in \mathbb{N}$ be such that $J^N = 0$ and $J^{N-1} \neq 0$. If $M \in \mathcal{D}(A)$, then $ M \in \thick_N( M \otimes_A^{\mathbb{L}} (A/J_-)^e) \subseteq \mathcal{D}(A)$. 
\end{lemma}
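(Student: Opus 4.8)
The plan is to lift the statement to the derived category of bimodules $\mathcal{D}(A^e)$ and then transport it along a tensor functor, mirroring the proof of the Radical Filtration (Theorem \ref{radfilt}) but one categorical level up. The claim I would isolate is that the regular bimodule $A$ satisfies $A \in \thick_N((A/J_-)^e)$ inside $\mathcal{D}(A^e)$, where $(A/J_-)^e$ is regarded as an $A^e$-module by restriction along the quotient of DGAs $A^e \to (A/J_-)^e$. Granting this, I would apply the triangulated functor $M \otimes_A^{\mathbb{L}} - \colon \mathcal{D}(A^e) \to \mathcal{D}(A)$, formed by tensoring $M$ against the left $A$-action on a bimodule and retaining the right $A$-action. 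This functor preserves coproducts, hence direct summands, and preserves triangles and shifts, so it carries $\thick_N(X)$ into $\thick_N$ of the image of $X$. Since it sends $A \mapsto M$ and $(A/J_-)^e \mapsto M \otimes_A^{\mathbb{L}} (A/J_-)^e$, the bimodule claim immediately yields $M \in \thick_N(M \otimes_A^{\mathbb{L}} (A/J_-)^e)$.

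To establish the bimodule claim I would use the two-sided $J_-$-adic filtration. As $J_-$ is a nilpotent DG ideal with $J_- \subseteq J$ and $J^N = 0$, we have $J_-^N = 0$, producing a finite filtration of DG sub-bimodules
\[
A = J_-^0 \supseteq J_-^1 \supseteq \cdots \supseteq J_-^N = 0.
\]
Each subquotient $J_-^i/J_-^{i+1}$ is annihilated by $J_-$ on both sides, so it is a DG module over $(A/J_-)^e$, and it is strictly finite dimensional since $A$ is a fd DGA. By Lemma \ref{envelopesemi} the DGA $(A/J_-)^e$ is semisimple, so every strictly finite dimensional $(A/J_-)^e$-module already lies in $\thick_0((A/J_-)^e)$: under the equivalence $\mathcal{D}((A/J_-)^e) \simeq \mathcal{D}(D_1 \times \cdots \times D_n)$ of Theorem \ref{semisimple} every finite object splits as a finite sum of shifts of the simples, which occur precisely as the direct summands of the generator $(A/J_-)^e$. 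Restriction along $A^e \to (A/J_-)^e$ preserves sums, shifts and summands, so each $J_-^i/J_-^{i+1}$ lies in $\thick_0((A/J_-)^e) \subseteq \mathcal{D}(A^e)$. The short exact sequences of the filtration yield triangles in $\mathcal{D}(A^e)$, and assembling them from the bottom exhibits $A \in \thick_{N-1}((A/J_-)^e) \subseteq \thick_N((A/J_-)^e)$.

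The step I expect to be the main obstacle is the assertion that over the semisimple DGA $(A/J_-)^e$ there are genuinely \emph{no} extensions left to resolve, i.e.\ that finite dimensionality forces membership in $\thick_0$ rather than merely in $\thick$; this is where the full strength of Lemma \ref{envelopesemi} and Theorem \ref{semisimple} is needed, and one must verify that each simple object of $\mathcal{D}((A/J_-)^e)$ really appears as a shift of a direct summand of $(A/J_-)^e$, so that $\thick_0$ of the generator captures every finite object. A secondary point requiring care is the bookkeeping of module structures: one must confirm that $(A/J_-)^e$ with its restricted $A^e$-action is the correct object, that the filtration subquotients are honestly $(A/J_-)^e$-modules, and that the functor $M \otimes_A^{\mathbb{L}} -$ is set up so that $A \mapsto M$ and the $\thick$-degree is preserved.
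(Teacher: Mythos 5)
Your proposal is correct and follows essentially the same route as the paper's proof: the two-sided $J_-$-adic filtration of $A$ as a bimodule, semisimplicity of $(A/J_-)^e$ (Lemma \ref{envelopesemi} together with Theorem \ref{semisimple}) to place each factor $(J_-)^i/(J_-)^{i+1}$ in $\thick_0((A/J_-)^e)$, and then transport along the functor $M \otimes_A^{\mathbb{L}} -$. Your version is merely more explicit about working in $\mathcal{D}(A^e)$ and restricting along $A^e \to (A/J_-)^e$ (and notes the slightly sharper bound $\thick_{N-1}$), but the decomposition, key lemma, and transport step coincide with the paper's.
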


\begin{proof}
Note that $J_-$ is a two-sided ideal and the filtration 
\[
0 = (J_-)^n \subseteq (J_-)^{n-1} \subseteq \dots \subseteq J_- \subseteq A
\]
is of $A$ bimodules. Each of the factors $(J_-)^{i}/(J_-)^{i+1}$ are strictly finite dimensional $A/J_-$ bimodules. Then by Lemma \ref{envelopesemi}, $(J_-)^{i}/(J_-)^{i+1} \in \thick_0((A/J_-)^e)$. This exhibits $A$ as an object of $\thick_N((A/J_-)^e) \subseteq \mathcal{D}((A/J_-)^e)$. Then for any $M \in \mathcal{D}(A)$
\[
M \simeq M \otimes_A^{\mathbb{L}} A \in M \otimes_A^{\mathbb{L}} \thick_N ((A/J_-)^e ) \subseteq \thick_N( M \otimes_A^{\mathbb{L}} (A/J_-)^e  ). 
\]

\end{proof}

\begin{theorem} (Derived Nakayama Lemma) 
Let $A$ be a fd DGA such that $A/J$ is $k$-separable and let $M \in \mathcal{D}(A)$. If $M \otimes_A^{\mathbb{L}} A/J_- \simeq 0$, then $M \simeq 0$.
\end{theorem}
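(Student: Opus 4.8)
The plan is to reduce the statement immediately to the bimodule radical filtration recorded in Lemma \ref{thickA/J}, which already expresses every $M \in \mathcal{D}(A)$ as an object of $\thick_N(M \otimes_A^{\mathbb{L}} (A/J_-)^e)$. Since $\thick_N(0)$ contains only zero objects, the entire theorem follows once we show that the hypothesis $M \otimes_A^{\mathbb{L}} A/J_- \simeq 0$ forces $M \otimes_A^{\mathbb{L}} (A/J_-)^e \simeq 0$. So the real content is comparing the one-sided and two-sided tensor products.

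First I would unwind the bimodule structure on the enveloping DGA $(A/J_-)^e = (A/J_-)^{op} \otimes_k (A/J_-)$. The left $A$-action that enters the tensor product $M \otimes_A^{\mathbb{L}}(-)$ is the one contracted against the right $A$-module $M$, and under the standard identification of $A^e$-modules with $A$-bimodules this left action is carried entirely by the right-hand tensor factor $A/J_-$, the factor $(A/J_-)^{op}$ remaining inert. Consequently there is a natural isomorphism in $\mathcal{D}(k)$
\[
M \otimes_A^{\mathbb{L}} (A/J_-)^e \simeq \bigl( M \otimes_A^{\mathbb{L}} A/J_- \bigr) \otimes_k (A/J_-)^{op}.
\]
Because $k$ is a field, the functor $- \otimes_k (A/J_-)^{op}$ is exact — it is merely a finite direct sum of shifts determined by the finite-dimensional graded vector space $(A/J_-)^{op}$ — so it commutes with the derived tensor product and preserves the zero object. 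Feeding in $M \otimes_A^{\mathbb{L}} A/J_- \simeq 0$ therefore gives $M \otimes_A^{\mathbb{L}} (A/J_-)^e \simeq 0$ (acyclicity being detected in $\mathcal{D}(k)$), and applying Lemma \ref{thickA/J} places $M$ in $\thick_N(0) = \{0\}$, whence $M \simeq 0$.

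I expect the only genuine subtlety to be the bookkeeping in the displayed isomorphism: correctly identifying which of the two tensor factors of $(A/J_-)^e$ absorbs the left $A$-action used in the contraction, and confirming that the comparison is compatible with the module structures one wishes to track. Everything else is formal, since the substantive work — building $A$ from $(A/J_-)^e$ in finitely many triangles — has already been carried out in Lemma \ref{thickA/J} on the back of the semisimplicity of $(A/J_-)^e$ established in Lemma \ref{envelopesemi}.
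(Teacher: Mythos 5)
Your proposal is correct, but it reaches the key intermediate claim by a genuinely different route than the paper. Both arguments finish identically, by feeding $M \otimes_A^{\mathbb{L}} (A/J_-)^e \simeq 0$ into Lemma \ref{thickA/J} and using $\thick_N(0)=\{0\}$; the difference is how that vanishing is obtained. The paper argues categorically: the kernel $\mathcal{C} = \{ X \in \mathcal{D}(A^{op}) \mid M \otimes_A^{\mathbb{L}} X \simeq 0\}$ is a thick subcategory containing $A/J_-$, hence it contains $\thick_{A^{op}}(A/J_-) = \mathcal{D}_{\f}(A^{op})$ by Theorem \ref{radfilt} applied to $A^{op}$, and $(A/J_-)^e$ lies in $\mathcal{D}_{\f}(A^{op})$ simply because it is strictly finite dimensional as a left $A$-module. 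You instead compute the left $A$-module structure of the enveloping DGA directly: one tensor factor carries the $A$-action and is a copy of $A/J_-$, the other is an inert finite-dimensional complex, so the vanishing follows from exactness of $-\otimes_k V$ (K\"unneth over the field $k$). Two small caveats in your write-up, neither fatal: under the paper's convention $A^e = A^{op} \otimes_k A$ it is the $(A/J_-)^{op}$ factor that absorbs the left action and the right-hand factor $A/J_-$ that is inert — the opposite of what you wrote — but as you anticipate this is pure bookkeeping, since either way the inert factor has underlying complex $A/J_-$; and the inert factor carries a differential, so $-\otimes_k (A/J_-)$ is not literally a sum of shifts of the identity, though over a field every finite-dimensional complex splits as its cohomology plus a contractible piece, so the functor still preserves acyclicity. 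What your approach buys: it is more elementary, avoiding Theorem \ref{radfilt} and the thick-subcategory formalism for this step altogether (separability enters only through Lemma \ref{thickA/J}). What the paper's approach buys: its thick-kernel template is exactly the argument reused for Theorem \ref{reflbound} and Theorem \ref{reflperf}, where one must replace $A/J_-$ by an arbitrary object of $\mathcal{D}_{\f}(A^{op})$ — such as the module $S$ of Lemma \ref{keylemma} — and a direct K\"unneth computation is no longer available; it also sidesteps all sign and convention issues.
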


\begin{proof}
Suppose $M \otimes_A^{\mathbb{L}} A/J_- \simeq 0$. Consider the following thick subcategory which contains $A/J_-$. 
\[
\mathcal{C} \coloneq \{ X \in \mathcal{D}(A^{op}) \mid M \otimes_A^\mathbb{L} X = 0 \}
\]
Then $\mathcal{C}$ contains $\thick_{A^{op}}(A/J_-)$ which equals $\mathcal{D}_{\f}(A^{op})$ by Theorem \ref{radfilt}. Clearly  $(A/J_-)^e \in  \mathcal{D}_{\f}(A^{op}) \subseteq \mathcal{C}$. Hence $M \otimes_A^{\mathbb{L}} (A/J_-)^e \simeq 0$. Then the result follows from Lemma \ref{thickA/J}.
\end{proof}

\begin{theorem} \label{reflbound}Let $A$ be a fd DGA such that $A/J$ is $k$-separable. Suppose $M \in \mathcal{D}(A)$ is such that $M \otimes_A^{\mathbb{L}} A/J_- \in \mathcal{D}_{\cf}(A)$ then $M \in \mathcal{D}_{\cf}(A)$. 
\end{theorem}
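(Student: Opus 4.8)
The plan is to run the same bimodule strategy used for the Derived Nakayama Lemma, now tracking cohomological finiteness instead of vanishing. Lemma \ref{thickA/J} exhibits $M$ as an object of $\thick_N(M \otimes_A^{\mathbb{L}} (A/J_-)^e)$, and $\mathcal{D}_{\cf}(A)$ is a thick subcategory of $\mathcal{D}(A)$. Hence it suffices to prove the single containment
\[
M \otimes_A^{\mathbb{L}} (A/J_-)^e \in \mathcal{D}_{\cf}(A),
\]
for then $M \in \thick_N(M \otimes_A^{\mathbb{L}} (A/J_-)^e) \subseteq \mathcal{D}_{\cf}(A)$ immediately. So the whole problem reduces to upgrading the hypothesis on the one-sided tensor $M \otimes_A^{\mathbb{L}} (A/J_-)$ to the enveloping bimodule $(A/J_-)^e$.

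Write $B := A/J_-$. First I would establish the associativity isomorphism
\[
M \otimes_A^{\mathbb{L}} B^e \simeq (M \otimes_A^{\mathbb{L}} B) \otimes_B^{\mathbb{L}} B^e,
\]
which holds because the left $A$-module structure on $B^e$ factors through the ring map $A \to B$, so that $B \otimes_B^{\mathbb{L}} B^e \simeq B^e$ as left $A$-modules and one may reassociate the derived tensor product. Here $M \otimes_A^{\mathbb{L}} B$ carries its natural right $B$-module structure. The hypothesis says $M \otimes_A^{\mathbb{L}} B \in \mathcal{D}_{\cf}(A)$; since restriction along $A \to B$ neither creates nor destroys cohomology, equivalently $M \otimes_A^{\mathbb{L}} B \in \mathcal{D}_{\cf}(B)$. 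As $\mathcal{D}(B) \simeq \mathcal{D}(A/J_+) \simeq \mathcal{D}(D_1 \times \cdots \times D_n)$ by Theorem \ref{semisimple}, the DGA $B$ is semisimple, whence $\mathcal{D}_{\cf}(B) = \mathcal{D}^{\perf}(B) = \thick_B(B)$ and therefore $M \otimes_A^{\mathbb{L}} B \in \thick_B(B)$.

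Next I would apply the exact functor $- \otimes_B^{\mathbb{L}} B^e \colon \mathcal{D}(B) \to \mathcal{D}(A)$, the target right $A$-structure coming from the surviving right action on $B^e$. Being triangulated, it sends $\thick_B(B)$ into $\thick_A(B \otimes_B^{\mathbb{L}} B^e) = \thick_A(B^e)$. Since $B$ is strictly finite dimensional, so is $B^e = B^{op} \otimes_k B$, giving $B^e \in \mathcal{D}_{\cf}(A)$ and hence $\thick_A(B^e) \subseteq \mathcal{D}_{\cf}(A)$. Combined with the displayed isomorphism this yields $M \otimes_A^{\mathbb{L}} B^e \in \mathcal{D}_{\cf}(A)$, which by the first paragraph finishes the argument.

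The step I expect to require the most care is the associativity isomorphism together with the bookkeeping of the left/right module structures: one must verify that the left $A$-action defining $M \otimes_A^{\mathbb{L}} B^e$ genuinely factors through $B$ (so it can be rewritten as a tensor over $B$), and that the right $A$-module structure surviving on $(M \otimes_A^{\mathbb{L}} B) \otimes_B^{\mathbb{L}} B^e$ agrees with the one on $M \otimes_A^{\mathbb{L}} B^e$. Granting this, everything else is formal, resting only on the thickness of $\mathcal{D}_{\cf}(A)$, the semisimplicity of $B$ via Theorem \ref{semisimple}, and Lemma \ref{thickA/J}.
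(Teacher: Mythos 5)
Your proof is correct, but the decisive step is carried out by a genuinely different argument from the paper's. Both proofs share the same skeleton: by Lemma \ref{thickA/J} and the thickness of $\mathcal{D}_{\cf}(A)$, everything reduces to showing $M \otimes_A^{\mathbb{L}} (A/J_-)^e \in \mathcal{D}_{\cf}(A)$. The paper obtains this containment one-sidedly: the subcategory $\mathcal{C} = \{X \in \mathcal{D}(A^{op}) \mid \dim H^\ast(M \otimes_A^{\mathbb{L}} X) < \infty\}$ is thick, being the preimage of $\mathcal{D}^b(k)$ under $M \otimes_A^{\mathbb{L}} -$; it contains $A/J_-$ by hypothesis, hence contains $\mathcal{D}_{\f}(A^{op}) = \thick_{A^{op}}(A/J_-)$ by Theorem \ref{radfilt} applied to $A^{op}$, and $(A/J_-)^e$, being strictly finite dimensional as a left $A$-module, lies in $\mathcal{D}_{\f}(A^{op}) \subseteq \mathcal{C}$. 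You bypass Theorem \ref{radfilt} altogether: you re-associate the tensor as $(M \otimes_A^{\mathbb{L}} B) \otimes_B^{\mathbb{L}} B^e$ with $B = A/J_-$, use semisimplicity of $B$ (Theorem \ref{semisimple} together with the quasi-isomorphism $B \to A/J_+$) to place $M \otimes_A^{\mathbb{L}} B$ in $\thick_B(B)$, and push through the triangulated functor $- \otimes_B^{\mathbb{L}} B^e$ into $\thick_A(B^e) \subseteq \mathcal{D}_{\cf}(A)$; your bookkeeping of the module structures in the associativity step is sound. What each approach buys: yours is independent of Theorem \ref{radfilt} and makes the role of semisimplicity of $A/J_-$ completely explicit, while the paper's preimage-of-a-thick-subcategory trick is the more reusable one --- replacing $\mathcal{D}^b(k)$ by the perfection-detecting object $S$ of Lemma \ref{keylemma} runs the identical argument to prove Theorem \ref{reflperf}, whereas your semisimplicity argument, applied verbatim to the hypothesis $M \otimes_A^{\mathbb{L}} A/J_- \in \mathcal{D}^b(k)$, would only place $M$ in $\mathcal{D}_{\f}(A)$ rather than $\mathcal{D}^{\perf}(A)$. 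One point you should make explicit: the equality $\mathcal{D}_{\cf}(B) = \thick_B(B)$ tacitly uses that the derived equivalence of Theorem \ref{semisimple} preserves cohomological finiteness; this is true (for instance because $\mathcal{D}_{\cf}$ admits the intrinsic description as those $X$ with $\RHom(P,X) \in \mathcal{D}^b(k)$ for every perfect $P$), and the paper's own proof of Theorem \ref{radfilt} performs the same sort of transport, but it deserves a sentence.
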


\begin{proof} Let $M \in \mathcal{D}(A)$ and set
\[
\mathcal{C} \coloneq \{ X \in \mathcal{D}(A^{op}) \mid \dim H^\ast(M \otimes_A^\mathbb{L} X) < \infty \}.
\]
This is a thick subcategory as it is the preimage of the thick subcategory $\mathcal{D}^b(k)$ under the functor $M \otimes_A^{\mathbb{L}} -: \mathcal{D}(A^{op}) \to \mathcal{D}(k)$. It contains $A/J_-$ by assumption and so it contains $\mathcal{D}_{\f}(A^{op}) = \thick(A/J_-)$. Since $(A/J_-)^e$ is a strictly finite dimensional $A^{op}$ module, $(A/J_-)^e \in \mathcal{D}_{\f}(A^{op}) \subseteq \mathcal{C}$. Therefore by Lemma \ref{thickA/J}, 
\[
M \in \thick( M \otimes_A^\mathbb{L} (A/J_-)^e) \subseteq \mathcal{D}_{\cf}(A). \qedhere
\] 
\end{proof}

\begin{remark}
The same proof works if we replace the property of having finite dimensional cohomology with having finite dimensional cohomology in each degree or similar such variants. 
\end{remark}

\section{Reflecting Perfection}

The simples of a finite dimensional algebra can detect finite projective dimension in the following sense.

\begin{prop}
If $A$ is a finite dimensional algebra and $M \in \Mod A$ is such that $M \otimes^{\mathbb{L}} A/J \in \mathcal{D}^b(k)$ then $M$ has finite projective dimension.
\end{prop}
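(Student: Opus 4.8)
The plan is to show that $M$ having finite projective dimension is equivalent to $M \in \mathcal{D}^{\perf}(A)$, and then deduce perfection from the hypothesis on $M \otimes^{\mathbb{L}} A/J$. This is the classical (discrete) case of the paper's main theorem, so I expect the proof to mirror the DGA arguments but with everything concentrated in degree zero: here $J_- = J_+ = J$ and the bimodule radical filtration is just the ordinary filtration $0 = J^N \subseteq \cdots \subseteq J \subseteq A$.

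First I would reduce to a statement about the simple modules directly. Since $A/J$ is semisimple, its simple summands $S_1, \dots, S_r$ satisfy $A/J \simeq \bigoplus_i S_i^{n_i}$, so $M \otimes^{\mathbb{L}} A/J \in \mathcal{D}^b(k)$ is equivalent to $\Tor_j^A(M, S_i)$ vanishing for $j \gg 0$ and all $i$. This is precisely the condition that $M$ has finite flat (equivalently finite projective) dimension, by the standard characterisation of projective dimension via vanishing of $\Tor$ against the simples. In the Noetherian finite-dimensional setting, finite projective dimension of $M$ is the same as $M \in \mathcal{D}^{\perf}(A) = K^b(\proj A)$.

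Alternatively, and more in the spirit of the preceding sections, I would run the thick-subcategory argument verbatim. Set
\[
\mathcal{C} \coloneq \{ X \in \mathcal{D}(A^{op}) \mid M \otimes_A^{\mathbb{L}} X \in \mathcal{D}^{\perf}(k) \}.
\]
This is thick, and it contains $A/J$ by hypothesis. Since $A$ is concentrated in degree zero we have $\mathcal{D}_{\f}(A^{op}) = \thick(A/J)$ by Theorem \ref{radfilt}, and the enveloping-algebra object $(A/J)^e = (A/J)^{op} \otimes_k (A/J)$ lies in $\mathcal{D}_{\f}(A^{op})$, hence in $\mathcal{C}$. Then Lemma \ref{thickA/J} (with $J_- = J$) gives $M \in \thick(M \otimes_A^{\mathbb{L}} (A/J)^e)$, and since $M \otimes_A^{\mathbb{L}} (A/J)^e$ is built from perfect objects (it has finite-dimensional cohomology and, being a module over the semisimple ring $(A/J)^e$, is perfect), we conclude $M \in \mathcal{D}^{\perf}(A)$.

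The main obstacle is the separability hypothesis hiding in the background: over a general field, $A/J$ need not be separable, so $(A/J)^e$ need not be semisimple and the step identifying $M \otimes_A^{\mathbb{L}} (A/J)^e$ as a sum of perfect objects can fail. In the classical statement this is precisely why one works with the genuine simples $S_i$ rather than with the bimodule $A/J$; so the cleanest route is the first one, proving $\Tor_j^A(M, S_i) = 0$ for $j \gg 0$ forces finite projective dimension, which holds over any finite-dimensional algebra without a separability assumption. The bimodule approach then serves as motivation for, and a warm-up to, the DGA generalisation where the separability of $A/J$ is imposed exactly to make $(A/J_-)^e$ semisimple via Lemma \ref{envelopesemi}.
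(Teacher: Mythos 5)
Your first route is essentially the paper's own proof: the paper disposes of this proposition with the one-line remark that it follows from minimal projective resolutions and the ordinary Nakayama Lemma, and that is exactly the ``standard characterisation'' you invoke, since minimality of a projective resolution $P_\bullet \to M$ gives $\Tor_j^A(M,A/J) \cong P_j/P_jJ$, so the hypothesis forces $P_j = 0$ for $j \gg 0$ by Nakayama (valid for arbitrary modules here because $J$ is nilpotent). You also correctly diagnose why your second, thick-subcategory route would require a separability hypothesis absent from the statement; the only slip is your opening claim that finite projective dimension is \emph{equivalent} to lying in $\mathcal{D}^{\perf}(A)$, which fails for non-finitely-generated $M \in \Mod A$ (e.g.\ $A^{(\mathbb{N})}$ is projective but not compact), though nothing in your first route depends on it.
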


One can prove this using minimal projective resolutions and the ordinary Nakayama Lemma. We will generalise this result to fd DGAs using the following construction from Section 2.3 of \cite{Orl20}.

\begin{defn} \label{Auscons}
For a fd DGA $A$, let $J_i := (J^i)_-$ for $i =1,\dots,N$ where $J^N = 0$. Set
\[
M:= A/J_1  \oplus A/J_2 \oplus \dots \oplus A/J_{N-1} \oplus A. 
\]
The Auslander algebra $\Aus(A)$ of $A$ is  defined as  $\End_A(M)$. Define the $\Aus(A)$-modules $P_i := \Hom_{A}(M,A/J_i)$ for $i = 1,\dots,N$.
\end{defn}

This provides a non-commutative resolution of the fd DGA in the following sense.

\begin{theorem} [Theorem 2.19 \cite{Orl20}]
If $A$ is a fd DGA then 
\[
- \otimes_A^{\mathbb{L}} P_N: \mathcal{D}(A) \hookrightarrow \mathcal{D}(\Aus(A))
\]
is fully faithful and if $A/J$ is $k$-separable then $\Aus(A)$ is smooth.
\end{theorem}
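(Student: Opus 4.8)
The plan is to treat the two assertions separately: full faithfulness is formal, whereas smoothness carries the real content and is where $k$-separability enters.

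I would begin by making $P_N$ explicit. Writing $M=\bigoplus_{i=1}^{N}M_i$ with $M_i=A/J_i$ and $M_N=A$ (note $J_N=(J^N)_-=0$ since $J^N=0$), the decomposition
\[
\Aus(A)=\Hom_A(M,M)=\bigoplus_{i=1}^N \Hom_A(M,M_i)
\]
realises each $P_i=\Hom_A(M,M_i)$ as a direct summand $e_i\Aus(A)$ of the free right module, where $e_i$ is the idempotent projecting $M$ onto $M_i$. In particular $P_N$ is h-projective, hence perfect, in $\mathcal D(\Aus(A))$, and $\End_{\Aus(A)}(P_N)=e_N\Aus(A)e_N=\End_A(A)\cong A$. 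Now $-\otimes_A^{\mathbb L}P_N$ has right adjoint $\RHom_{\Aus(A)}(P_N,-)$, and full faithfulness is equivalent to the unit $\eta_X\colon X\to\RHom_{\Aus(A)}(P_N,X\otimes_A^{\mathbb L}P_N)$ being a quasi-isomorphism. Because $P_N$ is compact, both $\mathrm{id}$ and the target are coproduct-preserving functors of $X$, so the full subcategory on which $\eta_X$ is invertible is localising; as it contains the generator $A$ — where $\eta_A$ is the canonical identification $A=\RHom_A(A,A)\xrightarrow{\sim}\RHom_{\Aus(A)}(P_N,P_N)\cong A$ — it is all of $\mathcal D(A)$. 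This proves the first claim.

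For smoothness I would first note that $\Aus(A)$ is itself a fd DGA: each graded piece of $\Hom_A(M_i,M_j)$ sits inside the strictly finite dimensional complex $\Hom_k(M_i,M_j)$, so $\Aus(A)$ is strictly finite dimensional and $\Aus(A)^e$ is defined. The route I would take is the earlier characterisation of smoothness for fd DGAs with $k$-separable radical quotient, namely that such a DGA is smooth if and only if $\mathcal D^{\perf}=\mathcal D_{\f}$. Two reductions make this tractable. First, since $\Aus(A)$ is strictly finite dimensional it lies in $\mathcal D_{\f}(\Aus(A))$, whence $\mathcal D^{\perf}(\Aus(A))=\thick(\Aus(A))\subseteq\mathcal D_{\f}(\Aus(A))$ automatically, so smoothness reduces to the reverse inclusion. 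Second, by the radical filtration (Theorem \ref{radfilt}) applied to $\Aus(A)$, the category $\mathcal D_{\f}(\Aus(A))$ is generated by the semisimple quotient, i.e. by the simple $\Aus(A)$-modules; hence it suffices to prove that every simple $\Aus(A)$-module is perfect. Applying the characterisation also requires that $\Aus(A)/J(\Aus(A))$ be $k$-separable, which I would deduce from the $k$-separability of $A/J$ together with the description of the semisimple strata in Theorem \ref{semisimple}.

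The crux is therefore to show that each simple $\Aus(A)$-module is perfect — the derived incarnation of the classical finiteness of the global dimension of an Auslander algebra. I would run the triangulated functor $\RHom_A(M,-)\colon\mathcal D(A)\to\mathcal D(\Aus(A))$ over the two-sided filtration $0=J_N\subseteq\cdots\subseteq J_1\subseteq A$ of Lemma \ref{thickA/J}. Each layer $J_l/J_{l+1}$ is annihilated by $J_-$, hence is a module over $A/J_-$, and by Theorem \ref{semisimple} lies in $\thick_0(A/J_-)$; the images of these layers are thus controlled by $\RHom_A(M,A/J_-)$. The finite length of the filtration then assembles, through finitely many triangles, into a finite resolution of the simples by the projectives $P_1,\dots,P_N$, exhibiting them as perfect. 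I expect the main obstacle to be twofold, and to be precisely where $k$-separability is indispensable: first, comparing the defining underived modules $P_i=\Hom_A(M,M_i)$ with the triangulated functor $\RHom_A(M,-)$ (the functor $\Hom_A(M,-)$ is not exact, since $M$ is not projective over $A$); and second, showing that the semisimple strata are sent to perfect $\Aus(A)$-modules. Both rest on $k$-separability, through Lemma \ref{envelopesemi} and Theorem \ref{semisimple}, which is what guarantees that the strata $J_l/J_{l+1}$ are genuinely semisimple and split; without separability the resolutions need not terminate and $\Aus(A)$ can fail to be smooth.
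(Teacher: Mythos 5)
The first thing to note is that the paper itself offers no proof of this statement to compare against: it is quoted directly from \cite{Orl20}, where Orlov proves smoothness by exhibiting $\mathcal{D}^{\perf}(\Aus(A))$ as an iterated gluing along exceptional-collection-type data whose endomorphism DGAs are separable semisimple algebras (his Theorem 2.18), which is a different route from yours. Your proof of the first half is correct and standard: $P_N \cong e_N\Aus(A)$ is a compact summand of the free module with $\End_{\Aus(A)}(P_N) \cong e_N\Aus(A)e_N \cong A$, the unit of the adjunction is an isomorphism at $A$, both relevant functors preserve coproducts, and the localizing-subcategory argument finishes it.

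The smoothness half, however, has a genuine circularity. You reduce smoothness to $\mathcal{D}^{\perf}(\Aus(A)) = \mathcal{D}_{\f}(\Aus(A))$ via the paper's Proposition citing \cite{Orl23}, Corollary 3.12. But the nontrivial direction of that characterisation ($\mathcal{D}^{\perf} = \mathcal{D}_{\f}$ implies smooth) is itself obtained in Orlov's work by embedding $\mathcal{D}^{\perf}(B)$ into $\mathcal{D}^{\perf}(\Aus(B))$, invoking the smoothness of the Auslander DGA --- i.e.\ the very theorem you are proving --- and then descending smoothness along an admissible embedding. So this characterisation is downstream of Theorem 2.19 and cannot be used to prove it. A non-circular substitute does exist inside this paper's toolkit: by Lemma \ref{thickA/J} applied to $B = \Aus(A)$ one has $B \in \thick_N\bigl((B/J(B)_-)^e\bigr)$ in $\mathcal{D}(B^e)$, so $B$ is smooth as soon as $B/J(B)_-$ is perfect as a left and as a right $B$-module; but note this already needs $B/J(B)$ to be $k$-separable, which you only assert --- Theorem \ref{semisimple} says nothing about separability of the Auslander DGA's semisimple quotient (in \cite{Orl20} this is part of Theorem 2.18(5)).

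Even granting those repairs, the crux is asserted rather than proved. Perfection of the simple $\Aus(A)$-modules is the DG analogue of Auslander's theorem that $\End_\Lambda(\bigoplus_i \Lambda/J^i)$ has finite global dimension, and it requires building finite resolutions of the simples out of the projectives $e_i\Aus(A)$. The radical filtration runs in the opposite direction: it writes finite dimensional modules as iterated extensions \emph{of} the simples (this is exactly how Theorem \ref{radfilt} is proved), and applying $\RHom_A(M,-)$ or $-\otimes_A^{\mathbb{L}}P_N$ to a filtration of $A$ only expresses the image of $A$ in terms of images of semisimple strata; it never resolves a simple \emph{by} projectives. All the real difficulty sits in the obstacles you flag and then set aside: $\Hom_A(M,-)$ is not exact, $\Aus(A) = \End_A(M)$ is the \emph{underived} endomorphism DGA so the derived functor $\RHom_A(M,-)$ does not carry $M_i$ to $P_i$, and the ideals $J_l = (J^l)_-$ of Definition \ref{Auscons} are not the powers $(J_-)^l$ appearing in Lemma \ref{thickA/J} (your sketch conflates these two filtrations). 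Separability, via Lemma \ref{envelopesemi}, makes the strata split as bimodules, but it does not by itself manufacture the explicit syzygy computations; the sentence ``the finite length of the filtration then assembles\dots into a finite resolution of the simples'' is precisely the content of the theorem, and it is left unproved.
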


The following was observed in Proposition 6.9 of \cite{KS22}.

\begin{lemma} \label{thomason}
For $M \in \mathcal{D}(A)$, if $M \otimes_A^{\mathbb{L}} P_N \in \mathcal{D}^{\perf}(\Aus(A))$, then $M \in \mathcal{D}^{\perf}(A)$.
\end{lemma}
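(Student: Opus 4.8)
The plan is to recognise this as an instance of the formal principle that a fully faithful, coproduct-preserving exact functor reflects compactness, together with the standard identification of perfect modules as compact objects. Write $F := - \otimes_A^{\mathbb{L}} P_N \colon \mathcal{D}(A) \to \mathcal{D}(\Aus(A))$. Being a derived tensor product, $F$ is a left adjoint and hence preserves arbitrary coproducts; by the preceding theorem it is moreover fully faithful. Both $\mathcal{D}(A)$ and $\mathcal{D}(\Aus(A))$ are compactly generated, by the free modules $A$ and $\Aus(A)$ respectively, and in each case the subcategory of compact objects is exactly the perfect modules $\mathcal{D}^{\perf}(A)$ and $\mathcal{D}^{\perf}(\Aus(A))$, since the compact generator has the perfect modules as its thick closure. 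So the lemma is equivalent to the statement that $F$ reflects compactness.

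First I would observe that the essential image $\mathcal{I} := \im F$ is a localizing subcategory of $\mathcal{D}(\Aus(A))$: it is triangulated because $F$ is exact and fully faithful, and it is closed under coproducts because $\bigoplus_j F(Y_j) \simeq F(\bigoplus_j Y_j)$ again lies in $\mathcal{I}$. Consequently coproducts computed inside $\mathcal{I}$ coincide with those computed in the ambient category $\mathcal{D}(\Aus(A))$, and $F$ restricts to an equivalence $\mathcal{D}(A) \simeq \mathcal{I}$ carrying $\mathcal{D}^{\perf}(A)$ to the compact objects of $\mathcal{I}$.

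The key step is the reflection argument itself. Suppose $F(M)$ is perfect, hence compact, in $\mathcal{D}(\Aus(A))$. For an arbitrary family $\{Y_j\}$ in $\mathcal{D}(A)$, full faithfulness identifies $\Hom_{\mathcal{D}(A)}(M, Y_j)$ with $\Hom_{\mathcal{D}(\Aus(A))}(FM, FY_j)$ and, using $F(\bigoplus_j Y_j) \simeq \bigoplus_j FY_j$, identifies $\Hom_{\mathcal{D}(A)}(M, \bigoplus_j Y_j)$ with $\Hom_{\mathcal{D}(\Aus(A))}(FM, \bigoplus_j FY_j)$. Under these identifications the canonical comparison map for $M$ in $\mathcal{D}(A)$ becomes the comparison map for $FM$ in $\mathcal{D}(\Aus(A))$, which is an isomorphism because $FM$ is compact. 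As $\{Y_j\}$ was arbitrary, $M$ is compact in $\mathcal{D}(A)$, i.e. $M \in \mathcal{D}^{\perf}(A)$. Equivalently, since $FM$ lies in the coproduct-closed subcategory $\mathcal{I}$, its compactness in the ambient category already forces compactness in $\mathcal{I} \simeq \mathcal{D}(A)$.

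I do not anticipate a genuine obstacle here: the content is entirely formal once full faithfulness is available. The two points requiring care are that $F$ preserves \emph{arbitrary} (not merely finite) coproducts, which holds because it is a left adjoint, and that the compact objects are precisely $\mathcal{D}^{\perf}$ in each category, which is standard for derived categories of DGAs. A reassuring consistency check on the easy direction is that $F(A) = P_N$ is a direct summand of $\Aus(A)$ and so is already perfect, confirming that $F$ does send the compact generator into $\mathcal{D}^{\perf}(\Aus(A))$.
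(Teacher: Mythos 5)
Your proof is correct and is essentially the argument in the paper: both use that $F = -\otimes_A^{\mathbb{L}} P_N$ is fully faithful and preserves coproducts to transport the compactness (= perfection) of $FM$ back to $M$ via the chain of isomorphisms $\Hom_A(M,\bigoplus t_i) \simeq \Hom_{\Aus(A)}(FM, \bigoplus Ft_i) \simeq \bigoplus \Hom_A(M,t_i)$. The extra framing via the localizing essential image is harmless but not needed.
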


\begin{proof}
Let $t_i \in \mathcal{D}(A)$ be a family of objects, then since $F \coloneqq - \otimes_A^{\mathbb{L}} P_N $ is fully faithful and preserves coproducts
\begin{align*}
\RHom_A\left(M , \bigoplus t_i\right) & \simeq \RHom_{\Aus(A)}\left(FM, F\bigoplus t_i\right) \\
& \simeq \bigoplus \RHom_{\Aus(A)}\left(FM, Ft_i\right) \\
& \simeq \bigoplus \RHom_A\left(M,t_i\right)
\end{align*}
and the composition provides an inverse to the canonical map. 
\end{proof}

\begin{lemma} \label{keylemma}
There exists an $S \in \mathcal{D}_{\f}(A^{op})$ with the following property. If $M \in \mathcal{D}(A)$ with $M \otimes_A^{\mathbb{L}} S \in \mathcal{D}^b(k)$ then $M  \in \mathcal{D}^{\perf}(A)$. 
\end{lemma}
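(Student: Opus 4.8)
The plan is to take $S$ to be the bimodule $P_N = \Hom_A(M,A)$ of Definition \ref{Auscons}, regarded only as a right $A^{op}$-module (equivalently a left $A$-module) by forgetting its right $\Aus(A)$-action, and to deduce perfection of $M$ from the fully faithful embedding $- \otimes_A^{\mathbb{L}} P_N \colon \mathcal{D}(A) \hookrightarrow \mathcal{D}(\Aus(A))$ together with Lemma \ref{thomason}. The guiding idea is that detecting perfection over $A$ can be traded, via this embedding, for detecting mere cohomological finiteness over the smooth algebra $\Aus(A)$, and cohomological finiteness is exactly what the hypothesis $M \otimes_A^{\mathbb{L}} S \in \mathcal{D}^b(k)$ records.

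First I would check that $S \in \mathcal{D}_{\f}(A^{op})$. Since $A$ is a fd DGA and $M = A/J_1 \oplus \cdots \oplus A/J_{N-1} \oplus A$ is a finite sum of quotients of $A$, the module $M$ is strictly finite dimensional; hence the Hom complex $P_N = \Hom_A(M,A)$ sits inside $\Hom_k(M,A)$ and is itself strictly finite dimensional, so as a right $A^{op}$-module it lies in $\mathcal{D}_{\f}(A^{op})$. Next I would record the key compatibility: the right $\Aus(A)$-module $M \otimes_A^{\mathbb{L}} P_N$ becomes, after restriction of scalars along $k \to \Aus(A)$, precisely the complex $M \otimes_A^{\mathbb{L}} S$, since the $\Aus(A)$-action comes entirely from $P_N$ and forgetting it does not alter the underlying complex. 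As $\mathcal{D}_{\cf}(\Aus(A))$ is by definition the preimage of $\mathcal{D}^b(k)$ under this restriction, the hypothesis $M \otimes_A^{\mathbb{L}} S \in \mathcal{D}^b(k)$ is equivalent to $M \otimes_A^{\mathbb{L}} P_N \in \mathcal{D}_{\cf}(\Aus(A))$.

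The argument then closes as follows. Because $A/J$ is $k$-separable, the cited theorem of \cite{Orl20} gives that $\Aus(A)$ is smooth, and since $\Aus(A) = \End_A(M)$ is a fd DGA it is in particular proper; for a smooth and proper DGA one has $\mathcal{D}_{\cf}(\Aus(A)) = \mathcal{D}^{\perf}(\Aus(A))$. Thus $M \otimes_A^{\mathbb{L}} P_N \in \mathcal{D}^{\perf}(\Aus(A))$, and Lemma \ref{thomason} yields $M \in \mathcal{D}^{\perf}(A)$, as desired. I expect the only real friction to be the bimodule bookkeeping in the previous paragraph—verifying that the restriction of $P_N$ to $A^{op}$ is strictly finite dimensional and, more importantly, that restricting $M \otimes_A^{\mathbb{L}} P_N$ to $k$ genuinely reproduces $M \otimes_A^{\mathbb{L}} S$—together with pinning down the smooth-and-proper input $\mathcal{D}_{\cf} = \mathcal{D}^{\perf}$ for $\Aus(A)$, either from the general smooth-proper statement or, via the earlier corollary, after checking that $\Aus(A)/J(\Aus(A))$ is again $k$-separable.
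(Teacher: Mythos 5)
Your proof is correct, and it is a leaner route than the paper's, though both share the same skeleton: Orlov's Auslander algebra $E := \Aus(A)$, the fully faithful embedding $-\otimes_A^{\mathbb{L}} P_N \colon \mathcal{D}(A) \hookrightarrow \mathcal{D}(E)$, smoothness of $E$ (which is where $k$-separability of $A/J$ enters), the consequence $\mathcal{D}_{\cf}(E) = \mathcal{D}^{\perf}(E)$, and Lemma \ref{thomason}. The difference is the choice of witness $S$. The paper takes $S = P_N \otimes_E^{\mathbb{L}} E/J(E)_-$ and composes \emph{two} perfection-reflecting functors, $-\otimes_A^{\mathbb{L}} P_N$ and $-\otimes_E^{\mathbb{L}} E/J(E)_-$; the second reflection step invokes Theorem \ref{reflbound} applied to $E$ (hence the radical machinery for $E$, with the separability of $E/J(E)$ implicitly in the background), and then requires a separate check that $P_N \otimes_E E/J(E)_-$ is strictly finite dimensional over $A^{op}$. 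You instead take $S = P_N$ itself, which is already strictly finite dimensional as a left $A$-module, and you replace the paper's second reflection step by the elementary remark that cohomological finiteness of an $E$-module is a property of its underlying complex: the hypothesis $M \otimes_A^{\mathbb{L}} S \in \mathcal{D}^b(k)$ \emph{is} the statement $M \otimes_A^{\mathbb{L}} P_N \in \mathcal{D}_{\cf}(E)$, since the derived tensor product does not see the right $E$-action on $P_N$. This buys economy: no appeal to Theorem \ref{reflbound}, and no separability question for $\Aus(A)/J(\Aus(A))$, provided you quote the general smooth-and-proper fact $\mathcal{D}_{\cf} = \mathcal{D}^{\perf}$ (Lemma 3.8 of \cite{KS22}, recorded as Remark \ref{kronecker}) rather than the separability-dependent corollary of Section \ref{prelims} — and indeed only the inclusion $\mathcal{D}_{\cf}(E) \subseteq \mathcal{D}^{\perf}(E)$, which follows from smoothness alone, is needed. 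Since the downstream applications (Theorems \ref{reflperf} and \ref{descperfdual2}) require only \emph{some} $S \in \mathcal{D}_{\f}(A^{op})$ with the stated property, your choice serves them equally well. The only point to tidy is notational: the $M$ of Definition \ref{Auscons} clashes with the $M$ of the lemma, so rename one of them.
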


\begin{proof}
Let $E \coloneqq\Aus(A)$ and $J(E)_-$ the internal DG ideal associated to the radical of the fd DGA $E$. Consider the composition
\[
\mathcal{D}(A) \hookrightarrow \mathcal{D}(E) \rightarrow \mathcal{D}(E/J(E)_-) 
\]
of $- \otimes_A^\mathbb{L} P_N$ and $-\otimes_E^{\mathbb{L}} E/J(E)_-$. By Lemma \ref{thomason}, the first functor reflects perfection. Since $E$ is smooth, $\mathcal{D}_{\cf}(E) = \mathcal{D}^{\perf}(E)$ and so by Corollary \ref{reflbound} i), the second functor reflects perfection. Therefore, if $M \otimes^{\mathbb{L}}_A P_N \otimes^{\mathbb{L}}_E E/J(E)_-$ has finite dimensional cohomology then $M \in \mathcal{D}^{\perf}(A)$. It remains to show that $P_N \otimes^{\mathbb{L}}_E E/J(E)_- \in \mathcal{D}_{\f}(A^{op})$. This follows since $P_N$ is a summand of $E$ and so $P_N \otimes^{\mathbb{L}}_E E/J(E)_-$ can be modelled as $P_N \otimes_E E/J(E)_-$ which is strictly finite dimensional.
\end{proof}

\begin{theorem} [Reflecting Perfection] \label{reflperf}
Let $A$ be a fd DGA such that $A/J$ is $k$-separable. If $M  \otimes^{\mathbb{L}}_A A/J_- \in \mathcal{D}^b(k)$ then $M \in \mathcal{D}^{\perf}(A)$. 
\end{theorem}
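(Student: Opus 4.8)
The plan is to reduce Theorem \ref{reflperf} to Lemma \ref{keylemma} by exactly the thick-subcategory argument already used for the Derived Nakayama Lemma and for Theorem \ref{reflbound}. Lemma \ref{keylemma} has done the substantive work: it produces a single test object $S \in \mathcal{D}_{\f}(A^{op})$ such that $M \otimes_A^{\mathbb{L}} S \in \mathcal{D}^b(k)$ already forces $M \in \mathcal{D}^{\perf}(A)$. So all that remains is to propagate the hypothesis on $A/J_-$ across $\thick(A/J_-)$ to reach this particular $S$.

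First I would fix $M \in \mathcal{D}(A)$ with $M \otimes_A^{\mathbb{L}} A/J_- \in \mathcal{D}^b(k)$ and set
\[
\mathcal{C} \coloneqq \{ X \in \mathcal{D}(A^{op}) \mid M \otimes_A^{\mathbb{L}} X \in \mathcal{D}^b(k) \}.
\]
As in the proof of Theorem \ref{reflbound}, this is a thick subcategory of $\mathcal{D}(A^{op})$: it is the preimage of the thick subcategory $\mathcal{D}^b(k) \subseteq \mathcal{D}(k)$ under the triangulated functor $M \otimes_A^{\mathbb{L}} -$, and preimages of triangulated subcategories are triangulated while closure under summands is inherited from the same closure for $\mathcal{D}^b(k)$. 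By hypothesis $A/J_- \in \mathcal{C}$, so Theorem \ref{radfilt} applied to $A^{op}$ gives $\mathcal{D}_{\f}(A^{op}) = \thick(A/J_-) \subseteq \mathcal{C}$.

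Now the object $S$ supplied by Lemma \ref{keylemma} lies in $\mathcal{D}_{\f}(A^{op})$, hence in $\mathcal{C}$, so $M \otimes_A^{\mathbb{L}} S \in \mathcal{D}^b(k)$. The defining property of $S$ in Lemma \ref{keylemma} then yields $M \in \mathcal{D}^{\perf}(A)$, completing the proof.

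I do not expect a genuine obstacle in this final step, since all the real content — the construction of $S$ from the Auslander algebra, reflection of perfection along $- \otimes_A^{\mathbb{L}} P_N$, and the smoothness of $\Aus(A)$ — is already packaged into Lemma \ref{keylemma}. The one point that must be honoured is that $S$ lives in $\mathcal{D}_{\f}(A^{op}) = \thick(A/J_-)$ and not merely in $\mathcal{D}_{\cf}(A^{op})$: it is precisely membership in $\thick(A/J_-)$ that allows the single hypothesis on $A/J_-$ to propagate to $S$ through the thick subcategory $\mathcal{C}$. This is exactly what Lemma \ref{keylemma} asserts, so the theorem follows formally.
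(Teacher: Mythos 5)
Your proposal is correct and follows essentially the same argument as the paper: you form the same thick subcategory $\mathcal{C}$ of test objects, use Theorem \ref{radfilt} applied to $A^{op}$ to propagate the hypothesis from $A/J_-$ to all of $\mathcal{D}_{\f}(A^{op})$, and then apply Lemma \ref{keylemma} to the object $S$. Your closing remark correctly identifies the key point, namely that $S$ lies in $\thick(A/J_-)$ rather than merely in $\mathcal{D}_{\cf}(A^{op})$, which is exactly what makes the reduction work.
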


\begin{proof}
By Theorem \ref{radfilt} applied to $A^{op}$, $\mathcal{D}_{\f}(A^{op}) = \thick(A/J_-)$. Now consider the subcategory 
\[
\mathcal{C} \coloneq \{ N \mid M \otimes_A^{\mathbb{L}} N \in \mathcal{D}^b(k) \} \subseteq{\mathcal{D}(A^{op})}.
\]
Then $\mathcal{C}$ is a thick subcategory and by assumption it contains $A/J_-$. Hence it contains all of $\mathcal{D}_{\f}(A^{op})$. In particular it contains $S$ as in Lemma \ref{keylemma}. Therefore $M \otimes_A^{\mathbb{L}} S$ has finite dimensional cohomology so $M \in \mathcal{D}^{\perf}(A)$.  
\end{proof}

Since $-\otimes_A^\mathbb{L} A/J_-$ also preserves perfection this gives another characterisation of the perfect modules.

\begin{cor} \label{descperf}
If $A$ is a fd DGA such that $A/J$ is $k$-separable, then 
\[
\mathcal{D}^{\perf}(A) = \{ M \in \mathcal{D}(A) \mid M \otimes_A^{\mathbb{L}} A/J_- \in \mathcal{D}^b(k) \}.
\]
\end{cor}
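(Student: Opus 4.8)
The plan is to prove the two inclusions separately, since one of them is precisely the content of Theorem \ref{reflperf}. Writing
\[
\mathcal{C} \coloneq \{ M \in \mathcal{D}(A) \mid M \otimes_A^{\mathbb{L}} A/J_- \in \mathcal{D}^b(k) \},
\]
the reflecting perfection statement gives $\mathcal{C} \subseteq \mathcal{D}^{\perf}(A)$ immediately. So the whole task reduces to the reverse inclusion $\mathcal{D}^{\perf}(A) \subseteq \mathcal{C}$; that is, to showing that $- \otimes_A^{\mathbb{L}} A/J_-$ carries perfect modules to cohomologically finite complexes.

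For this reverse inclusion I would argue by thickness, exactly as in the proofs of Theorem \ref{reflbound} and Theorem \ref{reflperf}. First I would observe that $\mathcal{C}$ is a thick subcategory of $\mathcal{D}(A)$, being the preimage of the thick subcategory $\mathcal{D}^b(k) \subseteq \mathcal{D}(k)$ under the exact functor $- \otimes_A^{\mathbb{L}} A/J_- \colon \mathcal{D}(A) \to \mathcal{D}(k)$, and the preimage of a thick subcategory under an exact functor is again thick. Next I would check that $A \in \mathcal{C}$: indeed $A \otimes_A^{\mathbb{L}} A/J_- \simeq A/J_-$, and $A/J_-$ is a quotient of the fd DGA $A$, hence strictly finite dimensional and in particular cohomologically finite. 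Since $\mathcal{D}^{\perf}(A) = \thick(A)$ is the smallest thick subcategory containing $A$, it follows that $\mathcal{D}^{\perf}(A) \subseteq \mathcal{C}$.

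Combining the two inclusions yields the claimed equality. There is essentially no obstacle here: the substance is carried entirely by Theorem \ref{reflperf}, and the remaining direction only needs the elementary facts that the perfect modules form the thick closure of $A$ and that $- \otimes_A^{\mathbb{L}} A/J_-$ preserves perfection. The single point worth verifying with care is that the image $A/J_-$ of the generator is genuinely cohomologically finite, which is immediate from $A$ being a fd DGA.
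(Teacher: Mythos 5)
Your proposal is correct and follows the paper's own reasoning: the paper deduces the corollary from Theorem \ref{reflperf} together with the one-line observation that $-\otimes_A^{\mathbb{L}} A/J_-$ preserves perfection, which is exactly the inclusion you verify via the thickness argument with generator $A$ and the fact that $A\otimes_A^{\mathbb{L}} A/J_- \simeq A/J_-$ is strictly finite dimensional. Your write-up merely makes explicit the thick-subcategory bookkeeping that the paper leaves implicit.
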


\section{The Codual version and Gorenstein DGAs} \label{GorandDual}

We consider a dual version of Corollary \ref{descperf} where $-\otimes_A^{\mathbb{L}} A/J_-$ is replaced with $\RHom_A(A/J_-,-)$. We first recall some facts about Ext-Tor duality. The $k$-dual functor, 
\[
(-)^\vee \coloneq \Hom_k(-,k) : \mathcal{D}(A)^{op} \rightarrow \mathcal{D}(A^{op})
\]
restricts to an equivalence $\mathcal{D}_{\cf}(A)^{op} \simeq \mathcal{D}_{\cf}(A^{op})$. If $A$ is a fd DGA then it also restricts to an equivalence $\mathcal{D}_{\f}(A) \simeq \mathcal{D}_{\f}(A^{op})^{op}$.

\begin{prop} [$\Ext$-$\Tor$ Duality]
Let $A$ be a DGA, then for any $M \in \mathcal{D}(A)$ and $N \in \mathcal{D}(A^{op})$
\[
\RHom_A(M,N^\vee) \simeq (M \otimes_A^{\mathbb{L}} N)^\vee 
\]
\end{prop}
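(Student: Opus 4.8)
The plan is to establish the natural isomorphism $\RHom_A(M,N^\vee) \simeq (M \otimes_A^{\mathbb{L}} N)^\vee$ by first proving it at the level of underlying complexes for nice representatives and then invoking derived functoriality. The cleanest route I would take is to exploit the adjunction-style relationship that characterises the $k$-dual. Since $k$ is a field, $(-)^\vee = \Hom_k(-,k)$ is exact, so it descends to the derived level without needing to take a resolution of $k$; this is the key feature that makes the statement clean. I would therefore aim to produce a chain-level isomorphism that is manifestly natural and then observe that both sides compute the correct derived functors.

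First I would reduce to the case where $M$ is cofibrant (a suitable K-projective or semifree $A$-module) and $N$ is cofibrant as an $A^{op}$-module, so that $\RHom_A(M,N^\vee)$ is modelled by $\Hom_A(M,N^\vee)$ and $M \otimes_A^{\mathbb{L}} N$ by $M \otimes_A N$. The goal then becomes a chain-level isomorphism
\[
\Hom_A(M, \Hom_k(N,k)) \simeq \Hom_k(M \otimes_A N, k).
\]
This is precisely the tensor-hom adjunction for the bimodule situation: both sides are the complex of $k$-linear maps $M \otimes_A N \to k$, with the left-hand side expressing such a map as an $A$-linear map from $M$ into the $k$-dual of $N$. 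The second step is to write down this adjunction isomorphism explicitly, sending $f \colon M \to \Hom_k(N,k)$ to the map $m \otimes n \mapsto f(m)(n)$, and to check it is a morphism of complexes compatible with the internal gradings and differentials (the Koszul sign conventions must be tracked, but the underlying bijection is the standard currying isomorphism).

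The third step is to verify that this chain-level isomorphism is compatible with the model-category structure, namely that for cofibrant $M$ the complex $N^\vee$ behaves correctly so that $\Hom_A(M,N^\vee)$ genuinely computes $\RHom_A(M,N^\vee)$, and symmetrically that $M \otimes_A N$ computes $M \otimes_A^{\mathbb{L}} N$. Since $(-)^\vee$ is exact over a field, it preserves quasi-isomorphisms, so replacing $N$ by a cofibrant model does not disturb $N^\vee$ up to quasi-isomorphism; this legitimises the passage back and forth. I would then note naturality in both variables follows from naturality of the currying bijection, so the isomorphism is one of functors $\mathcal{D}(A)^{op} \times \mathcal{D}(A^{op}) \to \mathcal{D}(k)$.

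The main obstacle I anticipate is bookkeeping rather than conceptual: ensuring the adjunction isomorphism respects the differential graded structure with the correct signs, and confirming that the representatives chosen compute the intended derived functors (in particular that cofibrancy of $M$ suffices, given that $(-)^\vee$ is already exact and hence requires no further fibrant/cofibrant replacement of $N^\vee$). Once exactness of the $k$-dual is exploited to avoid resolving $k$, the remaining work is the standard tensor-hom adjunction transported to the derived category, so no deep input beyond the formal properties of $\RHom$ and $\otimes^{\mathbb{L}}$ should be needed.
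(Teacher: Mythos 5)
Your proposal is correct and follows essentially the same route as the paper: both hinge on the observation that over a field $(-)^\vee = \Hom_k(-,k) \simeq \RHom_k(-,k)$ requires no resolution, after which the statement is exactly the derived Hom-Tensor adjunction $\RHom_A(M,\RHom_k(N,k)) \simeq \RHom_k(M \otimes_A^{\mathbb{L}} N, k)$. The only difference is that the paper cites this adjunction as known, whereas you unpack its proof by hand via cofibrant replacements and the chain-level currying isomorphism.
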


\begin{proof}
We note that $(-)^\vee = \Hom_k(-,k) \simeq \RHom_k(-,k)$. So the result follows from the derived Hom-Tensor adjunction
\[
 \RHom_A(M,\RHom_k(N,k))  \simeq \RHom_k( M 
 \otimes_A^{\mathbb{L}} N,k) \qedhere
\]
\end{proof}

Suppose $A$ is a proper DGA so $\mathcal{D}^{\perf}(A) \subseteq \mathcal{D}_{\cf}(A)$. Then we can restrict $(-)^\vee$ to an equivalence
\begin{equation} \label{dualperfequiv}
(-)^\vee: \mathcal{D}^{\perf}(A) = \thick(A) \xrightarrow{\sim} \thick_{A^{op}}(A^\vee)^{op} \subseteq \mathcal{D}_{\cf}(A^{op})^{op} 
\end{equation}

This is related to the following definition in \cite{Jin20}.

\begin{defn}
A DGA $A$ is Gorenstein if $\mathcal{D}^{\perf}(A) = \thick_A(A^\vee)$.
\end{defn}

\begin{prop} \label{Gorensteinvskdual}Let $A$ be a proper DGA. The functor $(-)^\vee$ restricts to an equivalence $\mathcal{D}^{\perf}(A) \simeq \mathcal{D}^{\perf}(A^{op})^{op}$ if and only if $A$ is Gorenstein.
\end{prop}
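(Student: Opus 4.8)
The plan is to translate both conditions in the biconditional into equalities of subcategories of $\mathcal{D}_{\cf}(A^{op})$ and observe that they are literally the same equality. The engine is the $k$-dual, which by the discussion preceding Proposition \ref{Gorensteinvskdual} restricts to a contravariant equivalence $(-)^\vee\colon \mathcal{D}_{\cf}(A)^{op} \xrightarrow{\sim} \mathcal{D}_{\cf}(A^{op})$. Since $A$ is proper we have $\mathcal{D}^{\perf}(A) = \thick_A(A) \subseteq \mathcal{D}_{\cf}(A)$, and $A^\vee \in \mathcal{D}_{\cf}(A)$, so both $\mathcal{D}^{\perf}(A)$ and $\thick_A(A^\vee)$ are full subcategories of $\mathcal{D}_{\cf}(A)$ to which the equivalence applies. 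Because a contravariant equivalence carries the thick subcategory generated by an object $X$ to the thick subcategory generated by $X^\vee$, and preserves and reflects containments (hence equalities) of full triangulated subcategories, I will use it freely to move statements between $A$ and $A^{op}$.

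First I would handle the equivalence condition. Restricting $(-)^\vee$ to $\mathcal{D}^{\perf}(A)^{op}$ yields a fully faithful functor into $\mathcal{D}_{\cf}(A^{op})$, and by \eqref{dualperfequiv} its essential image is exactly $\thick_{A^{op}}(A^\vee)$. Consequently $(-)^\vee$ restricts to an equivalence $\mathcal{D}^{\perf}(A) \simeq \mathcal{D}^{\perf}(A^{op})^{op}$ if and only if this essential image coincides with the target, that is, if and only if $\thick_{A^{op}}(A^\vee) = \mathcal{D}^{\perf}(A^{op}) = \thick_{A^{op}}(A^{op})$ as subcategories of $\mathcal{D}(A^{op})$.

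Next I would unpack the Gorenstein condition. By definition $A$ is Gorenstein exactly when $\thick_A(A) = \thick_A(A^\vee)$ inside $\mathcal{D}_{\cf}(A)$. Applying the contravariant equivalence $(-)^\vee$ to this equality gives the equivalent statement $\thick_{A^{op}}(A^\vee) = \thick_{A^{op}}\bigl((A^\vee)^\vee\bigr)$, using \eqref{dualperfequiv} on the left-hand generator. Here I would invoke double duality: since $A$ is proper the double $k$-dual of the regular bimodule $A$ is naturally $A$ again, and viewed through $(-)^\vee$ the object $(A^\vee)^\vee$ is the free rank-one right $A^{op}$-module, so $\thick_{A^{op}}\bigl((A^\vee)^\vee\bigr) = \thick_{A^{op}}(A^{op}) = \mathcal{D}^{\perf}(A^{op})$. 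Thus $A$ is Gorenstein if and only if $\thick_{A^{op}}(A^\vee) = \mathcal{D}^{\perf}(A^{op})$, which is precisely the condition extracted in the previous step, completing the biconditional.

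I expect the only genuinely delicate point to be the bookkeeping of module structures under $(-)^\vee$: one must keep track of which side $A^\vee$ is a module on at each stage and, in particular, verify carefully that the double dual $(A^\vee)^\vee$ is identified with the free right $A^{op}$-module and not merely with $A$ as some other bimodule. Properness is exactly what guarantees this identification (and that everything stays in $\mathcal{D}_{\cf}$). The remaining steps are formal consequences of $(-)^\vee$ being a contravariant equivalence and of \eqref{dualperfequiv}.
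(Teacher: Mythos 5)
Your proposal is correct and is essentially the paper's own proof: both use Equation \eqref{dualperfequiv} to identify the essential image of $(-)^\vee$ restricted to $\mathcal{D}^{\perf}(A)$ as $\thick_{A^{op}}(A^\vee)$, and then apply the contravariant equivalence $(-)^\vee\colon \mathcal{D}_{\cf}(A) \simeq \mathcal{D}_{\cf}(A^{op})^{op}$ together with double duality $(A^\vee)^\vee \simeq A$ to translate the Gorenstein condition into the same equality $\thick_{A^{op}}(A^\vee) = \mathcal{D}^{\perf}(A^{op})$. The paper merely phrases this intermediate condition as ``$A^{op}$ is Gorenstein'' and notes that Gorensteinness passes between $A$ and $A^{op}$, which is the identical computation you carry out directly.
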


\begin{proof}
By Equation (\ref{dualperfequiv}), the functor $(-)^\vee$ restricts to an equivalence on $\mathcal{D}^{\perf}(A)$ if and only if $\mathcal{D}^{\perf}(A^{op}) = \thick_{A^{op}}(A^\vee)$. This is exactly the condition that $A^{op}$ is Gorenstein. The result then follows since $A$ is Gorenstein if and only if $A^{op}$ is. Indeed the equivalence $(-)^\vee: \mathcal{D}_{\cf}(A) \simeq \mathcal{D}_{\cf}(A^{op})^{op}$ sends $\mathcal{D}^{\perf}(A)$ to $\thick_{A^{op}}(A^\vee)$ and $\thick_A(A^\vee)$ to $\mathcal{D}^{\perf}(A^{op})$. Therefore $\mathcal{D}^{\perf}(A) = \thick_A(A^\vee)$ if and only if $\thick_{A^{op}}(A^\vee) = \mathcal{D}^{\perf}(A^{op})$.
\end{proof}

There is significant room for confusion here: One always has an equivalence $\mathcal{D}^{\perf}(A) \simeq \mathcal{D}^{\perf}(A^{op})^{op}$ but in general it is not given by $(-)^\vee$.

\begin{prop} \label{otherduality}
For any DGA $A$, there is an equivalence
\[
\RHom_A(-,A): \mathcal{D}^{\perf}(A) \xrightarrow{\sim} \mathcal{D}^{\perf}(A^{op})^{op}.
\]
\end{prop}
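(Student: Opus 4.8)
The plan is to exhibit $\RHom_{A^{op}}(-,A)$ as a quasi-inverse via the standard biduality for perfect complexes. Throughout, $A$ is regarded as an $A$-bimodule, so that for a right module $M$ the complex $\RHom_A(M,A)$ carries the residual left $A$-action, i.e.\ is an object of $\mathcal{D}(A^{op})$; symmetrically $\RHom_{A^{op}}(-,A)$ sends left modules to right modules. Since both functors are contravariant, I view $D := \RHom_A(-,A)$ as a functor landing in $\mathcal{D}^{\perf}(A^{op})^{op}$ and its counterpart $D' := \RHom_{A^{op}}(-,A)$ as landing in $\mathcal{D}^{\perf}(A)^{op}$, so that the composites $D'D$ and $DD'$ are covariant endofunctors.

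First I would check that $D$ preserves perfection. As a left $A$-module one has $\RHom_A(A,A) \simeq A$, so $D$ sends $A$ to $A$. Being a contravariant exact functor that preserves finite coproducts and direct summands, $D$ carries $\thick_0(A)$ into $\thick_0(A)$ and, inducting along the triangles defining $\thick_n$, carries all of $\mathcal{D}^{\perf}(A) = \thick(A)$ into $\mathcal{D}^{\perf}(A^{op})$. The same argument applies to $D'$.

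Next I would construct the biduality natural transformation. For each $M$ there is an evaluation map
\[
\eta_M \colon M \longrightarrow \RHom_{A^{op}}(\RHom_A(M,A),A) = D'DM
\]
natural in $M$ and compatible with shifts and triangles, so that the full subcategory of objects on which $\eta_M$ is an isomorphism is a thick subcategory of $\mathcal{D}(A)$. For $M = A$ the map $\eta_A$ is the canonical isomorphism $A \xrightarrow{\sim} \RHom_{A^{op}}(A,A)$. Hence the thick subcategory on which $\eta$ is invertible contains $A$ and therefore all of $\thick(A) = \mathcal{D}^{\perf}(A)$. By the symmetric argument, $DD'$ is naturally isomorphic to the identity on $\mathcal{D}^{\perf}(A^{op})$, so $D$ and $D'$ are mutually inverse, and $\RHom_A(-,A)\colon \mathcal{D}^{\perf}(A) \to \mathcal{D}^{\perf}(A^{op})^{op}$ is an equivalence.

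The main obstacle is bookkeeping rather than conceptual: one must track the two commuting actions on $A$ carefully so that $\eta_M$ is genuinely a morphism of right $A$-modules (not merely of complexes of vector spaces) and is natural, since this naturality and exactness are exactly what make the locus of invertibility a thick subcategory. Once $\eta_A$ is identified with the obvious isomorphism, the dévissage along the defining triangles of $\thick(A)$ is automatic.
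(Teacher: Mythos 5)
Your proposal is correct, and it is a genuinely workable argument, but it decomposes the problem differently from the paper. The paper proves the statement by establishing full faithfulness and essential surjectivity separately: it considers the natural transformation $\RHom_A(M,N) \to \RHom_{A^{op}}(\RHom_A(N,A),\RHom_A(M,A))$, observes it is an equivalence at $N = A$, and notes that the objects $N$ on which it is invertible form a thick subcategory, hence contain all of $\thick(A) = \mathcal{D}^{\perf}(A)$; essential surjectivity then follows because the essential image is thick and contains $A \simeq \RHom_A(A,A)$. You instead exhibit the explicit quasi-inverse $\RHom_{A^{op}}(-,A)$ and verify that both biduality (evaluation) transformations $\mathrm{id} \to D'D$ and $\mathrm{id} \to DD'$ are invertible. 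Both arguments rest on exactly the same engine — a natural transformation of exact functors that is invertible at the generator $A$ is invertible on all of $\thick(A)$ — so neither is more or less rigorous; the difference is in what must be checked and what one gets out. Your route requires two applications of the dévissage (one per composite) plus the preliminary check that $\RHom_A(-,A)$ preserves perfection, and it demands the sign-sensitive verification that the evaluation map is a genuine morphism of right $A$-modules; in exchange it hands you the inverse functor explicitly, which is useful downstream (indeed the paper itself later needs the identification $\RHom_A(-,A)^\vee \simeq - \otimes_A^{\mathbb{L}} A^\vee$ in the Serre functor argument). The paper's route is shorter — one natural transformation and the image argument — but it yields the inverse only abstractly, and it quietly uses the fact that the essential image of a fully faithful exact functor out of an idempotent-complete triangulated category is thick, a point your argument does not need.
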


\begin{proof}
The natural transformation
\[
\RHom_A(M,N) \longrightarrow \RHom_{A^{op}}(\RHom_A(N,A),\RHom_A(M,A))
\]
is an equivalence at $N = A$ and the set of objects $N$ such that this natural transformation is an equivalence form a thick subcategory. Therefore it holds for all $N \in \mathcal{D}^{\perf}(A)$. Hence the functor is fully faithful. Its image contains $A \simeq \RHom_A(A,A)$ and so it is essentially surjective.  
\end{proof}

One justification for this definition is the relationship to Serre functors in the sense of \cite{BoKa}. This generalises a well known result for finite dimensional algebras. 

\begin{prop}
A proper DGA is Gorenstein if and only if $\mathcal{D}^{\perf}(A)$ admits a Serre functor. In this case the Serre functor is
\[
- \otimes_A^\mathbb{L} A^\vee: \mathcal{D}^{\perf}(A) \xrightarrow{\sim} \mathcal{D}^{\perf}(A)
\]

\end{prop}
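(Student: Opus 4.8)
The plan is to exhibit $-\otimes_A^{\mathbb{L}} A^\vee$ as the candidate Serre functor by producing an explicit bifunctorial Serre duality isomorphism, and then to observe that this functor is an autoequivalence of $\mathcal{D}^{\perf}(A)$ exactly when $A$ is Gorenstein. Since $A$ is proper we have $\mathcal{D}^{\perf}(A) \subseteq \mathcal{D}_{\cf}(A)$, so the graded Hom spaces are finite dimensional and the notion of a Serre functor is meaningful. The core step is to establish, for $X \in \mathcal{D}^{\perf}(A)$ and arbitrary $Y \in \mathcal{D}(A)$, a natural isomorphism $\RHom_A(Y, X \otimes_A^{\mathbb{L}} A^\vee) \simeq \RHom_A(X,Y)^\vee$. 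I would derive this from two identities valid for perfect $X$, each proved by checking the natural comparison map at $X = A$ and invoking that the objects where it is an isomorphism form a thick subcategory: first $X \otimes_A^{\mathbb{L}} A^\vee \simeq \RHom_A(X,A)^\vee$, and second $Y \otimes_A^{\mathbb{L}} \RHom_A(X,A) \simeq \RHom_A(X,Y)$. Setting $N := \RHom_A(X,A) \in \mathcal{D}(A^{op})$ and applying $\Ext$-$\Tor$ duality to the first identity gives $\RHom_A(Y, X \otimes_A^{\mathbb{L}} A^\vee) \simeq (Y \otimes_A^{\mathbb{L}} N)^\vee$, and the second identity rewrites this as $\RHom_A(X,Y)^\vee$. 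Passing to $H^0$ yields the bifunctorial isomorphism $\Hom(X,Y) \cong \Hom(Y, X\otimes_A^{\mathbb{L}} A^\vee)^\vee$ on $\mathcal{D}^{\perf}(A)$.

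Next I would record that $-\otimes_A^{\mathbb{L}} A^\vee$ always restricts to an equivalence $\mathcal{D}^{\perf}(A) \xrightarrow{\sim} \thick_A(A^\vee)$. By the first identity above it agrees with the composite of $\RHom_A(-,A)\colon \mathcal{D}^{\perf}(A) \to \mathcal{D}^{\perf}(A^{op})^{op}$ from Proposition \ref{otherduality} with the $k$-dual $(-)^\vee\colon \mathcal{D}_{\cf}(A^{op})^{op} \simeq \mathcal{D}_{\cf}(A)$; reasoning as in Equation (\ref{dualperfequiv}), this composite carries the essential image $\thick_{A^{op}}(A)^{op}$ onto $\thick_A(A^\vee)$, and is a composite of equivalences onto their images. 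For the ($\Leftarrow$) direction, if $A$ is Gorenstein then $\thick_A(A^\vee) = \mathcal{D}^{\perf}(A)$, so $-\otimes_A^{\mathbb{L}} A^\vee$ is an autoequivalence of $\mathcal{D}^{\perf}(A)$, and together with the isomorphism of the previous paragraph it is a Serre functor.

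For the ($\Rightarrow$) direction, suppose a Serre functor $S$ exists on $\mathcal{D}^{\perf}(A)$. Combining its defining isomorphism $\Hom(Y, SX) \cong \Hom(X,Y)^\vee$ with the one above gives $\Hom(Y, SX) \cong \Hom(Y, X \otimes_A^{\mathbb{L}} A^\vee)$, natural in $Y \in \mathcal{D}^{\perf}(A)$. Evaluating at $Y = SX$ at the identity produces a morphism $SX \to X \otimes_A^{\mathbb{L}} A^\vee$ in $\mathcal{D}(A)$ inducing this isomorphism; since $\mathcal{D}^{\perf}(A)$ consists of compact generators of $\mathcal{D}(A)$, the cone of this morphism receives no nonzero maps from any perfect object and hence vanishes, so $SX \simeq X \otimes_A^{\mathbb{L}} A^\vee$. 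Taking $X = A$ gives $A^\vee \simeq SA \in \mathcal{D}^{\perf}(A)$, whence $\thick_A(A^\vee) \subseteq \mathcal{D}^{\perf}(A)$; and essential surjectivity of the autoequivalence $S$ shows that every perfect object has the form $SX \simeq X \otimes_A^{\mathbb{L}} A^\vee \in \thick_A(A^\vee)$, giving the reverse inclusion. Thus $\mathcal{D}^{\perf}(A) = \thick_A(A^\vee)$ and $A$ is Gorenstein.

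I expect the main obstacle to be the careful bookkeeping of module sides and variances in establishing the natural isomorphism of the first paragraph, together with verifying that all the intermediate identifications are genuinely natural in both $X$ and $Y$, so that the resulting Serre duality isomorphism is bifunctorial as required. The Yoneda-style identification of $S$ with $-\otimes_A^{\mathbb{L}} A^\vee$ in the converse is conceptually routine but depends entirely on this naturality.
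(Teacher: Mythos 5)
Your proof is correct, and while your backward direction is essentially the paper's, your forward direction takes a genuinely different route. In the converse both you and the paper identify $-\otimes_A^{\mathbb{L}} A^\vee$ with $\RHom_A(-,A)^\vee$, use Proposition \ref{otherduality} (plus the Gorenstein hypothesis) to see it is an autoequivalence of $\mathcal{D}^{\perf}(A)$, and verify the Serre duality isomorphism by Ext--Tor duality together with thick-subcategory arguments reducing to $X=A$; your two identities $X\otimes_A^{\mathbb{L}}A^\vee \simeq \RHom_A(X,A)^\vee$ and $Y\otimes_A^{\mathbb{L}}\RHom_A(X,A)\simeq \RHom_A(X,Y)$ are exactly the paper's final display, reorganised. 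For the forward implication, however, the paper lifts the representability of $\Hom(M,-)^\vee$ to the DG level via Lemma 2.2 of \cite{KS22}, concludes $S(A)\simeq A^\vee$ by Yoneda inside the enhancement (giving $\thick_A(A^\vee)\subseteq \mathcal{D}^{\perf}(A)$), and then obtains the reverse inclusion $\mathcal{D}^{\perf}(A)\subseteq \thick_A(A^\vee)$ by passing to the opposite category, deducing $A^\vee\in\mathcal{D}^{\perf}(A^{op})$, and applying $(-)^\vee$. You avoid both of these devices: having established the bifunctorial isomorphism $\Hom(Y,X\otimes_A^{\mathbb{L}}A^\vee)\cong \Hom(X,Y)^\vee$ unconditionally, you compare it with the defining isomorphism of $S$, extract a comparison map $SX\to X\otimes_A^{\mathbb{L}}A^\vee$ by triangulated Yoneda, and kill its cone using that the perfect object $A$ compactly generates $\mathcal{D}(A)$; then $X=A$ gives one inclusion and essential surjectivity of $S$ (part of the Bondal--Kapranov definition) gives the other, since $SX\simeq X\otimes_A^{\mathbb{L}}A^\vee\in\thick_A(A^\vee)$ for perfect $X$. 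Your route is more self-contained (no DG-enhancement lemma, no detour through $A^{op}$) and it proves the identification $S\simeq -\otimes_A^{\mathbb{L}}A^\vee$ on all objects as part of the forward direction, which is precisely the "in this case" clause of the statement; the paper's route is shorter given the cited machinery, keeps the Yoneda argument inside the Hom-finite category $\mathcal{D}^{\perf}(A)$, and exploits the $A$ versus $A^{op}$ symmetry that structures the rest of Section \ref{GorandDual}.
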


\begin{proof}
Suppose $\mathcal{D}^{\perf}(A)$ admits a Serre functor $S: \mathcal{D}^{\perf}(A) \xrightarrow{\sim} \mathcal{D}^{\perf}(A)$. The functor 
\[
\Hom_{\mathcal{D}(A)}(M,-)^\vee: \mathcal{D}(A) \to \fdmod(k)^{op}
\]
is representable by $S(M)$. By Lemma 2.2, in \cite{KS22} this lifts to an equivalence  $\RHom_A(M,-)^\vee \simeq \RHom_A(-,S(M)) \in \mathcal{D}(\mathcal{D}^{\perf}(A))$.
Hence 
\[
\RHom_A(-,S(A)) \simeq \RHom_A(A,-)^\vee \simeq (-)^\vee \simeq \RHom_A(-,A^\vee)
\]
where the last equivalence follows from Ext-Tor duality. So by Yoneda, $A^\vee \simeq S(A) \in \mathcal{D}^{\perf}(A)$. It is clear that if $\mathcal{D}^{\perf}(A)$ admits a Serre functor then so does $\mathcal{D}^{\perf}(A)^{op}$. So one also has $A^\vee \in \mathcal{D}^{\perf}(A^{op})$. Then $\thick_A(A^\vee) \subseteq \mathcal{D}^{\perf}(A)$ and  $\thick_{A^{op}}(A^\vee) \subseteq \mathcal{D}^{\perf}(A^{op})$. Applying $(-)^\vee$ to the latter gives that $\mathcal{D}^{\perf}(A) \subseteq \thick_A(A^\vee)$. Hence $A$ is Gorenstein.
\\

Conversely by Propositions \ref{Gorensteinvskdual} and \ref{otherduality}, $\RHom(-,A)^\vee \simeq - \otimes_A A^\vee$ is an equivalence. For $M,N \in \mathcal{D}^{\perf}(A)$ there are natural equivalences
\[
\RHom_A(M,\RHom_A(N,A)^\vee)^\vee \simeq M \otimes_A \RHom_A(N,A) \simeq \RHom_A(N,M)
\]
where the last equivalence holds since it holds at $N = A$ and the class of objects for which it is an equivalence is thick. 
\end{proof}

This description of the Serre functor appears in \cite{shk07}. The following is a dual version of Corollary \ref{descperf}. 

\begin{prop} \label{dualperfdesc}
For a fd DGA $A$ such that $A/J$ is $k$-separable,
\[
\thick_A(A^\vee) = \{ M \in \mathcal{D}_{\cf}(A) \mid \RHom(A/J_-,M) \in \mathcal{D}^b(k) \}.
\]
\end{prop}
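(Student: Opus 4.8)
The plan is to identify the right-hand set as the $k$-dual of the description of $\mathcal{D}^{\perf}(A^{op})$ furnished by Corollary \ref{descperf}. Since a fd DGA is proper, $A^\vee \in \mathcal{D}_{\cf}(A)$ and hence $\thick_A(A^\vee) \subseteq \mathcal{D}_{\cf}(A)$; thus both sides of the claimed equality consist of cohomologically finite modules, and it suffices to compare the two defining conditions on such an $M$. Applying Equation (\ref{dualperfequiv}) with $A^{op}$ in place of $A$, the functor $(-)^\vee$ restricts to an equivalence $\mathcal{D}^{\perf}(A^{op})^{op} \xrightarrow{\sim} \thick_A(A^\vee)$. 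Combined with the biduality $M \simeq (M^\vee)^\vee$, which holds on $\mathcal{D}_{\cf}$, this shows that for $M \in \mathcal{D}_{\cf}(A)$ one has $M \in \thick_A(A^\vee)$ if and only if $N := M^\vee$ lies in $\mathcal{D}^{\perf}(A^{op})$.

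First I would translate the cohomological finiteness of $\RHom_A(A/J_-, M)$ into a condition on $N = M^\vee$ via Ext-Tor duality. Writing $M \simeq N^\vee$ and applying the duality proposition to the right $A$-module $A/J_-$ gives
\[
\RHom_A(A/J_-, M) \simeq \RHom_A(A/J_-, N^\vee) \simeq (A/J_- \otimes_A^{\mathbb{L}} N)^\vee.
\]
Because $J_-$ is a two-sided ideal, the radical construction is insensitive to passing to the opposite algebra, so $A/J_-$ viewed as a left $A$-module is precisely the module $A^{op}/J_-$ occurring in Corollary \ref{descperf} for $A^{op}$; the symmetry of the tensor product over opposite algebras then yields $A/J_- \otimes_A^{\mathbb{L}} N \simeq N \otimes_{A^{op}}^{\mathbb{L}} A^{op}/J_-$. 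Since $(-)^\vee$ preserves finite-dimensionality of total cohomology, I obtain
\[
\RHom_A(A/J_-, M) \in \mathcal{D}^b(k) \iff N \otimes_{A^{op}}^{\mathbb{L}} A^{op}/J_- \in \mathcal{D}^b(k).
\]

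Finally I would invoke Corollary \ref{descperf} for $A^{op}$, which is legitimate because $A^{op}/J(A^{op}) \cong (A/J)^{op}$ is again $k$-separable. This gives $N \otimes_{A^{op}}^{\mathbb{L}} A^{op}/J_- \in \mathcal{D}^b(k)$ if and only if $N \in \mathcal{D}^{\perf}(A^{op})$, which by the first paragraph is equivalent to $M \in \thick_A(A^\vee)$. Reading this chain of equivalences in both directions establishes the two inclusions. I expect the only genuine subtlety to be bookkeeping: keeping the left and right module structures straight and justifying the identification $A/J_- \otimes_A^{\mathbb{L}} N \simeq N \otimes_{A^{op}}^{\mathbb{L}} A^{op}/J_-$, together with verifying that the $k$-duality equivalence and Corollary \ref{descperf} are each being applied to the correct algebra.
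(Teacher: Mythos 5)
Your proof is correct and follows essentially the same route as the paper: apply Corollary \ref{descperf} to $A^{op}$, transport it through the equivalence $(-)^\vee$ of Equation (\ref{dualperfequiv}), and identify $\RHom_A(A/J_-, N^\vee) \simeq (A/J_- \otimes_A^{\mathbb{L}} N)^\vee$ via Ext--Tor duality and biduality on $\mathcal{D}_{\cf}$. The only difference is organisational --- you argue objectwise through the chain of equivalences and spell out the left/right bookkeeping (that $J_-$ is two-sided and $(A/J)^{op}$ remains $k$-separable) which the paper leaves implicit.
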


\begin{proof}
Corollary \ref{descperf} applied to $A^{op}$ implies
\[
\mathcal{D}^{\perf}(A^{op}) = \{ M \in \mathcal{D}_{\cf}(A^{op}) \mid A/J_- \otimes_A^\mathbb{L} M \in \mathcal{D}^b(k) \} . 
\]
Applying the equivalence $(-)^\vee$ gives 
\[
\thick_A(A^\vee) = \{ M^\vee \in \mathcal{D}_{\cf}(A) \mid A/J_- \otimes_A^\mathbb{L} M \in \mathcal{D}^b(k) \}.
\]
This can be rewritten using the equivalence $(-)^\vee$ as
\[
\thick_A(A^\vee) = \{ N \in \mathcal{D}_{\cf}(A) \mid A/J_- \otimes_A^\mathbb{L} N^\vee \in \mathcal{D}^b(k) \}.
\]
Now, $A/J_- \otimes_A^\mathbb{L} N^\vee \in \mathcal{D}^b(k)$ if and only if $(A/J_- \otimes_A^\mathbb{L} N^\vee)^\vee \in \mathcal{D}^b(k)$. Hence the result follows since
\[
(A/J_- \otimes_A^\mathbb{L} N^\vee)^\vee \simeq \RHom_A(A/J_-, N^{\vee\vee}) \simeq  \RHom_A(A/J_-, N). \qedhere
\]
\end{proof}

We deduce that reflecting perfection for $\RHom(A/J_-,-)$ is related to being Gorenstein.

\begin{cor} \label{Gorcondit}
Let $A$ be a fd DGA such that $A/J$ is $k$-separable. Then $A$ is Gorenstein if and only if both of the functors
\begin{align*}
\RHom_A(A/J_-,-) :& \mathcal{D}_{\cf}(A) \to \mathcal{D}(A/J_-) \\
\RHom_{A^{op}}(A/J_-,-) :& \mathcal{D}_{\cf}(A^{op}) \to \mathcal{D}(A^{op}/J_-) 
\end{align*}
reflect perfection.
\end{cor}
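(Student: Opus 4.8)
The plan is to translate each of the two reflecting-perfection conditions into a single inclusion of thick subcategories, and then to recognise the conjunction of these inclusions as the Gorenstein equality $\mathcal{D}^{\perf}(A) = \thick_A(A^\vee)$.

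First I would unwind what it means for $\RHom_A(A/J_-,-)$ to reflect perfection. Since $A/J_- \simeq A/J_+$ is a semisimple DGA by Theorem \ref{semisimple}, perfection over $A/J_-$ coincides with having finite dimensional total cohomology, so $\mathcal{D}^{\perf}(A/J_-)$ consists exactly of the objects lying in $\mathcal{D}^b(k)$. Thus the functor reflects perfection precisely when every $M \in \mathcal{D}_{\cf}(A)$ with $\RHom_A(A/J_-,M) \in \mathcal{D}^b(k)$ already lies in $\mathcal{D}^{\perf}(A)$. By Proposition \ref{dualperfdesc} the collection of such $M$ is exactly $\thick_A(A^\vee)$, so the first functor reflects perfection if and only if $\thick_A(A^\vee) \subseteq \mathcal{D}^{\perf}(A)$. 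Since $A/J$ being $k$-separable forces the same for $A^{op}$, the identical argument applied to $A^{op}$ shows that the second functor reflects perfection if and only if $\thick_{A^{op}}(A^\vee) \subseteq \mathcal{D}^{\perf}(A^{op})$.

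Next I would transport the second inclusion back to $A$ using the duality equivalence recorded in the proof of Proposition \ref{Gorensteinvskdual}: the functor $(-)^\vee \colon \mathcal{D}_{\cf}(A^{op}) \xrightarrow{\sim} \mathcal{D}_{\cf}(A)^{op}$ carries $\mathcal{D}^{\perf}(A^{op})$ onto $\thick_A(A^\vee)$ and $\thick_{A^{op}}(A^\vee)$ onto $\mathcal{D}^{\perf}(A)$. As an equivalence preserves subcategory inclusions, the condition $\thick_{A^{op}}(A^\vee) \subseteq \mathcal{D}^{\perf}(A^{op})$ is equivalent to $\mathcal{D}^{\perf}(A) \subseteq \thick_A(A^\vee)$. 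Combining the two translated statements, both functors reflect perfection if and only if $\thick_A(A^\vee) \subseteq \mathcal{D}^{\perf}(A)$ and $\mathcal{D}^{\perf}(A) \subseteq \thick_A(A^\vee)$ hold simultaneously, i.e. $\mathcal{D}^{\perf}(A) = \thick_A(A^\vee)$, which is the definition of $A$ being Gorenstein.

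The only genuinely delicate point is the first step: justifying that perfection over the semisimple quotient $A/J_-$ is the same as membership in $\mathcal{D}^b(k)$, so that Proposition \ref{dualperfdesc} applies verbatim. Beyond this, the argument is bookkeeping; the essential feature to keep track of is which inclusion each functor produces, and that neither inclusion may be assumed a priori — it is precisely the need for \emph{both} inclusions that forces the use of both functors, one for $A$ and one for $A^{op}$.
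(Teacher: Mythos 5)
Your proposal is correct and follows essentially the same route as the paper: translate each reflection condition via Proposition \ref{dualperfdesc} into the inclusions $\thick_A(A^\vee) \subseteq \mathcal{D}^{\perf}(A)$ and $\thick_{A^{op}}(A^\vee) \subseteq \mathcal{D}^{\perf}(A^{op})$, transport the latter through $(-)^\vee$ to get $\mathcal{D}^{\perf}(A) \subseteq \thick_A(A^\vee)$, and combine. Your write-up is if anything slightly more careful than the paper's, since you make explicit both the biconditional structure and the identification of perfection over the semisimple quotient $A/J_-$ with membership in $\mathcal{D}^b(k)$, which the paper leaves implicit.
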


\begin{proof}
Proposition \ref{dualperfdesc} applied to $A$ and $A^{op}$ implies that $\thick_A(A^\vee) \subseteq \mathcal{D}^{\perf}(A)$ and $\thick_{A^{op}}(A^\vee) \subseteq \mathcal{D}^{\perf}(A^{op})$. So applying $(-)^\vee$ to the latter gives that $\mathcal{D}^{\perf}(A) \subseteq \thick_A(A^\vee)$. Hence $A$ is Gorenstein. If $A$ is Gorenstein then so is $A^{op}$. Then the result follows from Proposition \ref{dualperfdesc} applied to $A$ and $A^{op}$ respectively.
\end{proof}

\begin{cor}
Let $A$ be a fd DGA such that $A/J$ is $k$-separable. Then both $\RHom_A(A/J_-,A)$ and $\RHom_{A^{op}}(A/J_-,A)$ have finite dimensional cohomology if and only if $A$ is Gorenstein. 
\end{cor}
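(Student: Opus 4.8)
The plan is to convert each cohomological finiteness hypothesis into a membership statement in $\thick_A(A^\vee)$ by means of Proposition \ref{dualperfdesc}, and then to repeat the $k$-duality argument from the proof of Corollary \ref{Gorcondit}. Since a fd DGA is in particular proper, we have $A \in \mathcal{D}_{\cf}(A)$, so Proposition \ref{dualperfdesc} (taking $M = A$) says exactly that $\RHom_A(A/J_-,A) \in \mathcal{D}^b(k)$ if and only if $A \in \thick_A(A^\vee)$; applying the same proposition to $A^{op}$ gives that $\RHom_{A^{op}}(A/J_-,A) \in \mathcal{D}^b(k)$ if and only if $A \in \thick_{A^{op}}(A^\vee)$.

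For the forward implication I would assume $A$ Gorenstein, so that $\mathcal{D}^{\perf}(A) = \thick_A(A^\vee)$ and in particular $A \in \thick_A(A^\vee)$; the reformulation above then yields $\RHom_A(A/J_-,A) \in \mathcal{D}^b(k)$. As $A$ is Gorenstein precisely when $A^{op}$ is (recorded in the proof of Proposition \ref{Gorensteinvskdual}), the same reasoning for $A^{op}$ delivers $\RHom_{A^{op}}(A/J_-,A) \in \mathcal{D}^b(k)$. For the converse I would feed the two finiteness hypotheses through the reformulation to obtain $A \in \thick_A(A^\vee)$ and $A \in \thick_{A^{op}}(A^\vee)$; since these subcategories are thick, they upgrade to $\mathcal{D}^{\perf}(A) = \thick_A(A) \subseteq \thick_A(A^\vee)$ and $\mathcal{D}^{\perf}(A^{op}) = \thick_{A^{op}}(A) \subseteq \thick_{A^{op}}(A^\vee)$. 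Applying the equivalence $(-)^\vee \colon \mathcal{D}_{\cf}(A^{op})^{op} \simeq \mathcal{D}_{\cf}(A)$ to the second inclusion, and using biduality $A^{\vee\vee} \simeq A$ to see that it carries $\thick_{A^{op}}(A^\vee)$ to $\thick_A(A)$ and $\mathcal{D}^{\perf}(A^{op})$ to $\thick_A(A^\vee)$, turns this inclusion into $\thick_A(A^\vee) \subseteq \mathcal{D}^{\perf}(A)$. Together with the first inclusion this gives $\mathcal{D}^{\perf}(A) = \thick_A(A^\vee)$, i.e.\ $A$ is Gorenstein.

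The step I would check most carefully is the bookkeeping for how the contravariant equivalence $(-)^\vee$ permutes the thick subcategories generated by $A$ and $A^\vee$ over $A$ and $A^{op}$. This is the only place the argument could slip, and it relies on the biduality isomorphism, which is available exactly because $A$ is proper; it is the same manipulation that underlies Proposition \ref{Gorensteinvskdual} and Corollary \ref{Gorcondit}, so no new difficulty is expected.
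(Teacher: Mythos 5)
Your proof is correct and follows essentially the same route as the paper's: both directions come from specialising Proposition \ref{dualperfdesc} at $M = A$ (over $A$ and over $A^{op}$) and then running the $(-)^\vee$ bookkeeping of Corollary \ref{Gorcondit}. In fact your write-up is the more careful one, since the paper's own proof contains typos — it writes $\RHom_A(A,A/J_-)$ for $\RHom_A(A/J_-,A)$, writes $\mathcal{D}^{\perf}(A^\vee)$ for $\mathcal{D}^{\perf}(A^{op})$, and ends with the tautology $\thick_A(A) \subseteq \mathcal{D}^{\perf}(A)$ where the intended conclusion is $\thick_A(A^\vee) \subseteq \mathcal{D}^{\perf}(A)$, exactly as you derived it.
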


\begin{proof}
Suppose $A$ is Gorenstein, then so is $A^{op}$. By Proposition \ref{dualperfdesc}, $\RHom_A(A,A/J_-)$ and $\RHom_{A^{op}}(A,A/J_-)$ are in $\mathcal{D}^b(k)$. Conversely, it follows from Proposition \ref{dualperfdesc} that $A \in \thick_{A}(A^\vee)$ and $A \in \thick_{A^{op}}(A^\vee)$. So $\mathcal{D}^{\perf}(A) \subseteq \thick_A(A^\vee)$ and $\mathcal{D}^{\perf}(A^\vee) \subseteq \thick_{A^{op}}(A^\vee)$. Applying $(-)^\vee$ to the latter gives that $\thick_A(A) \subseteq \mathcal{D}^{\perf}(A)$. 
\end{proof}

\section{The Contradual version and Koszul Duality}

We prove the functor $\RHom_A(-,A/J_-)$ reflects perfection and use it to study the Koszul dual. 

\begin{theorem} \label{descperfdual2}
Let $A$ be a fd DGA such that $A/J$ is $k$-separable then 
\[
\mathcal{D}^{\perf}(A) = \{ X \in \mathcal{D}(A) \mid \RHom_A(X,A/J_-) \in \mathcal{D}^b(k) \}.
\]
\end{theorem}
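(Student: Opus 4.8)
The plan is to prove both inclusions, using Ext-Tor duality to convert the $\RHom$-condition into a $\otimes^{\mathbb{L}}$-condition and then to invoke Corollary \ref{descperf}.

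The inclusion $\subseteq$ is the easy direction. The assignment $X \mapsto \RHom_A(X,A/J_-)$ is a (contravariant) exact functor $\mathcal{D}(A)^{op} \to \mathcal{D}(k)$, so the right-hand side is the preimage of the thick subcategory $\mathcal{D}^b(k)$ and is therefore itself thick. Since $\RHom_A(A,A/J_-) \simeq A/J_-$ is strictly finite dimensional, the right-hand side contains $A$, hence contains $\thick(A) = \mathcal{D}^{\perf}(A)$.

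For the reverse inclusion I would first rewrite the hypothesis. Viewing $A/J_-$ as a bimodule, its $k$-dual $(A/J_-)^\vee$ is an object of $\mathcal{D}(A^{op})$ and $(A/J_-)^{\vee\vee} \simeq A/J_-$ because $A/J_-$ is strictly finite dimensional. Ext-Tor duality then gives
\[
\RHom_A(X,A/J_-) \simeq \RHom_A\bigl(X,(A/J_-)^{\vee\vee}\bigr) \simeq \bigl( X \otimes_A^{\mathbb{L}} (A/J_-)^\vee \bigr)^\vee .
\]
Since $(-)^\vee$ restricts to an equivalence $\mathcal{D}_{\cf}(k)^{op} \simeq \mathcal{D}_{\cf}(k)$, it preserves and reflects finite-dimensional total cohomology, so the hypothesis $\RHom_A(X,A/J_-) \in \mathcal{D}^b(k)$ is equivalent to $X \otimes_A^{\mathbb{L}} (A/J_-)^\vee \in \mathcal{D}^b(k)$. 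Next I would pass from $(A/J_-)^\vee$ to $A/J_-$ inside the tensor. Consider the thick subcategory $\mathcal{C} \coloneq \{ N \in \mathcal{D}(A^{op}) \mid X \otimes_A^{\mathbb{L}} N \in \mathcal{D}^b(k) \}$, which contains $(A/J_-)^\vee$. The key point is that $(A/J_-)^\vee$ generates all of $\mathcal{D}_{\f}(A^{op})$: the equivalence $(-)^\vee \colon \mathcal{D}_{\f}(A) \simeq \mathcal{D}_{\f}(A^{op})^{op}$ carries $\thick(A/J_-) = \mathcal{D}_{\f}(A)$ (Theorem \ref{radfilt}) onto $\thick\bigl((A/J_-)^\vee\bigr)$, whence $\thick\bigl((A/J_-)^\vee\bigr) = \mathcal{D}_{\f}(A^{op})$. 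In particular the left module $A/J_-$ lies in $\mathcal{D}_{\f}(A^{op}) \subseteq \mathcal{C}$, so $X \otimes_A^{\mathbb{L}} A/J_- \in \mathcal{D}^b(k)$, and Corollary \ref{descperf} yields $X \in \mathcal{D}^{\perf}(A)$.

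The main obstacle is bookkeeping rather than depth: one must track the left/right module structures carefully so that Ext-Tor duality applies with the correct variances, and verify that $A/J_-$ and its $k$-dual generate the same thick subcategory $\mathcal{D}_{\f}(A^{op})$. The latter is exactly where Theorem \ref{radfilt}, combined with the restriction of $(-)^\vee$ to an anti-equivalence on $\mathcal{D}_{\f}$, does the essential work, reducing the contradual statement to the already-established Corollary \ref{descperf}.
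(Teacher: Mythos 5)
Your proof is correct and takes essentially the same route as the paper: Ext--Tor duality converts the hypothesis into $X \otimes_A^{\mathbb{L}} (A/J_-)^\vee \in \mathcal{D}^b(k)$, and the equivalence $(-)^\vee\colon \mathcal{D}_{\f}(A) \simeq \mathcal{D}_{\f}(A^{op})^{op}$ together with Theorem \ref{radfilt} shows that $(A/J_-)^\vee$ generates $\mathcal{D}_{\f}(A^{op})$, so the thick subcategory $\mathcal{C}$ contains all of $\mathcal{D}_{\f}(A^{op})$. The only cosmetic difference is the last step: you pass from $(A/J_-)^\vee$ to the left module $A/J_-$ and invoke Corollary \ref{descperf}, whereas the paper passes to the module $S$ of Lemma \ref{keylemma} and invokes that lemma directly --- these are interchangeable, since Corollary \ref{descperf} is itself deduced from Lemma \ref{keylemma} by the same thick-subcategory argument.
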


\begin{proof}
By Ext-Tor duality, 
\[
\RHom_A(-,A/J_-) \simeq \RHom_A(-,(A/J_-)^{\vee\vee}) \simeq (- \otimes_A^{\mathbb{L}} A/J_-^\vee)^\vee
\]
and so to prove the claim we must show that $- \otimes_A (A/J_-)^\vee$ preserves and reflects perfection. It preserves it since $(A/J_-)^\vee \in \mathcal{D}^b(k)$. For reflection note that $A/J_-^\vee$ generates $\mathcal{D}_{\f}(A^{op})$. Indeed  $A/J_-$ generates $\mathcal{D}_{\f}(A)$ and  $(-)^\vee: \mathcal{D}_{\f}(A) \simeq \mathcal{D}_{\f}(A^{op})^{op}$ is an equivalence. So the module $S$ of Lemma \ref{keylemma} is contained in $\thick_A(A/J_-^\vee) = \mathcal{D}_{\f}(A)$. As in the proof of Theorem \ref{reflperf}, $M \otimes_A^\mathbb{L} A/J_-^\vee \in \mathcal{D}^b(k)$ implies that $M \otimes_A^{\mathbb{L}} S \in \mathcal{D}^b(k)$. So Lemma \ref{keylemma} implies $M \in \mathcal{D}^{\perf}(A)$. 
\end{proof}

Consider the following definition from Section 4.1 of \cite{KS22}.

\begin{defn} \label{defnlhfrhf}
Let $\mathcal{T}$ be a $k$-linear triangulated category, then 
\begin{align*}
\mathcal{T}^{\lhf} &:= \{ M \in \mathcal{T} \mid \bigoplus_i \Hom_{\mathcal{T}}(M,\Sigma^i N) \text{ finite dimensional for all } N \in \mathcal{T} \} \\
\mathcal{T}^{\rhf} &:= \{ N \in \mathcal{T} \mid \bigoplus_i \Hom_{\mathcal{T}}(M,\Sigma^i N) \text{ finite dimensional for all } M \in \mathcal{T} \} 
\end{align*}
\end{defn}

We can identify these for fd DGAs.

\begin{prop} \label{lhfandrhf}
Let $A$ be a fd DGA such that $A/J$ is $k$-separable. Then
\[
\mathcal{D}_{\f}(A)^{\lhf} = \mathcal{D}_{\cf}(A)^{\lhf} = \mathcal{D}^{\perf}(A) \quad 
\mathcal{D}_{\f}(A)^{\rhf} = \mathcal{D}_{\cf}(A)^{\rhf} = \thick_A(A^\vee)
\]
\end{prop}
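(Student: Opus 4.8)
The plan is to establish all four equalities at once by exploiting the symmetry between the two columns together with the fact that both candidate answers, $\mathcal{D}^{\perf}(A)$ and $\thick_A(A^\vee)$, already sit inside the smallest category $\mathcal{D}_{\f}(A)$. Indeed $A$ and $A^\vee = \Hom_k(A,k)$ are strictly finite dimensional, so $A, A^\vee \in \mathcal{D}_{\f}(A)$, and since $\mathcal{D}_{\f}(A)$ is thick we get $\mathcal{D}^{\perf}(A) = \thick_A(A) \subseteq \mathcal{D}_{\f}(A)$ and $\thick_A(A^\vee) \subseteq \mathcal{D}_{\f}(A) \subseteq \mathcal{D}_{\cf}(A)$. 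The test object $A/J_-$ is likewise strictly finite dimensional, hence lies in $\mathcal{D}_{\f}(A)$. Consequently the ``hard'' inclusions below, which only ever test against $A/J_-$, go through verbatim whether the ambient category is $\mathcal{D}_{\f}(A)$ or $\mathcal{D}_{\cf}(A)$, while the ``easy'' inclusions produce objects already contained in $\mathcal{D}_{\f}(A)$. This coincidence is exactly why enlarging $\mathcal{D}_{\f}(A)$ to $\mathcal{D}_{\cf}(A)$ leaves both invariants unchanged, and it is the one point of the argument that requires care.

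For the $\lhf$ column I would write $\mathcal{T}$ for either $\mathcal{D}_{\f}(A)$ or $\mathcal{D}_{\cf}(A)$, noting $\mathcal{T} \subseteq \mathcal{D}_{\cf}(A)$ in both cases. To see $\mathcal{D}^{\perf}(A) \subseteq \mathcal{T}^{\lhf}$, observe that $\{ M \mid \RHom_A(M,N) \in \mathcal{D}^b(k) \text{ for all } N \in \mathcal{T} \}$ is thick, being an intersection over $N$ of preimages of the thick subcategory $\mathcal{D}^b(k)$ under the exact functors $\RHom_A(-,N)$, and contains $A$ since $\RHom_A(A,N) \simeq N \in \mathcal{D}_{\cf}(A)$; hence it contains $\thick_A(A) = \mathcal{D}^{\perf}(A)$. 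For the reverse inclusion, given $M \in \mathcal{T}^{\lhf}$ I would test against $N = A/J_- \in \mathcal{T}$ to obtain $\RHom_A(M, A/J_-) \in \mathcal{D}^b(k)$, whence $M \in \mathcal{D}^{\perf}(A)$ by Theorem \ref{descperfdual2}.

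The $\rhf$ column is handled dually, now using $A/J_-$ as source rather than target. For $\thick_A(A^\vee) \subseteq \mathcal{T}^{\rhf}$, Ext-Tor duality gives $\RHom_A(M, A^\vee) \simeq (M \otimes_A^{\mathbb{L}} A)^\vee \simeq M^\vee$, which lies in $\mathcal{D}^b(k)$ for every $M \in \mathcal{D}_{\cf}(A)$; since $\{ N \mid \RHom_A(M,N) \in \mathcal{D}^b(k) \text{ for all } M \in \mathcal{T} \}$ is thick and contains $A^\vee$, it contains $\thick_A(A^\vee)$. For the reverse inclusion, given $N \in \mathcal{T}^{\rhf}$ I would test against $M = A/J_- \in \mathcal{T}$ to get $\RHom_A(A/J_-, N) \in \mathcal{D}^b(k)$; as $N \in \mathcal{T} \subseteq \mathcal{D}_{\cf}(A)$, Proposition \ref{dualperfdesc} then yields $N \in \thick_A(A^\vee)$. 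Combining the two columns gives all four equalities. The work has effectively been front-loaded into Theorem \ref{descperfdual2} and Proposition \ref{dualperfdesc}, so no genuine obstacle remains beyond the bookkeeping that identifies the $\mathcal{D}_{\f}$ and $\mathcal{D}_{\cf}$ versions.
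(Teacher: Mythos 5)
Your proof is correct, and its $\lhf$ half is exactly the paper's argument: for $\mathcal{T} = \mathcal{D}_{\f}(A)$ or $\mathcal{D}_{\cf}(A)$, the subcategory $\mathcal{T}^{\lhf}$ is thick and contains $A$, hence contains $\mathcal{D}^{\perf}(A)$, while testing against $A/J_- \in \mathcal{T}$ and invoking Theorem \ref{descperfdual2} gives the reverse inclusion. The difference is in the $\rhf$ half. The paper disposes of it in one line, saying it ``follows from the equivalence $(-)^\vee$'': implicitly, one uses that $(-)^\vee$ identifies $\mathcal{D}_{\f}(A)$ and $\mathcal{D}_{\cf}(A)$ with the opposite categories over $A^{op}$, thereby exchanging $\rhf$ with $\lhf$, applies the first half to $A^{op}$ (which needs separability of $(A/J)^{op}$), and notes that $(-)^\vee$ carries $\thick_{A^{op}}(A^{op}) = \mathcal{D}^{\perf}(A^{op})$ to $\thick_A(A^\vee)$. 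You instead run the mirror-image two-inclusion argument directly over $A$: Ext--Tor duality ($\RHom_A(M,A^\vee) \simeq M^\vee$) gives the easy inclusion $\thick_A(A^\vee) \subseteq \mathcal{T}^{\rhf}$, and Proposition \ref{dualperfdesc} gives the hard one. Since Proposition \ref{dualperfdesc} is itself obtained in the paper by dualising Corollary \ref{descperf}, the two deductions rest on the same mathematics, just with the dualisation performed at a different stage; your version has the mild advantages of never passing to $A^{op}$ and of making explicit a point the paper leaves implicit, namely that $A$, $A^\vee$ and $A/J_-$ are strictly finite dimensional and therefore lie in $\mathcal{T}$, which is what legitimises both the inclusions $\mathcal{D}^{\perf}(A),\thick_A(A^\vee) \subseteq \mathcal{T}$ (required since $\mathcal{T}^{\lhf}$ and $\mathcal{T}^{\rhf}$ are by definition subcategories of $\mathcal{T}$) and the use of $A/J_-$ as a test object.
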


\begin{proof} 
For $\mathcal{T} = \mathcal{D}_{\f}(A)$ or $\mathcal{D}_{\cf}(A)$, $\mathcal{T}^{\lhf}$ is a thick subcategory containing $A$ so $\mathcal{D}^{\perf}(A) \subseteq \mathcal{T}^{\lhf}$. Conversely, if $\RHom_A(M,N) \in \mathcal{D}^b(k)$ for all $N \in \mathcal{T}$. Then $\RHom_A(M,A/J_-) \in \mathcal{D}^b(k)$ so by Theorem \ref{descperfdual2}, $M \in \mathcal{D}^{\perf}(A)$. The second result then follows from the equivalence $(-)^\vee$.
\end{proof}

\begin{remark}
This result is stated in 6.9 ii) \cite{KS22} for proper connective DGAs but their methods (relating $\mathcal{D}_{\cf}(A)^{\lhf}$ to $\mathcal{D}_{\cf}(\Aus(A))^{\lhf}$) work for all fd DGAs and provides a different proof of Proposition \ref{lhfandrhf}. The proof of Proposition 6.9 iii) in loc.\ cit.\ does not generalise to prove Theorem \ref{corepresent} as it requires smoothness of $\mathcal{D}_{\f}(A)$. This is not always the case as in Example \ref{powerseriesex}.
\end{remark}

\begin{remark}
Since Definition \ref{defnlhfrhf} is invariant under equivalence we deduce that if $\mathcal{D}_{\f}(A) \simeq \mathcal{D}_{\f}(B)$ then $\mathcal{D}(A) \simeq \mathcal{D}(B)$. 
\end{remark}

We have seen that any fd DGA has a semisimple augmentation $A \to A/J_-$ and $A/J_-$ generates $\mathcal{D}_{\f}(A)$. This sets up a version of Koszul duality for fd DGAs.

\begin{defn}
The Koszul dual of a fd DGA $A$ is the DGA 
\[
A^! := \RHom_A(A/J_-,A/J_-).
\]
\end{defn}

We will be more interested in $(A^!)^{op}$ since it is the contravariant functor $\RHom(-,A/J_-)$ which reflects perfection.

\begin{lemma} \label{Koszuldual1}
Let $A$ be a fd DGA, then the following diagram of equivalences commutes up to quasi-isomorphism.

\[
\begin{tikzcd}[row sep=normal, column sep = huge]
\mathcal{D}_{\f}(A) \arrow{r}{\RHom_A(A/J_-,-)}[swap]{\sim} \arrow{d}{(-)^\vee}[swap]{\wr} & \mathcal{D}^{\perf}(A^!) \arrow{d}{\RHom_{A^!}(-,A^!)}[swap]{\wr} \\
\mathcal{D}_{\f}(A^{op})^{op} \arrow{r}{\sim}[swap]{\RHom_A((-)^\vee,A/J_-)} & \mathcal{D}^{\perf}((A^!)^{op})^{op}
\end{tikzcd}
\]
\end{lemma}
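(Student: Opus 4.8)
The plan is to treat the square as three equivalences that are already available plus a single commutativity statement, and then to deduce the fourth equivalence formally. The left vertical map is the equivalence $(-)^\vee \colon \mathcal{D}_{\f}(A) \xrightarrow{\sim} \mathcal{D}_{\f}(A^{op})^{op}$ recorded at the start of Section \ref{GorandDual}, and the right vertical map is precisely Proposition \ref{otherduality} applied to the DGA $A^!$. So the genuine work splits into: (i) showing the top map is an equivalence, and (ii) checking the square commutes up to a natural quasi-isomorphism. Granting these, the bottom map equals the composite $(\text{right}) \circ (\text{top}) \circ (\text{left})^{-1}$ of equivalences, hence is itself an equivalence, and the commutativity identifies it with $\RHom_A((-)^\vee, A/J_-)$.

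For (i) I would invoke the standard dg-Morita argument for the generator $A/J_-$. By Theorem \ref{radfilt} we have $\mathcal{D}_{\f}(A) = \thick(A/J_-)$, and by definition $A^! = \RHom_A(A/J_-, A/J_-)$, so $F := \RHom_A(A/J_-,-)$ sends the generator $A/J_-$ to the free module $A^!$. Full faithfulness of $F$ on $\mathcal{D}_{\f}(A)$ follows from the usual comparison argument: the canonical map
\[
\RHom_A(M,N) \longrightarrow \RHom_{A^!}(FM, FN)
\]
is a quasi-isomorphism when $M = N = A/J_-$ (both sides are $A^!$), and the objects $M$ (resp.\ $N$) for which it is a quasi-isomorphism form a thick subcategory, so it is a quasi-isomorphism for all $M,N \in \thick(A/J_-) = \mathcal{D}_{\f}(A)$. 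Since $F$ is exact and fully faithful and $\mathcal{D}_{\f}(A)$ is idempotent complete, its essential image is a thick subcategory containing $F(A/J_-) = A^!$, hence equals $\thick(A^!) = \mathcal{D}^{\perf}(A^!)$; thus $F$ is an equivalence.

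For (ii) the key observation is that the required commutativity isomorphism is itself the comparison map for $F$. Tracing the square, the right-then-down composite sends $M$ to $\RHom_{A^!}(\RHom_A(A/J_-,M), A^!)$, while the down-then-right composite sends $M$ to $\RHom_A(M^{\vee\vee}, A/J_-) \simeq \RHom_A(M, A/J_-)$, using biduality $M^{\vee\vee} \simeq M$ for $M \in \mathcal{D}_{\cf}(A)$. The comparison map of the dg functor $F$ at the pair $(M, A/J_-)$ is exactly
\[
\RHom_A(M, A/J_-) \longrightarrow \RHom_{A^!}(FM, F(A/J_-)) = \RHom_{A^!}(\RHom_A(A/J_-,M), A^!),
\]
which is natural in $M$ and, by the full faithfulness established in (i), a quasi-isomorphism. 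This furnishes the commuting natural isomorphism.

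The step requiring the most care — and the main obstacle — is the bookkeeping of the module structures: one must verify that this comparison map is a map of left $A^!$-modules (equivalently right $(A^!)^{op}$-modules), so that it lives in $\mathcal{D}((A^!)^{op})$ rather than merely in $\mathcal{D}(k)$. Here $A^!$ acts on $\RHom_A(M, A/J_-)$ by post-composition on the target $A/J_-$, and on $\RHom_{A^!}(FM, A^!)$ through the bimodule structure of the target $A^!$; the functoriality $F(g \circ f) = F(g) \circ F(f)$ for $g \in \End_A(A/J_-) = A^!$ is what identifies these two actions. Keeping the handedness straight, together with the enhancement-level assertion that $F$ is an honest dg functor so that its comparison maps are genuine morphisms, is the only delicate point; the remainder is formal.
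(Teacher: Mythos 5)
Your proof is correct, and it shares the paper's key ingredients: the Morita-type equivalence generated by $A/J_-$ (via Theorem \ref{radfilt}), Proposition \ref{otherduality} for the right vertical arrow, and a comparison-map quasi-isomorphism to obtain commutativity. Where you differ is in the logical decomposition. The paper treats all four arrows as equivalences first: for the bottom one it argues directly, noting that $(-)^\vee$ induces a quasi-isomorphism of DGAs $\RHom_{A^{op}}((A/J_-)^\vee,(A/J_-)^\vee) \simeq (A^!)^{op}$, that $(A/J_-)^\vee$ generates $\mathcal{D}_{\f}(A^{op})$, and that $\RHom_{A^{op}}((A/J_-)^\vee,-) \simeq \RHom_A((-)^\vee,A/J_-)$, so the bottom arrow is itself a Morita equivalence over $A^{op}$; commutativity is then verified using the quasi-isomorphism $\RHom_A(A/J_-,-) \to \RHom_{(A^!)^{op}}(\RHom_A(-,A/J_-),A^!)$, which is the mirror image of your comparison map $\RHom_A(-,A/J_-) \to \RHom_{A^!}(\RHom_A(A/J_-,-),A^!)$. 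You instead prove only the top arrow and the commutativity, and deduce the bottom equivalence formally as $(\text{right}) \circ (\text{top}) \circ (\text{left})^{-1}$. Your route spares you the identification of $(A^!)^{op}$ with the Koszul dual of $A^{op}$, at the cost of a less intrinsic description of the bottom functor; the paper's route makes the bottom arrow an equivalence in its own right, independent of the commutativity check. Both versions hinge on the same thick-subcategory argument, and your care over the left $A^!$-module structure of the comparison map (via $F(a \circ f) = F(a) \circ F(f)$, with $F(a)$ acting as left multiplication by $a$ on $A^!$) makes explicit a point the paper leaves implicit.
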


\begin{proof} The top equivalence follows from Theorem \ref{radfilt}. The right vertical functor is an equivalence by Proposition \ref{otherduality}. For the bottom equivalence recall that $A/J_-^\vee$ generates $\mathcal{D}_{\f}(A^{op})$ as in the proof of Theorem \ref{descperfdual2}. Also $(-)^\vee$ induces an quasi-isomorphism of DGAs
\[
\RHom_{A^{op}}(A/J_-^{\vee},A/J_-^{\vee}) \simeq \RHom_A(A/J_-,A/J_-)^{op} = (A^!)^{op}
\]
Then since $\RHom_{A^{op}}((A/J_-)^\vee, -) \simeq \RHom_{A}((-)^\vee, A/J_-)$, the bottom functor is an equivalence. To see that the diagram commutes note that the composite of the bottom and left vertical functors is $\RHom_A(-,A/J_-)$. It induces a quasi-isomorphism
\[
\RHom_A(A/J_-,-) \longrightarrow \RHom_{(A^!)^{op}}(\RHom_A(-,A/J_-),A^!)
\]
This says exactly that the top map is equivalent to composing the left, bottom and right maps. Therefore the diagram commutes.
\end{proof}

By restricting this equivalence we can describe $\mathcal{D}^{\perf}(A)$ in terms of the Koszul dual. For this purpose we introduce the following notation.

\begin{defn}
If $A$ is a DGA let $\mathcal{D}^{\perf}_{\cf}(A) := \mathcal{D}^{\perf}(A) \cap \mathcal{D}_{\cf}(A)$.
\end{defn}

\begin{remark} \label{kronecker} \
\begin{enumerate} 

\item If $K$ is a proper DGA then $\mathcal{D}^{\perf}_{\cf}(K) = \mathcal{D}^{\perf}(K)$ and if $K$ is smooth then $\mathcal{D}^{\perf}_{\cf}(K) = \mathcal{D}_{\cf}(K)$. Indeed this follows from Lemma 3.8 in \cite{KS22}. 

\item There are regular DGAs for which $\mathcal{D}^{\perf}_{\cf}(K)$ is properly contained in both $\mathcal{D}^{\perf}(K)$ and $\mathcal{D}_{\cf}(K)$. As noted in \cite{Orl23}, the infinite Kronecker algebra is an example. 

\item In the language of \cite{KS22} a DGA $K$ is HFD-closed if $\mathcal{D}_{\cf}(K) = \mathcal{D}_{\cf}^{\perf}(K)$ and $\mathcal{D}_{\cf}(K^{op}) = \mathcal{D}_{\cf}^{\perf}(K^{op})$.
\end{enumerate}
\end{remark}

\begin{prop} \label{connective}
Suppose $A$ is a fd DGA concentrated in non-positive degrees. Then $A^!$ is smooth and $\mathcal{D}^{\perf}_{\cf}(A^!) = \mathcal{D}_{\cf}(A^!)$.
\end{prop}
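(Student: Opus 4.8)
The plan is to deduce the second equality from the first and to prove smoothness of $A^!$ by realising it as a quasi-free DGA on finitely many generators. The reduction is immediate: once $A^!$ is known to be smooth, Remark \ref{kronecker}(1) applied to $K=A^!$ yields $\mathcal{D}^{\perf}_{\cf}(A^!)=\mathcal{D}_{\cf}(A^!)$ at once. So the entire content is the smoothness of $A^!$, which by definition is the assertion that the diagonal bimodule $A^!$ is perfect over $(A^!)^e$.

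The natural first attempt is to read this off Lemma \ref{Koszuldual1}. Using the K\"unneth isomorphism and separability of $A/J$, Koszul duality is compatible with enveloping algebras: $A^e$ is again a fd DGA in non-positive degrees with $A^e/J(A^e)_-\simeq (A/J_-)^e$ (as in Lemma \ref{envelopesemi}), and its Koszul dual is
\[
(A^e)^!\simeq \RHom_{A^{op}}(A/J_-,A/J_-)\otimes_k\RHom_A(A/J_-,A/J_-)=(A^!)^{op}\otimes_k A^!=(A^!)^e.
\]
Lemma \ref{Koszuldual1} for $A^e$ together with Theorem \ref{radfilt} then identifies $\mathcal{D}^{\perf}((A^!)^e)\simeq\mathcal{D}_{\f}(A^e)=\thick((A/J_-)^e)$. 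This is a useful consistency check, but it does \emph{not} settle smoothness on its own: the equivalence has $\mathcal{D}^{\perf}((A^!)^e)$ as target by definition, so membership of the diagonal there is exactly what must be proved, and the naive image of the diagonal $A$-bimodule is not the diagonal $A^!$-bimodule. I would therefore instead produce an explicit perfect model of the diagonal, where the finiteness of $A$ is used directly.

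Here separability of $A/J$ enters a second time. By Wedderburn--Malcev there is a semisimple DG subalgebra $R\hookrightarrow A$ mapping isomorphically onto $A/J_+$, so that $A\simeq R\oplus\bar A$ with augmentation ideal $\bar A$ a \emph{finite dimensional} $R$-bimodule concentrated in non-positive degrees. Computing $A^!\simeq\RHom_A(A/J_+,A/J_+)$ by the reduced bar--cobar construction over $R$ presents $A^!$ as the tensor DGA $T=T_R(V)$ on the finite dimensional $R$-bimodule $V:=\bar A^\vee[-1]$ with the cobar differential; connectivity of $A$ is exactly what forces $V$ into non-negative degrees and makes the construction degreewise finite. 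For such a quasi-free $T$ the diagonal carries the tautological two-term bimodule presentation
\[
0\longrightarrow T\otimes_R V\otimes_R T\longrightarrow T\otimes_R T\longrightarrow T\longrightarrow 0,
\]
and since $R$ is separable both $T\otimes_R T$ and $T\otimes_R V\otimes_R T$ are summands of free $T^e$-modules, the latter a finite sum of shifts because $V$ is finite dimensional. Hence $T=A^!$ is perfect over $(A^!)^e$, i.e.\ $A^!$ is smooth, and the second equality follows from Remark \ref{kronecker}(1).

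The main obstacle is this middle step: converting the finite dimensionality (properness) of $A$ into a genuinely finite, quasi-free model of $A^!$. Concretely one must (i) produce the $R$-linear Wedderburn--Malcev splitting of $A$ in the DG setting and identify $A^!$ with the cobar algebra on $\bar A^\vee[-1]$, and (ii) check that the displayed sequence, once the cobar differential is switched on, is still a resolution of the diagonal by perfect $T^e$-modules; I would verify the latter by filtering by tensor length and passing to the associated graded, where the sequence degenerates to the split one above. Everything else is formal, and the role of connectivity is confined to guaranteeing degreewise finiteness so that the cobar model exists.
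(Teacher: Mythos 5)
Your reduction of the second equality to smoothness of $A^!$ via Remark \ref{kronecker}(1) is correct and matches the paper, and your instinct that the enveloping-algebra form of Koszul duality alone cannot settle smoothness is also right. The problems are in the two steps that carry all the weight. First, the Wedderburn--Malcev step: that theorem splits a quotient by a \emph{nilpotent} ideal (the radical), whereas the kernel of $A \to A/J_+$ is $J_+ = J + d(J)$, which is not nilpotent and not contained in $J$; it typically contains idempotents coming from the acyclic part of $A$, and then no unital DG section can exist. Concretely, take $A^0 = k \times M_2(k)$ and $A^{-1} = ku$ with $u$ central, $u^2 = 0$, $du = e_1 := (1,0)$. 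This is a connective fd DGA with $A/J = k\times M_2(k)$ separable, $J = ku$, $J_+ = ku \oplus ke_1$, and $A/J_+ \cong M_2(k)$. A DG algebra section of $A \to A/J_+$ would land in $A^0$ and be unital, so its first component would be a unital algebra map $M_2(k) \to k$, which is impossible since $M_2(k)$ is simple. Hence the subalgebra $R$ you need does not exist in $A$ itself (here $A \simeq M_2(k)$, so the statement survives after replacement, but that is the point): your argument requires first replacing $A$ by a quasi-isomorphic fd model in which $d(J) \subseteq J$, so that $J_+ = J$ is a nilpotent DG ideal and graded Wedderburn--Malcev applies; the existence of such a model is a nontrivial assertion that your proposal neither states nor proves.

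Second, even granting the cobar model $T = T_R(V)$, your perfectness argument conflates graded bimodules with DG bimodules. As a \emph{graded} bimodule the kernel of $T\otimes_R T \to T$ is indeed $T\otimes_R V\otimes_R T$, a summand of a finite sum of shifts of $T^e$; but its differential is not the tensor-product one: it acquires the terms $e_v \mapsto \sum \pm\, v'e_{v''} + e_{v'}v''$ dual to the multiplication of $\bar A$, and with these terms it is no longer a summand of shifts of $T^e$ in any evident way --- this is exactly where the content of the theorem sits. Your proposed fix, filtering by tensor length, cannot close this: the cobar differential raises word length, so the $d$-stable filtration it induces is a \emph{decreasing} one with infinitely many nonzero steps; this suffices to check exactness of the sequence degreewise, but perfect objects are not closed under the limits needed to reassemble an infinite filtration, so it does not place the kernel in $\thick(T^e)$. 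The filtration that does work is the finite one dual to the powers of the augmentation ideal: the subbimodules $T\otimes_R (\bar A/\bar A^{\,m})^\vee[-1]\otimes_R T$ are $d$-stable, because the quadratic part of the cobar differential is dual to $\bar A \otimes \bar A \to \bar A$, which raises the power filtration, and the subquotients carry the untwisted differential and are perfect by your separability argument. Note that this repair needs $\bar A$ nilpotent, i.e.\ precisely the good model from the first gap. With both repairs your route does go through and is genuinely different from the paper's, which avoids all of this by quoting Proposition \ref{sf=cf} together with the smoothness of $\mathcal{D}_{\cf}(A)$ for proper connective DGAs (Proposition 6.9 of \cite{KS22}) and transporting smoothness across the equivalence $\mathcal{D}^{\perf}(A^!) \simeq \mathcal{D}_{\f}(A)$; as written, however, your proof has two genuine gaps.
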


\begin{proof}
For connective DGAs $\mathcal{D}_{\f}(A) = \mathcal{D}_{\cf}(A)$ by Lemma \ref{sf=cf}. Also $\mathcal{D}_{\cf}(A)$ is smooth by Proposition 6.9 in \cite{KS22}. So $\mathcal{D}^{\perf}(A^!)$ is smooth and so $A^!$ is too. By Remark \ref{kronecker}  $\mathcal{D}_{\cf}(A^!) \subseteq \mathcal{D}^{\perf}(A^!)$. 
\end{proof}

\begin{theorem} \label{Koszuldual2}
Let $A$ be a fd DGA such that $A/J$ is $k$-separable. Then there are equivalences
\[
\begin{tikzcd}[column sep = huge]
\mathcal{D}_{\f}(A)^{op} \arrow{r}{\RHom_A(-,A/J_-)}[swap]{\sim} & \mathcal{D}^{\perf}((A^!)^{op})\\
\mathcal{D}^{\perf}(A)^{op} \arrow[u,hook] \arrow[r,"\sim"] & \mathcal{D}^{\perf}_{\cf}((A^!)^{op})  \arrow[u,hook]
\end{tikzcd}
\]
\end{theorem}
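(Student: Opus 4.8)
The plan is to read off the top equivalence directly from Lemma \ref{Koszuldual1}, and then to show that $\RHom_A(-,A/J_-)$ restricts to the claimed equivalence on perfect objects by feeding in the characterisation of $\mathcal{D}^{\perf}(A)$ supplied by Theorem \ref{descperfdual2}. Concretely, I would first extract the top arrow from the outer square of Lemma \ref{Koszuldual1}: the composite of the left vertical $(-)^\vee$ with the bottom horizontal $\RHom_A((-)^\vee,A/J_-)$ sends $M \mapsto \RHom_A(M,A/J_-)$, and by commutativity of that square this composite equals the top equivalence followed by the right vertical $\RHom_{A^!}(-,A^!)$. As a composite of equivalences it is itself an equivalence $\mathcal{D}_{\f}(A) \xrightarrow{\sim} \mathcal{D}^{\perf}((A^!)^{op})^{op}$; passing to opposite categories yields the top row $\RHom_A(-,A/J_-)\colon \mathcal{D}_{\f}(A)^{op} \xrightarrow{\sim} \mathcal{D}^{\perf}((A^!)^{op})$. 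The only delicacy here is the bookkeeping of variances and opposite categories, since $\RHom_A(-,A/J_-)$ is contravariant in its first argument.

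Next I would justify the two vertical inclusions. Since $A$ is a fd DGA it is strictly finite dimensional over itself, so $A \in \mathcal{D}_{\f}(A)$ and hence $\mathcal{D}^{\perf}(A) = \thick(A) \subseteq \mathcal{D}_{\f}(A)$; this makes the left vertical inclusion legitimate, while the right vertical inclusion $\mathcal{D}^{\perf}_{\cf}((A^!)^{op}) \hookrightarrow \mathcal{D}^{\perf}((A^!)^{op})$ holds by definition. The bottom row is then to be the restriction of the top equivalence, so it suffices to identify the essential image of $\mathcal{D}^{\perf}(A)^{op}$. For $X \in \mathcal{D}_{\f}(A)$, Theorem \ref{descperfdual2} gives that $X \in \mathcal{D}^{\perf}(A)$ if and only if $\RHom_A(X,A/J_-) \in \mathcal{D}^b(k)$, that is, if and only if $\RHom_A(X,A/J_-)$ has finite dimensional total cohomology over $k$, which is exactly the condition that its image lie in $\mathcal{D}_{\cf}((A^!)^{op})$. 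As the image already lies in $\mathcal{D}^{\perf}((A^!)^{op})$, this shows that $\RHom_A(-,A/J_-)$ carries the objects of $\mathcal{D}^{\perf}(A)^{op}$ onto precisely the objects of $\mathcal{D}^{\perf}_{\cf}((A^!)^{op})$. Since all the categories in question are full subcategories, the restriction of the fully faithful top functor remains fully faithful, and it is essentially surjective onto $\mathcal{D}^{\perf}_{\cf}((A^!)^{op})$ by this object-level identification; this is the bottom equivalence, and commutativity of the square holds by construction.

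The genuinely substantive input is Theorem \ref{descperfdual2}; everything else is a formal consequence of Lemma \ref{Koszuldual1} together with the observation that the $k$-linear cohomological finiteness condition on the target matches the perfection condition on the source under Koszul duality. Accordingly, I expect the main obstacle to be the variance and opposite-category bookkeeping in the first step — making sure the composite read off from Lemma \ref{Koszuldual1} is literally $\RHom_A(-,A/J_-)$ and lands in $\mathcal{D}^{\perf}((A^!)^{op})$ with the correct handedness — rather than any further homological work.
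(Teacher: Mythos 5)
Your proposal is correct and follows essentially the same route as the paper: the top row is the diagonal equivalence of Lemma \ref{Koszuldual1}, and the bottom row is obtained by using Theorem \ref{descperfdual2} to identify the image of $\mathcal{D}^{\perf}(A)^{op}$ as exactly the perfect $(A^!)^{op}$-modules with finite dimensional total cohomology, i.e.\ $\mathcal{D}^{\perf}_{\cf}((A^!)^{op})$. The extra bookkeeping you supply (the inclusion $\mathcal{D}^{\perf}(A) = \thick(A) \subseteq \mathcal{D}_{\f}(A)$ and the variance/opposite-category checks) is precisely what the paper's terse proof leaves implicit.
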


\begin{proof}
The top functor is the diagonal equivalence of  Lemma \ref{Koszuldual1}. Theorem 6.1 states that $X \in \mathcal{D}^{\perf}((A^!)^{op})$ is in the image of $\mathcal{D}^{\perf}(A)^{op}$ under this functor exactly when $X$ has finite dimensional cohomology.
\end{proof}

\begin{remark}
The Koszul dual $A^!$ has an augmentation from which we can recover $A$. Applying $\RHom_A(-,A/J_-)$ to $A \to A/J_-$ gives a map of left $A^!$-modules $ A/J_- \leftarrow A^!$. And $\RHom_A(-,A/J_-)$ induces an equivalence of DGAs
\[
A = \RHom_A(A,A) \xrightarrow{\sim} \RHom_{(A^!)^{op}}(A/J_-,A/J_-)^{op}.
\]
\end{remark} 

We now deduce covariant versions of the two equivalences in Theorem \ref{Koszuldual2}. In the covariant version, the square corresponding to that of Theorem \ref{Koszuldual2} need not commute.

\begin{lemma}
If $A$ is a fd DGA, then $(-)^\vee: \mathcal{D}_{\cf}((A^!)^{op}) \simeq \mathcal{D}_{\cf}(A^!)^{op}$ restricts to $\mathcal{D}_{\cf}^{\perf}((A^!)^{op}) \simeq \mathcal{D}_{\cf}^{\perf}(A^!)^{op}$.
\end{lemma}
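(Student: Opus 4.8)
Since $(-)^\vee$ squares to the identity on cohomologically finite modules, it is already an equivalence $\mathcal{D}_{\cf}((A^!)^{op}) \simeq \mathcal{D}_{\cf}(A^!)^{op}$, so to prove the lemma I only need to check that it carries perfect objects to perfect objects in both directions; cohomological finiteness of the image is automatic. The point to bear in mind is that $(-)^\vee$ does \emph{not} preserve perfection over an arbitrary DGA $B$: it sends $\mathcal{D}^{\perf}(B)$ into $\thick_{B^{op}}(B^\vee)$, which lands in $\mathcal{D}^{\perf}(B^{op})$ precisely when $B$ is Gorenstein. What rescues the statement for $B = A^!$ is the Koszul-dual description of the relevant subcategories furnished by Lemma \ref{Koszuldual1} and Theorem \ref{Koszuldual2}.

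\textbf{Forward inclusion.} Write $\Psi := \RHom_A(-,A/J_-)$ and $\Phi := \RHom_A(A/J_-,-)$ for the Koszul functors, so that Lemma \ref{Koszuldual1} gives equivalences $\Psi \colon \mathcal{D}_{\f}(A)^{op} \xrightarrow{\sim} \mathcal{D}^{\perf}((A^!)^{op})$ and $\Phi \colon \mathcal{D}_{\f}(A) \xrightarrow{\sim} \mathcal{D}^{\perf}(A^!)$. Given $M \in \mathcal{D}^{\perf}_{\cf}((A^!)^{op})$, I would use Theorem \ref{Koszuldual2} to write $M = \Psi(Y)$ with $Y \in \mathcal{D}^{\perf}(A)$; here the cohomological finiteness of $M$ is exactly what upgrades $Y$ from an object of $\mathcal{D}_{\f}(A)$ to a perfect one. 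Ext--Tor duality then rewrites $M \simeq (Y \otimes_A^{\mathbb{L}} (A/J_-)^\vee)^\vee$, so that $M^\vee \simeq Y \otimes_A^{\mathbb{L}} (A/J_-)^\vee$. The functor $- \otimes_A^{\mathbb{L}} (A/J_-)^\vee$ is triangulated and sends $A$ to $(A/J_-)^\vee$, hence carries $\mathcal{D}^{\perf}(A) = \thick_A(A)$ into $\thick_{A^!}((A/J_-)^\vee)$; everything thus reduces to showing that $(A/J_-)^\vee$ is a perfect $A^!$-module.

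\textbf{The key computation.} This step is independent of any finiteness input beyond finite dimensionality of $A$. Applying Ext--Tor duality gives $\RHom_A(A/J_-,A^\vee) \simeq (A/J_- \otimes_A^{\mathbb{L}} A)^\vee \simeq (A/J_-)^\vee$, so $(A/J_-)^\vee \simeq \Phi(A^\vee)$. As $A$ is a fd DGA its dual $A^\vee$ is strictly finite dimensional, whence $A^\vee \in \mathcal{D}_{\f}(A)$ and $\Phi(A^\vee) \in \mathcal{D}^{\perf}(A^!)$; therefore $(A/J_-)^\vee$ is perfect and $M^\vee \in \mathcal{D}^{\perf}_{\cf}(A^!)$.

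\textbf{Reverse inclusion and main obstacle.} The reverse inclusion is entirely symmetric. For $N \in \mathcal{D}^{\perf}_{\cf}(A^!)$ I write $N = \Phi(X)$ with $X \in \mathcal{D}_{\f}(A)$; Proposition \ref{dualperfdesc} reads the cohomological finiteness of $N = \RHom_A(A/J_-,X)$ as the condition $X \in \thick_A(A^\vee)$, so that $X^\vee \in \thick_{A^{op}}(A) = \mathcal{D}^{\perf}(A^{op})$. Ext--Tor duality gives $N^\vee \simeq A/J_- \otimes_A^{\mathbb{L}} X^\vee$, and the triangulated functor $A/J_- \otimes_A^{\mathbb{L}} -$ sends the generator $A$ to $A/J_- = \Psi(A)$, which is perfect over $(A^!)^{op}$ because $A \in \mathcal{D}_{\f}(A)$; hence $N^\vee$ is perfect. (Alternatively this direction is the forward one applied to $A^{op}$, using the identification $(A^!)^{op} \simeq \RHom_{A^{op}}((A/J_-)^\vee,(A/J_-)^\vee)$ from the proof of Lemma \ref{Koszuldual1}.) The main obstacle throughout is precisely the failure of $(-)^\vee$ to respect perfection in general: the argument only closes up because the two Koszul-dual simple modules $(A/J_-)^\vee$ and $A/J_-$ happen to be perfect, and because the cohomological-finiteness conditions translate, via Theorem \ref{Koszuldual2} and Proposition \ref{dualperfdesc}, into exactly the perfection needed to run the thick-subcategory arguments.
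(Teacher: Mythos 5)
Your proof is correct and is essentially the paper's own argument: the forward inclusion reduces, via Theorem \ref{Koszuldual2} and a thick-subcategory argument, to exactly the paper's key computation $(A/J_-)^\vee \simeq \RHom_A(A/J_-,A^\vee)$, which is perfect over $A^!$ because $A^\vee \in \mathcal{D}_{\f}(A)$ and $\RHom_A(A/J_-,-)$ maps $\mathcal{D}_{\f}(A)$ onto $\mathcal{D}^{\perf}(A^!)$ by Lemma \ref{Koszuldual1}. The only differences are cosmetic or additive: you package the reduction through Ext--Tor duality and the functor $-\otimes_A^{\mathbb{L}}(A/J_-)^\vee$ rather than dualising the generator $A/J_-$ of $\mathcal{D}^{\perf}_{\cf}((A^!)^{op})$ directly, and you spell out the reverse inclusion (via Proposition \ref{dualperfdesc}) that the paper leaves implicit --- noting that both your proof and the paper's rely on the $k$-separability of $A/J$ (through Theorem \ref{Koszuldual2}, respectively Proposition \ref{dualperfdesc}), even though the lemma's statement omits that hypothesis.
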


\begin{proof}
We need to show that for any $X \in \mathcal{D}_{\cf}^{\perf}((A^!)^{op})$ $X^\vee \in \mathcal{D}^{\perf}(A^!)$. Since $A$ generates $\mathcal{D}^{\perf}(A)$ we have that $\RHom_A(A,A/J_-) \simeq A/J_-$ generates $\mathcal{D}_{\cf}^{\perf}((A^!)^{op})^{op}$. Therefore it is enough to show that $A/J_-^\vee \in \mathcal{D}^{\perf}(A^!)$. Now we have $A/J_-^\vee \simeq \RHom_A(A/J_-,A^\vee)$ and so $A/J_-^\vee \in \mathcal{D}((A^!)^{op})$ lies in the image of $\mathcal{D}_{\f}(A)^{op}$ under the functor $\RHom_A(A/J_-,-)$. This is exactly $\mathcal{D}^{\perf}(A^!)$ by Lemma \ref{Koszuldual1}.
\end{proof}

\begin{remark}
The commuting square in Lemma \ref{Koszuldual1} implies that the equivalence $\RHom_{A^!}(-,A^!): \mathcal{D}^{\perf}(A^!) \simeq \mathcal{D}^{\perf}((A^!)^{op})^{op}$ restricts to $\mathcal{D}_{\cf}^{\perf}(A^!)$ if and only if $A$ is Gorenstein. 
\end{remark}

\begin{cor} \label{almostsmoothdual}
If $A$ is a fd DGA, then $\mathcal{D}_{\cf}^{\perf}((A^!)^{op}) = \mathcal{D}_{\cf}((A^!)^{op})$ if and only if $\mathcal{D}_{\cf}^{\perf}(A^!) = \mathcal{D}_{\cf}(A^!)$.
\end{cor}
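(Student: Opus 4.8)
The plan is to deduce this directly from the lemma immediately preceding it, since the $k$-dual functor already supplies a dictionary between the two sides. First I would invoke that lemma, which gives an equivalence
$(-)^\vee \colon \mathcal{D}_{\cf}((A^!)^{op}) \xrightarrow{\sim} \mathcal{D}_{\cf}(A^!)^{op}$ whose restriction is an equivalence
$\mathcal{D}_{\cf}^{\perf}((A^!)^{op}) \xrightarrow{\sim} \mathcal{D}_{\cf}^{\perf}(A^!)^{op}$. In particular $(-)^\vee$ carries the full subcategory $\mathcal{D}_{\cf}^{\perf}((A^!)^{op})$ bijectively, up to isomorphism, onto $\mathcal{D}_{\cf}^{\perf}(A^!)^{op}$, and the ambient $\mathcal{D}_{\cf}((A^!)^{op})$ onto $\mathcal{D}_{\cf}(A^!)^{op}$.

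The key observation is then purely formal: any equivalence of categories both preserves and reflects essential containments between full subcategories. Concretely, since $(-)^\vee$ is an equivalence, an object $X \in \mathcal{D}_{\cf}((A^!)^{op})$ lies in $\mathcal{D}_{\cf}^{\perf}((A^!)^{op})$ if and only if $X^\vee$ lies in $\mathcal{D}_{\cf}^{\perf}(A^!)$, and every object of $\mathcal{D}_{\cf}(A^!)$ is isomorphic to some such $X^\vee$. Hence the inclusion $\mathcal{D}_{\cf}^{\perf}((A^!)^{op}) \subseteq \mathcal{D}_{\cf}((A^!)^{op})$ is an equality precisely when $\mathcal{D}_{\cf}^{\perf}(A^!) \subseteq \mathcal{D}_{\cf}(A^!)$ is, which is exactly the claimed equivalence of the two conditions.

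There is no genuine obstacle here beyond the content already packaged into the preceding lemma; the only thing that makes the statement nontrivial is knowing that $(-)^\vee$ restricts to the perfect-and-cohomologically-finite subcategories, and that is precisely what the lemma establishes. I would therefore keep the argument to these two short steps and not belabour the bookkeeping.
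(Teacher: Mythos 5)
Your proposal is correct and is essentially the paper's own proof: the paper likewise deduces the corollary from the preceding lemma, displaying the commutative square of inclusions $\mathcal{D}^{\perf}_{\cf} \hookrightarrow \mathcal{D}_{\cf}$ on both sides with the horizontal maps the equivalences $(-)^\vee$, and concluding by the same formal transfer of the containment-is-equality condition along the equivalence. Your write-up just makes explicit the bookkeeping that the paper leaves to the diagram.
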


\begin{proof}
The result follows from the diagram where the horizontal maps are equivalences
\[
\begin{tikzcd}
\mathcal{D}_{\cf}(A^!) \arrow[r,"(-)^\vee"] & \mathcal{D}_{\cf}((A^!)^{op})^{op} \\
\mathcal{D}^{\perf}_{\cf}(A^!) \arrow[u,hook] \arrow[r,"(-)^\vee"] & \mathcal{D}_{\cf}^{\perf}((A^!)^{op})^{op} \arrow[u,hook] \\
\end{tikzcd} \qedhere
\]
\end{proof}

\begin{theorem} \label{Kosequiv}
Suppose $A$ is a fd DGA such that $A/J$ is $k$-separable. Then there are equivalences
\begin{align*}
\RHom_A(A/J_-,-)&: \mathcal{D}_{\f}(A) \xrightarrow{\sim} \mathcal{D}^{\perf}(A^!) \\
- \otimes^\mathbb{L}_A (A/J_-)^\vee &: \mathcal{D}^{\perf}(A) \xrightarrow{\sim} \mathcal{D}_{\cf}^{\perf}(A^!)
\end{align*}  
\end{theorem}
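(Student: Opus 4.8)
The plan is to handle the two equivalences separately: the first is already available, and the second will be obtained by dualising the contravariant equivalence of Theorem \ref{Koszuldual2}. The first equivalence $\RHom_A(A/J_-,-)\colon \mathcal{D}_{\f}(A) \xrightarrow{\sim} \mathcal{D}^{\perf}(A^!)$ is precisely the top horizontal arrow of the commuting diagram in Lemma \ref{Koszuldual1}, so there is nothing new to prove; one only records that the right $A^!$-module structure on $\RHom_A(A/J_-,M)$ is the post-composition action, that the generator $A/J_-$ is sent to the free module $A^! = \RHom_A(A/J_-,A/J_-)$, and that Theorem \ref{radfilt} then forces the image to be $\thick_{A^!}(A^!) = \mathcal{D}^{\perf}(A^!)$.

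For the second equivalence I would begin from the bottom row of Theorem \ref{Koszuldual2}, the equivalence $\RHom_A(-,A/J_-)\colon \mathcal{D}^{\perf}(A)^{op} \xrightarrow{\sim} \mathcal{D}^{\perf}_{\cf}((A^!)^{op})$, and postcompose it with the restricted $k$-dual $(-)^\vee\colon \mathcal{D}^{\perf}_{\cf}((A^!)^{op}) \xrightarrow{\sim} \mathcal{D}^{\perf}_{\cf}(A^!)^{op}$ proved in the Lemma immediately preceding Corollary \ref{almostsmoothdual}. Passing to opposite categories yields an equivalence $\mathcal{D}^{\perf}(A) \xrightarrow{\sim} \mathcal{D}^{\perf}_{\cf}(A^!)$ sending $M$ to $\RHom_A(M,A/J_-)^\vee$. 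It then remains to identify this composite with $-\otimes^\mathbb{L}_A (A/J_-)^\vee$. By Ext-Tor duality one has $\RHom_A(M,A/J_-) \simeq (M \otimes^\mathbb{L}_A (A/J_-)^\vee)^\vee$, whence $\RHom_A(M,A/J_-)^\vee \simeq (M \otimes^\mathbb{L}_A (A/J_-)^\vee)^{\vee\vee}$; since $M$ is perfect and $(A/J_-)^\vee \in \mathcal{D}^b(k)$, the object $M \otimes^\mathbb{L}_A (A/J_-)^\vee$ is cohomologically finite, hence reflexive, so the double dual collapses to $M \otimes^\mathbb{L}_A (A/J_-)^\vee$, as required.

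The main obstacle is bookkeeping of module structures rather than any deep homological input. I must check that these identifications are equivalences of functors valued in $\mathcal{D}(A^!)$ (right $A^!$-modules), not merely of underlying complexes over $k$. Concretely, one uses the reflexive identification $(A/J_-)^\vee \simeq \RHom_A(A/J_-,A^\vee)$, itself a consequence of Ext-Tor duality, to equip $(A/J_-)^\vee$ with its $(A,A^!)$-bimodule structure, so that $-\otimes^\mathbb{L}_A (A/J_-)^\vee$ genuinely takes values in $\mathcal{D}(A^!)$, and then verifies that the Ext-Tor duality isomorphism intertwines this structure with the post-composition $A^!$-action on $\RHom_A(-,A/J_-)$. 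Full faithfulness and essential surjectivity onto $\mathcal{D}^{\perf}_{\cf}(A^!)$ need no separate argument, being inherited from the composed equivalences supplied by Theorem \ref{Koszuldual2} and the $k$-dual.
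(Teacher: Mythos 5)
Your proposal is correct and follows essentially the same route as the paper: the first equivalence is quoted from Lemma \ref{Koszuldual1}, and the second is obtained by composing the bottom equivalence of Theorem \ref{Koszuldual2} with the restricted $k$-dual $(-)^\vee\colon \mathcal{D}^{\perf}_{\cf}((A^!)^{op}) \simeq \mathcal{D}^{\perf}_{\cf}(A^!)^{op}$ and then identifying $\RHom_A(-,A/J_-)^\vee$ with $-\otimes^\mathbb{L}_A (A/J_-)^\vee$ via Ext--Tor duality. Your explicit justification of the double-dual collapse (cohomological finiteness of $M \otimes^\mathbb{L}_A (A/J_-)^\vee$ for $M$ perfect) and of the $A^!$-module-structure bookkeeping only makes precise steps the paper leaves implicit.
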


\begin{proof}
The first equivalence is Lemma \ref{Koszuldual1}. Composing the bottom equivalence from Lemma \ref{Koszuldual2} with $(-)^\vee$ states that 
\[
\RHom_A(-,A/J_-)^\vee: \mathcal{D}^{\perf}(A) \xrightarrow{\sim} \mathcal{D}^{\perf}_{\cf}((A^!)^{op})^{op} \xrightarrow{\sim} \mathcal{D}_{\cf}^{\perf}(A^!)
\]
is an equivalence. Then we note that
\[
\RHom_A(-,A/J_-)^\vee \simeq \RHom_A(-,A/J_-^{\vee \vee})^\vee \simeq - \otimes_A^\mathbb{L} A/J_-^\vee \qedhere
\] 
\end{proof}

We can simplify the statement in the case that $A$ is connective.

\begin{cor} \label{connectivekos}
Suppose $A$ is a proper and connective DGA and $k$ is a perfect field. Then there is a smooth DGA $A^!$ such that
\[
\mathcal{D}_{\cf}(A) \simeq \mathcal{D}^{\perf}(A^!) \quad \text{ and } \quad
\mathcal{D}^{\perf}(A) \simeq \mathcal{D}_{\cf}(A^!)
\]
\end{cor}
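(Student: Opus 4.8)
The plan is to reduce to the finite dimensional connective setting and then read off the two equivalences directly from Theorem \ref{Kosequiv}, after simplifying each right-hand side using the connective hypotheses. First I would replace $A$ by a convenient model. Since $A$ is proper and connective, by \cite{RS20} it admits a model which is a fd DGA concentrated in non-positive degrees. A quasi-isomorphism of DGAs induces an equivalence $\mathcal{D}(A) \simeq \mathcal{D}(A')$ compatible with restriction of scalars to $\mathcal{D}(k)$, so it preserves both $\mathcal{D}_{\cf}$ and $\mathcal{D}^{\perf}$ (the former being defined cohomologically and the latter being the compact objects). Hence no generality is lost in assuming from the outset that $A$ is a fd DGA with $A^i = 0$ for $i > 0$.

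Next I would verify the standing separability hypothesis needed to invoke the earlier results. Because $k$ is perfect, the semisimple algebra $A/J$ is automatically $k$-separable: over a perfect field a finite dimensional semisimple algebra stays semisimple after an arbitrary extension of the base field, which is precisely the definition of $k$-separability. With this in place, both Theorem \ref{Kosequiv} and Proposition \ref{connective} apply to $A$, and Proposition \ref{connective} already furnishes the smoothness of $A^!$ asserted in the statement, together with the identity $\mathcal{D}_{\cf}^{\perf}(A^!) = \mathcal{D}_{\cf}(A^!)$.

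Finally I would assemble the two equivalences. Proposition \ref{sf=cf} gives $\mathcal{D}_{\f}(A) = \mathcal{D}_{\cf}(A)$, so the first equivalence of Theorem \ref{Kosequiv} becomes $\mathcal{D}_{\cf}(A) \simeq \mathcal{D}^{\perf}(A^!)$. Substituting $\mathcal{D}_{\cf}^{\perf}(A^!) = \mathcal{D}_{\cf}(A^!)$ into the second equivalence of Theorem \ref{Kosequiv} turns it into $\mathcal{D}^{\perf}(A) \simeq \mathcal{D}_{\cf}(A^!)$. Combined with the smoothness of $A^!$, this is exactly the claim. I expect the main obstacle to be the opening reduction: one must be certain that the finite dimensional model provided by \cite{RS20} can be chosen connective (non-positively graded), since both Proposition \ref{sf=cf} and Proposition \ref{connective} depend essentially on that grading hypothesis. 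Once the connective model is secured, the separability observation and the two substitutions are routine.
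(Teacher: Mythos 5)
Your proof is correct and follows essentially the same route as the paper: reduce via \cite{RS20} to a finite dimensional connective model, then combine Proposition \ref{connective}, Proposition \ref{sf=cf} and Theorem \ref{Kosequiv}. The only cosmetic difference is that you justify the $k$-separability of $A/J$ directly from perfectness of $k$, whereas the paper absorbs this into its citation of Corollary 3.12 of \cite{RS20}; both are fine.
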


\begin{proof}
By Corollary 3.12 in \cite{RS20}, we can assume $A$ is a fd DGA concentrated in non-positive degrees such that $A/J$ is $k$-separable. Then the result follows from Proposition \ref{connective} and Theorem \ref{Kosequiv}.
\end{proof}

\begin{ex} \label{powerseriesex}
Here we give an example of a DGA for which the Koszul dual is not smooth. Let $A = k[x]/x^2$ with $\lvert x \rvert = 1$ with zero differential. Then $A/J = k$ and $\RHom_A(k,k) = k[[t]]$ with $\lvert t \rvert = 0$ which is not smooth. Note that $\mathcal{D}_{\cf}^{\perf}(A^!) = \mathcal{D}_{\cf}(A^!)$ since all finite dimensional $k[[t]]$ modules are of the form $\bigoplus_i k[t]/t^{k_i}$ for some $k_i \in \mathbb{N}$ which are all perfect.
\end{ex}

\section{A Corepresentability Result}

We use the Koszul dual to study corepresentability of DG functors $\mathcal{D}_{\f}(A) \to \mathcal{D}^b(k)$. The derived category of a DG category $\mathcal{A}$ will be denoted $\mathcal{D}(\mathcal{A})$. Let $\mathcal{D}_{\cf}(\mathcal{A}) \subseteq \mathcal{D}(\mathcal{A})$ consist of those $\mathcal{A}$-modules $M$ such that $H^\ast M(a)$ is totally finite dimensional for all $a \in \mathcal{A}$. See \cite{Kel06} for an introduction to DG categories. The following fact is well known; we include a proof for completeness.

\begin{lemma} \label{moritaequiv}
If $\mathcal{A}$ is a DG category, the restriction along the Yoneda embedding is an equivalence $\mathcal{D}_{\cf}(\mathcal{D}^{\perf}(\mathcal{A})) \xrightarrow{\sim} \mathcal{D}_{\cf}(\mathcal{A})$. 
\end{lemma}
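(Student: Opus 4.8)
The plan is to deduce this from the unrestricted derived Morita equivalence and then match up the two cohomologically finite subcategories by a thick-subcategory argument. First I would recall the derived Morita theorem of Keller (see \cite{Kel06}): writing $Y \colon \mathcal{A} \to \mathcal{D}^{\perf}(\mathcal{A})$ for the Yoneda embedding into the natural DG enhancement of the perfect derived category, restriction of modules along $Y$ induces a triangulated equivalence
\[
\Res_Y \colon \mathcal{D}(\mathcal{D}^{\perf}(\mathcal{A})) \xrightarrow{\sim} \mathcal{D}(\mathcal{A}).
\]
This rests on the fact that $\mathcal{D}^{\perf}(\mathcal{A})$ is pretriangulated and is generated as a thick subcategory by the image of $Y$. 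It therefore suffices to show that under this equivalence the subcategory $\mathcal{D}_{\cf}(\mathcal{D}^{\perf}(\mathcal{A}))$ corresponds exactly to $\mathcal{D}_{\cf}(\mathcal{A})$; full faithfulness is then inherited, and essential surjectivity onto $\mathcal{D}_{\cf}(\mathcal{A})$ will follow once we know that $\Res_Y$ both preserves and reflects cohomological finiteness.

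The easy direction uses the evaluation formula. For $M \in \mathcal{D}(\mathcal{D}^{\perf}(\mathcal{A}))$ and $a \in \mathcal{A}$ one has $(\Res_Y M)(a) \simeq M(Y(a))$, so that $H^\ast(\Res_Y M)(a) = H^\ast M(Y(a))$. Since each representable $Y(a)$ is an object of $\mathcal{D}^{\perf}(\mathcal{A})$, if $M$ is cohomologically finite then so is its restriction, giving $\Res_Y(\mathcal{D}_{\cf}(\mathcal{D}^{\perf}(\mathcal{A}))) \subseteq \mathcal{D}_{\cf}(\mathcal{A})$. The substance of the lemma is the reverse implication. Given $M$ whose restriction is cohomologically finite, I would consider
\[
\mathcal{S} := \{\, X \in \mathcal{D}^{\perf}(\mathcal{A}) \mid H^\ast M(X) \text{ is totally finite dimensional} \,\}.
\]
Evaluation $X \mapsto M(X)$ is a cohomological functor $\mathcal{D}^{\perf}(\mathcal{A}) \to \mathcal{D}(k)$: it carries distinguished triangles to long exact sequences in cohomology, commutes with shifts, and respects finite direct sums and direct summands. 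As total finite dimensionality is stable under shifts, finite sums, retracts, and two-out-of-three along a long exact sequence, $\mathcal{S}$ is a thick subcategory. By hypothesis $H^\ast M(Y(a))$ is totally finite dimensional for every $a$, so $\mathcal{S}$ contains all representables; since $\mathcal{D}^{\perf}(\mathcal{A}) = \thick(Y(\mathcal{A}))$ this forces $\mathcal{S} = \mathcal{D}^{\perf}(\mathcal{A})$, i.e.\ $M \in \mathcal{D}_{\cf}(\mathcal{D}^{\perf}(\mathcal{A}))$. Combining the two directions, $\Res_Y$ identifies the cohomologically finite subcategories, which proves the claim.

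I expect the main obstacle to be the cohomological-functor step in the reflection argument, specifically making rigorous that the strict evaluation of a DG module over $\mathcal{D}^{\perf}(\mathcal{A})$ descends to an exact functor on the homotopy category, so that $\mathcal{S}$ is genuinely thick and the generation statement $\mathcal{D}^{\perf}(\mathcal{A}) = \thick(Y(\mathcal{A}))$ can be applied. Once the global Morita equivalence is invoked and the evaluation is recognised as exact, the remainder is formal bookkeeping with finite dimensionality.
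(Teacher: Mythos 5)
Your proof is correct and follows essentially the same route as the paper: both deduce the statement from the global Morita equivalence $\mathcal{D}(\mathcal{D}^{\perf}(\mathcal{A})) \simeq \mathcal{D}(\mathcal{A})$ given by restriction along Yoneda, and then match up the cohomologically finite subcategories. Your thick-subcategory argument for reflection of cohomological finiteness is a careful write-up of exactly the step the paper dismisses with ``it clearly restricts to cohomologically finite modules.''
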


\begin{proof}
The Yoneda embedding produces a fully-faithful functor 
\[
\mathcal{D}^{\perf}(\mathcal{A}) \hookrightarrow \mathcal{D}(\mathcal{D}^{\perf}(\mathcal{A}))
\]
and since $\mathcal{D}^{\perf}(\mathcal{A})$ is idempotent complete and triangulated its image is $\mathcal{D}^{\perf}(\mathcal{D}^{\perf}(\mathcal{A}))$. The inverse functor is given by restriction along the Yoneda embedding $\mathcal{A} \hookrightarrow \mathcal{D}^{\perf}(\mathcal{A})$. It follows that the coproduct preserving restriction functor 
\[
\mathcal{D}(\mathcal{D}^{\perf}(\mathcal{A})) \to \mathcal{D}(\mathcal{A})
\]
is an equivalence, since it restricts to an equivalence on compact generators. It clearly restricts to cohomologically finite modules. 
\end{proof}

The contravariant Yoneda embedding $Y$ for $\mathcal{D}_{\f}(A)$ restricts as
\[
\begin{tikzcd}
\mathcal{D}_{\f}(A)^{op} \arrow[r,hook,"Y"] & \mathcal{D}(\mathcal{D}_{\f}(A)^{op}) \\
\mathcal{D}^{\perf}(A) \arrow[u,hook] \arrow[r,hook] & \mathcal{D}_{\cf}(\mathcal{D}_{\f}(A)^{op}) \arrow[u,hook] 
\end{tikzcd}
\]
since if $M \in \mathcal{D}^{\perf}(A)$ then $\RHom(M,N) \in \mathcal{D}^b(k)$ for all $N \in \mathcal{D}_{\f}(A)$. 

\begin{theorem} \label{corepresent}
Let $A$ be a fd DGA such that $A/J$ is $k$-separable. Then
\[ \tag{$\ast$} \label{evaluation}
\mathcal{D}^{\perf}(A)^{op} \hookrightarrow \mathcal{D}_{\cf}(\mathcal{D}_{\f}(A)^{op})
\]
is an equivalence if and only if $\mathcal{D}_{\cf}(A^!) = \mathcal{D}_{\cf}^{\perf}(A^!)$. 
\end{theorem}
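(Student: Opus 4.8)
The plan is to transport the entire statement across Koszul duality, so that the abstract embedding (\ref{evaluation}) becomes a concrete inclusion of module categories over $A^!$, whereupon Corollary \ref{almostsmoothdual} finishes the argument. Write $B := (A^!)^{op}$. The top row of Theorem \ref{Koszuldual2} supplies an equivalence $\mathcal{D}_{\f}(A)^{op} \simeq \mathcal{D}^{\perf}(B)$ via $\RHom_A(-,A/J_-)$, and under this same functor the bottom row identifies the subcategory $\mathcal{D}^{\perf}(A)^{op} \subseteq \mathcal{D}_{\f}(A)^{op}$ with $\mathcal{D}^{\perf}_{\cf}(B) \subseteq \mathcal{D}^{\perf}(B)$. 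Thus the left-hand inclusion of (\ref{evaluation}) is, on the Koszul dual side, just the inclusion $\mathcal{D}^{\perf}_{\cf}(B) \hookrightarrow \mathcal{D}^{\perf}(B)$.

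First I would compute the target. Applying the equivalence $\mathcal{D}_{\f}(A)^{op} \simeq \mathcal{D}^{\perf}(B)$ gives $\mathcal{D}_{\cf}(\mathcal{D}_{\f}(A)^{op}) \simeq \mathcal{D}_{\cf}(\mathcal{D}^{\perf}(B))$, and Lemma \ref{moritaequiv}, applied to the one-object DG category $B$, identifies the latter with $\mathcal{D}_{\cf}(B)$ by restriction along the Yoneda embedding $B \hookrightarrow \mathcal{D}^{\perf}(B)$. Hence $\mathcal{D}_{\cf}(\mathcal{D}_{\f}(A)^{op}) \simeq \mathcal{D}_{\cf}((A^!)^{op})$. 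Next I would track the embedding (\ref{evaluation}) itself: the restricted contravariant Yoneda functor sends $M \in \mathcal{D}^{\perf}(A)^{op}$ to its representable module on $\mathcal{D}_{\f}(A)^{op}$, which under $\mathcal{D}_{\f}(A)^{op} \simeq \mathcal{D}^{\perf}(B)$ is the representable module $Y(X_M)$ for the corresponding object $X_M \in \mathcal{D}^{\perf}_{\cf}(B)$. The essential point is that restricting $Y(X_M)$ along $B \hookrightarrow \mathcal{D}^{\perf}(B)$ returns $X_M$ viewed in $\mathcal{D}(B)$; equivalently, the composite of the full Yoneda embedding $\mathcal{D}^{\perf}(B) \hookrightarrow \mathcal{D}(\mathcal{D}^{\perf}(B))$ with the restriction functor of Lemma \ref{moritaequiv} is the tautological inclusion $\mathcal{D}^{\perf}(B) \hookrightarrow \mathcal{D}(B)$. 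Consequently (\ref{evaluation}) is carried to the plain inclusion $\mathcal{D}^{\perf}_{\cf}((A^!)^{op}) \hookrightarrow \mathcal{D}_{\cf}((A^!)^{op})$, which is an equivalence exactly when the two categories coincide. By Corollary \ref{almostsmoothdual} this equality holds if and only if $\mathcal{D}^{\perf}_{\cf}(A^!) = \mathcal{D}_{\cf}(A^!)$, which is the claim.

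The main obstacle is the compatibility check in the previous paragraph: one must verify that the three equivalences in play—the Koszul duality of Theorem \ref{Koszuldual2}, the transport of $\mathcal{D}_{\cf}$ along it, and the Morita reduction of Lemma \ref{moritaequiv}—fit together so that the restricted contravariant Yoneda embedding really becomes the bare inclusion of $\mathcal{D}^{\perf}_{\cf}$ into $\mathcal{D}_{\cf}$, rather than some twisted variant. The crux is the observation, implicit in the proof of Lemma \ref{moritaequiv}, that restriction along $B \hookrightarrow \mathcal{D}^{\perf}(B)$ is inverse to the Yoneda embedding on perfect modules and therefore fixes objects up to the natural inclusion; once this is pinned down, everything else is a matter of assembling the already-established equivalences.
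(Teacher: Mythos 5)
Your proposal is correct and follows essentially the same route as the paper: transport along the Koszul duality equivalence of Theorem \ref{Koszuldual2}, reduce $\mathcal{D}_{\cf}(\mathcal{D}^{\perf}((A^!)^{op}))$ to $\mathcal{D}_{\cf}((A^!)^{op})$ via Lemma \ref{moritaequiv}, check that the restricted Yoneda embedding becomes the plain inclusion $\mathcal{D}^{\perf}_{\cf}((A^!)^{op}) \hookrightarrow \mathcal{D}_{\cf}((A^!)^{op})$, and finish with Corollary \ref{almostsmoothdual}. The compatibility you flag as the crux is exactly the computation the paper carries out when tracking $M \mapsto \RHom_A(M,-) \mapsto \RHom_A(M,A/J_-)$.
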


\begin{proof}
We have an equivalence 
\[
\mathcal{D}_{\cf}(\mathcal{D}_{\f}(A)^{op}) \simeq \mathcal{D}_{\cf}(\mathcal{D}^{\perf}((A^!)^{op})) \simeq \mathcal{D}_{\cf}((A^!)^{op})
\]
where the first is induced by the equivalence
\[
\RHom(-,A/J_-): \mathcal{D}_{\f}(A)^{op} \xrightarrow{\sim} \mathcal{D}^{\perf}((A^!)^{op})
\]
and the second is Lemma \ref{moritaequiv}. The long composition
\[
\mathcal{D}^{\perf}(A)^{op} \hookrightarrow \mathcal{D}_{\cf}(\mathcal{D}_{\f}(A)^{op}) \simeq \mathcal{D}_{\cf}((A^!)^{op}
\]
is essentially surjective if and only if the first map is. The long composition sends
\begin{align*}
M & \mapsto \RHom_A(M,-) \\ 
 & \simeq \RHom_{(A^!)}(\RHom_A(-,A/J_-),\RHom(M,A/J_-)) \in \mathcal{D}_{\cf}(\mathcal{D}_{\f}(A)^{op})\\
& \mapsto \RHom_{(A^!)^{op}}(-,\RHom_A(M,A/J_-)) \in \mathcal{D}_{\cf}(\mathcal{D}^{\perf}((A^!)^{op})) \\
& \mapsto \RHom_A(M,A/J_-) \in \mathcal{D}_{\cf}((A^!)^{op})
\end{align*}
where the first equivalence follows since $\RHom_A(-,A/J_-)$ is fully faithful on $\mathcal{D}_{\f}(A)$. Therefore, (\ref{evaluation}) is essentially surjective if and only if 
\[
\RHom_A(-,A/J_-): \mathcal{D}^{\perf}(A)^{op} \to \mathcal{D}_{\cf}((A^!)^{op})
\]
is essentially surjective. By Theorem \ref{Koszuldual2}, its image is $\mathcal{D}_{\cf}^{\perf}((A^!)^{op})$. By Proposition \ref{almostsmoothdual}, $\mathcal{D}_{\cf}^{\perf}((A^!)^{op}) = \mathcal{D}_{\cf}((A^!)^{op})$ if and only if $\mathcal{D}_{\cf}^{\perf}(A^!) = \mathcal{D}_{\cf}(A^!)$.
\end{proof}

\begin{remark}
This theorem is related to reflexivity as defined in \cite{KS22} and will be investigated in forthcoming work. In this language, $\mathcal{D}_{\cf}(A^!) = \mathcal{D}^{\perf}_{\cf}(A^!)$ if and only if $A^!$ is HFD-closed (using Proposition \ref{almostsmoothdual}). If $A$ is concentrated in non-positive degrees, the condition on the Koszul dual is satisfied by Proposition \ref{connective}. The author is unaware of a fd DGA $A$ for which  $\mathcal{D}^{\perf}_{\cf}(A^!) \neq \mathcal{D}_{\cf}(A^!)$. 
\end{remark}

\appendix

\section{Separability of Graded Constructions}

We prove the following lemma which is used in Proposition \ref{envelopesemi}.

\begin{lemma} \label{appendixlemma}
If $A$ is a graded algebra whose underlying algebra is $k$-separable then the underlying algebra of the graded enveloping algebra $A^e$ is semisimple. 
\end{lemma}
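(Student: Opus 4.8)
The plan is to realise the graded enveloping algebra as a \emph{cocycle twist} of the ordinary (ungraded) enveloping algebra and then transport semisimplicity across the twist. First I would dispose of the case $\operatorname{char} k = 2$: there all Koszul signs are trivial, so the graded tensor product coincides with the ordinary one and $A^e = \bar A^{op}\otimes_k \bar A$, where $\bar A$ denotes the underlying algebra. Since $\bar A$ is $k$-separable, so are $\bar A^{op}$ and $\bar A^{op}\otimes_k \bar A$ (separable algebras are closed under opposites and tensor products), and separable algebras are semisimple. So from now on assume $\operatorname{char} k \neq 2$.

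Next I would set up the twist. Write $B := \bar A^{op}\otimes_k \bar A$ for the ordinary enveloping algebra and grade the common underlying space $\bar A \otimes_k \bar A$ by $(\mathbb{Z}/2)^2$, placing a homogeneous $x\otimes y$ in degree $(\overline{|x|},\overline{|y|})$ (parities). A direct bookkeeping of the Koszul sign gives the product in $A^e$ as $(x\otimes y)(x'\otimes y') = (-1)^{(|x|+|y|)\,|x'|}\,x'x\otimes yy'$, that is, the product of $B$ multiplied by $\sigma\big((a_1,a_2),(b_1,b_2)\big) := (-1)^{(a_1+a_2)b_1}$ on homogeneous components of degrees $(a_1,a_2)$ and $(b_1,b_2)$. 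One checks $\sigma$ is a normalized $\{\pm1\}$-valued $2$-cocycle on $(\mathbb{Z}/2)^2$, so $A^e = B^{\sigma}$ is the cocycle twist of $B$; and since $\bar A$ is $k$-separable, $B$ is separable, hence semisimple with $\operatorname{rad}(B)=0$.

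Finally I would show the twist preserves semisimplicity. The twist induces an isomorphism $B\text{-grMod} \simeq B^{\sigma}\text{-grMod}$ of categories of $(\mathbb{Z}/2)^2$-graded modules, given by $r\cdot_\sigma m = \sigma(g,h)rm$; it is the identity on underlying graded spaces, so it preserves lattices of graded submodules and annihilators of graded-simple modules, whence $\operatorname{rad}^{\mathrm{gr}}(B) = \operatorname{rad}^{\mathrm{gr}}(B^{\sigma})$ as subspaces. It remains to bridge graded and ungraded semisimplicity, and this is the crux. Because $|(\mathbb{Z}/2)^2|=4$ is invertible and $-1\in k$, the character group $\widehat{(\mathbb{Z}/2)^2}$ acts by algebra automorphisms on any $(\mathbb{Z}/2)^2$-graded finite-dimensional $k$-algebra $R$ via $\chi\cdot r = \chi(g)r$ for $r$ of degree $g$; the Jacobson radical is automorphism-invariant, hence stable under this action, and the averaging idempotents $\tfrac14\sum_\chi \chi(g)^{-1}\chi$ then force it to be a graded ideal, so $\operatorname{rad}(R) = \operatorname{rad}^{\mathrm{gr}}(R)$. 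Applying this to $R=B$ and $R=B^{\sigma}$ yields $\operatorname{rad}(B)=0 \Rightarrow \operatorname{rad}^{\mathrm{gr}}(B)=0 \Rightarrow \operatorname{rad}^{\mathrm{gr}}(B^{\sigma})=0 \Rightarrow \operatorname{rad}(B^{\sigma})=0$, i.e. $A^e$ is semisimple. The main obstacle is precisely this graded-versus-ungraded comparison: the cocycle $\sigma$ is not a coboundary, as its commutator pairing $(-1)^{a_1 b_2 + a_2 b_1}$ is nondegenerate, so $B^{\sigma}\not\cong B$ and one cannot untwist directly by rescaling homogeneous components; one must instead argue through the radical and the dual-group action, which is exactly where $\operatorname{char} k\neq 2$ is used.
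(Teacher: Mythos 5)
Your proof is correct, but it follows a genuinely different route from the paper's. The paper proves the stronger Proposition \ref{appprop}: the underlying algebras of the graded opposite and of the graded tensor product are themselves $k$-separable, by explicitly constructing separability idempotents --- the key observation being that a separability idempotent of a graded algebra can be chosen with homogeneous components satisfying $\lvert a_i \rvert = -\lvert a_i' \rvert$, after which the sign-twisted element $\sum_{i,j} (-1)^{\lvert b_j \rvert \lvert a_i' \rvert} a_i \otimes^G b_j \otimes a_i' \otimes^G b_j'$ is checked to be a separability idempotent by Koszul-sign bookkeeping; Lemma \ref{appendixlemma} then follows since separable $k$-algebras are semisimple. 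You instead fold all the Koszul signs into a single bimultiplicative (hence $2$-cocycle) sign $\sigma$ on $(\mathbb{Z}/2)^2$, identify $A^e$ with the twist $B^\sigma$ of the ordinary enveloping algebra $B$, and transport $\rad(B)=0$ across the twist using the character action and averaging. This trades computation for structure theory, and the trade has costs: your argument yields only semisimplicity (exactly what the lemma asserts, but weaker than the paper's separability, which is stable under base field extension), it needs characteristic $2$ split off, and it quietly invokes standard graded facts that should be made explicit --- that separable $k$-algebras are finite dimensional (so all the radicals in play are nilpotent), that the graded radical of a finite dimensional graded algebra is nilpotent and hence contained in $\rad$, and that a nilpotent graded ideal annihilates every graded simple and hence is contained in the graded radical. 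These two inclusions are precisely what upgrade your statement that $\rad$ is a graded ideal to the equality $\rad = \rad^{\mathrm{gr}}$, a step your ``so'' elides at the crux of the argument. One side remark is also overstated: a non-coboundary cocycle does not imply $B^\sigma \not\cong B$ (if $A$ happens to be concentrated in even degrees, the twist acts trivially on $B$); the correct point, and the only one your proof uses, is that one cannot untwist by rescaling homogeneous components, which is why the comparison must pass through the radical rather than through an isomorphism. None of these caveats affects the validity of your approach; they are gaps of exposition, not of substance.
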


Lemma \ref{appendixlemma} follows immediately from Proposition \ref{appprop}. The only difference to the usual proof is the existence of signs.

\begin{prop}\label{appprop}
Let $A$ and $B$ be graded $k$-algebras whose underlying algebras are $k$-separable. Then the underlying algebra of the graded opposite algebra is $k$-separable. Also the underlying algebra of the graded tensor product is $k$-separable. 
\end{prop}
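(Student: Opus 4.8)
The plan is to reduce both assertions to the existence of separability idempotents and to transport these across the two constructions, keeping careful track of the Koszul signs that distinguish the graded operations from their ungraded analogues. Recall that an (ungraded) $k$-algebra $R$ is $k$-separable exactly when there is an element $e = \sum_i u_i \otimes v_i \in R \otimes_k R$ with $\sum_i u_i v_i = 1$ and $\sum_i r u_i \otimes v_i = \sum_i u_i \otimes v_i r$ for all $r \in R$. When $\chr k = 2$ every sign is trivial and the two claims collapse to the classical facts that separability is preserved under taking opposites and tensor products, so the real content lies in the remaining characteristics.

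For the graded opposite I would avoid idempotents entirely and instead exhibit an isomorphism of underlying (ungraded) algebras between $A^{op}$ and the ordinary opposite of the underlying algebra of $A$. Concretely, the $k$-linear map sending a homogeneous element $a$ to $(-1)^{\binom{|a|}{2}} a$ is such an isomorphism, the point being that $\binom{|a|+|b|}{2} \equiv \binom{|a|}{2} + \binom{|b|}{2} + |a|\,|b| \pmod 2$ is precisely the identity that converts the ungraded reversal $ba$ into the signed reversal $(-1)^{|a|\,|b|} ba$. Since the ordinary opposite of a separable algebra is again separable (swap the two tensor factors of its separability idempotent), transporting along this isomorphism shows the underlying algebra of $A^{op}$ is $k$-separable.

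For the graded tensor product I would argue directly with idempotents. First I note that one may choose separability idempotents $e_A = \sum_i p_i \otimes q_i$ and $e_B = \sum_j r_j \otimes s_j$ to be homogeneous of total degree $0$: the degree-$0$ component of any separability idempotent is again one, because the multiplication map preserves total degree and the centrality relation is homogeneous. Thus $|q_i| = -|p_i|$ and $|s_j| = -|r_j|$, and I would then propose the signed element
\[
e_C \;=\; \sum_{i,j} (-1)^{|r_j|\,|q_i|}\,(p_i \otimes r_j)\otimes(q_i \otimes s_j),
\]
for which the counit condition $\sum m(e_C)=1$ is immediate, the inserted sign cancelling exactly the Koszul sign thrown off by the multiplication of the graded tensor product.

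The main obstacle is the centrality of $e_C$. It suffices to verify it on the homogeneous generators $a \otimes 1$ and $1 \otimes b$, since centrality is multiplicative and these generate. Because the coupling sign $(-1)^{|r_j|\,|q_i|}$ depends on the internal degree $|q_i|$, one cannot apply the centrality of $e_A$ term by term; instead I would pass to its bidegree-homogeneous refinement, which for homogeneous $a$ of degree $p$ reads $\sum_{|p_i|=-e} a p_i \otimes q_i = \sum_{|p_i|=p-e} p_i \otimes q_i a$ for every $e$. Reindexing this refinement against the degree-dependent sign yields the twisted centrality identity $\sum_i (-1)^{q|p_i|} a p_i \otimes q_i = (-1)^{qp}\sum_i (-1)^{q|p_i|} p_i \otimes q_i a$, where the prefactor $(-1)^{qp}$ (with $q = |r_j|$) emerges precisely from the degree shift in the reindexing. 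That prefactor is exactly the extra Koszul sign produced when $a \otimes 1$ is commuted past the $B$-part of $e_C$, so the two sides match; the case $1 \otimes b$ is symmetric. I expect this bookkeeping in the twisted centrality identity to be the only genuine difficulty, everything else being the classical argument, and once it is in place $e_C$ is a separability idempotent for the underlying algebra of the graded tensor product, completing the proof.
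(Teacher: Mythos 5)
Your proof is correct, and it takes a genuinely different route in two respects. For the graded opposite, the paper reuses its idempotent argument in simpler form (a signed swap of the two factors of the separability idempotent), whereas you bypass idempotents entirely by exhibiting the isomorphism $a \mapsto (-1)^{\binom{|a|}{2}}a$ from the underlying algebra of the graded opposite to the ordinary opposite of the underlying algebra; this is cleaner, proves the stronger statement that the two opposites are isomorphic as ungraded algebras, and makes visible that the $\mathbb{Z}$-grading is essential (the sign $(-1)^{\binom{n}{2}}$ is not well defined modulo $2$, so this shortcut is unavailable for $\mathbb{Z}/2$-graded algebras, while an idempotent computation is grading-agnostic). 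For the graded tensor product you construct exactly the paper's idempotent, with the same total-degree-zero normalisation and the same coupling sign; the difference is in verifying centrality. The paper checks $xp=px$ for an arbitrary homogeneous $x \otimes^G y$ by transporting the element across explicit factor-shuffling algebra isomorphisms such as $A \otimes^G B \otimes_k A \otimes^G B \cong B \otimes^G A \otimes_k A \otimes^G B$, applying the ungraded centrality of $p^A$ where the two $A$-components sit adjacent, and shuffling back. You instead reduce to the generators $a \otimes 1$ and $1 \otimes b$ (valid, since the centraliser of $e_C$ is a subalgebra) and prove the bidegree-refined twisted centrality identity $\sum_i (-1)^{q|p_i|}a p_i \otimes q_i = (-1)^{qp}\sum_i (-1)^{q|p_i|} p_i \otimes q_i a$, which does follow from splitting the ordinary centrality relation into bidegree components and reindexing; I checked that it absorbs exactly the Koszul signs in both generator cases, including the slightly asymmetric $1 \otimes b$ case. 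In effect you localise all sign bookkeeping into one lemma about the idempotent of $A$ (respectively $B$) alone, where the paper distributes it across ring isomorphisms; both arguments are complete.
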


\begin{proof}
We will prove the second statement. The first uses a simpler version of the same argument. Let $- \otimes_k -$ denote the ungraded tensor product and $- \otimes^G -$ underlying algebra of the graded tensor product of graded algebras. Let $p^A = \sum a_i \otimes a_i' \in A \otimes_k A$ and $p^B = \sum b_j \otimes b_j' \in B \otimes_k B$ be the separability idempotents of $A$ and $B$.  Since the multiplication is graded and $\mu_A(p^A) = 1$ we have that  $p^A \in \bigoplus A^n \otimes_k A^{-n}$. Hence we can assume that each $a_i$ is homogenous and $\lvert a_i \rvert = - \lvert a_i' \rvert$ and similarly for $b_j$. We claim that
\[
p := \sum_{i,j} (-1)^{\lvert b_j \rvert \lvert a_i'\rvert} a_i \otimes^G b_j \otimes a_i' \otimes^G b_j' \in (A \otimes^G B) \otimes_k (A \otimes^G B)
\]
is a separability idempotent of $A \otimes^G B$. Indeed $\mu_{A \otimes^G B}(p)$ equals
\[
\sum_{i,j} (-1)^{\lvert b_j \rvert \lvert a_i'\rvert + \lvert b_j \rvert \lvert a_i'\rvert} a_ia_i' \otimes^G b_j b_j'   = \left(\sum_i a_ia_i'\right) \otimes^G \left(\sum_j b_jb_j'\right) = 1
\]

Now let $x \otimes^G y \in A \otimes^G B$ such that $x$ and $y$ are homogenous. Then
\[
(x \otimes y)p = \sum_{i,j} (-1)^{\lvert b_j \rvert \lvert a_i'\rvert + \lvert a_i \rvert \lvert y\rvert} xa_i \otimes^G yb_j \otimes a_i' \otimes^G b_j'.
\]
We use the algebra isomorphism $A \otimes^G B \otimes_k A \otimes^G B \simeq B \otimes^G A \otimes_k A \otimes^G B$
\[
a \otimes^G b \otimes a' \otimes^G b' \mapsto (-1)^{\lvert a \rvert \lvert b \rvert} b \otimes^G a \otimes a' \otimes b',
\]
under which $(x \otimes y)p$ maps to
\begin{align*}
& \sum_{i,j} (-1)^{\lvert b_j \rvert \lvert a_i'\rvert + \lvert a_i \rvert \lvert y\rvert + (\lvert x \rvert + \lvert a_i \rvert)(\lvert y \rvert + \lvert b_j \rvert)} yb_j \otimes^G xa_i \otimes a_i' \otimes^G b_j' \\
&= \sum_{j} (-1)^{\lvert x \rvert \lvert y \rvert + \lvert x \rvert \lvert b_j \rvert} yb_j \otimes^G x \left( \sum_i  a_i \otimes a_i' \right) \otimes^G b_j' \\
&= \sum_{j} (-1)^{\lvert x \rvert \lvert y \rvert + \lvert x \rvert \lvert b_j \rvert} yb_j \otimes^G \left( \sum_i  a_i \otimes a_i' \right) x \otimes^G b_j' 
\end{align*}
where we have used the fact that $\lvert a_i \rvert = - \lvert a_i'\rvert$. Now  pass back along the isomorphism to see that $(x \otimes y)p$ equals
\[ \label{eqgrad} \tag{$\ast$}
\sum_{i,j} (-1)^{\lvert x \rvert \lvert y \rvert + \lvert x \rvert \lvert b_j \rvert + \lvert y \rvert \lvert a_i \rvert + \lvert b_j \rvert \lvert a_i \rvert} a_i \otimes^G yb_j \otimes a_i'x \otimes^G b_j'.
\]
We now repeat the same trick with the isomorphism 
\begin{align*}
A \otimes^G B \otimes_k \otimes A \otimes^G B \simeq A \otimes^G B \otimes_k \otimes B \otimes^G A; \\
a \otimes^G b \otimes a' \otimes^G b' \mapsto (-1)^{\lvert a' \rvert \lvert b' \rvert} a \otimes^G b \otimes b' \otimes^G a'
\end{align*}

To deduce that (\ref{eqgrad}) is equal to 
\[
\sum_{i,j} (-1)^{ \lvert b_j' \rvert \lvert a_i \rvert} a_i \otimes^G b_j \otimes (a_i' \otimes^G b_j')(x \otimes^G y) = p (x \otimes^G y). \qedhere
\]
\end{proof}

\bibliographystyle{alpha}
\bibliography{biblio}

\end{document}